\documentclass[11pt, a4paper]{article}

\usepackage[dvipsnames]{xcolor}
\usepackage{hyperref}

\usepackage[margin=2.5cm]{geometry}
\usepackage{mathtools}
\usepackage{amssymb}
\usepackage{amsthm}
\usepackage{amsmath}
\usepackage{mathrsfs}
\usepackage{empheq}

\usepackage{bbold}

\usepackage{array,multirow,blkarray}

\usepackage[toc,page]{appendix}

\usepackage[nodayofweek]{datetime}

\usepackage{textcomp}

\usepackage{thmtools} 
\usepackage{thm-restate}

\usepackage{bbm}
\usepackage[]{dsfont}
\usepackage{authblk}
\usepackage{setspace}
\usepackage[shortlabels]{enumitem}
\usepackage[
   disable,
colorinlistoftodos, shadow, linecolor=white, backgroundcolor=white]{todonotes}

\usepackage{upgreek}

\newcommand{\circuits}{\mathcal{C}}
\newcommand{\numit}{p}

\usepackage[linesnumbered,commentsnumbered,ruled,vlined]{algorithm2e}

\SetKw{run}{run}

\usepackage{bm}

\newcommand{\aug}{\operatorname{aug}}

\newcommand{\pr}[2]{\left\langle #1, #2 \right\rangle}
\newcommand{\supp}{\mathrm{supp}}
\newcommand{\1}{\mathbb{1}}
\newcommand{\0}{\mathbb{0}}
\newcommand{\rk}{\operatorname{rk}}

\newcommand{\set}[1]{\left\{ #1 \right\}}

\newcommand{\OPT}{\mathrm{OPT}}

\newcommand{\EE}{\mathcal E}

\newcommand{\poly}{\operatorname{poly}}

\usepackage{accents}

\newcounter{oraclecf}
\newcounter{algDe orithm saved}

\makeatletter
 
\makeatother

\makeatletter
\let\original@algocf@latexcaption\algocf@latexcaption
\long\def\algocf@latexcaption#1[#2]{%
  \@ifundefined{NR@gettitle}{%
    \def\@currentlabelname{#2}%
  }{%
    \NR@gettitle{#2}%
  }%
  \original@algocf@latexcaption{#1}[{#2}]%
}
\makeatother

\usepackage{cleveref}

\usepackage[noadjust]{cite}
\usepackage{booktabs}

\usepackage{algorithmic}

\newcommand{\LP}{\operatorname{LP}}

\newcommand{\im}{\operatorname{Im}}

\newcommand\myshade{100}
\colorlet{mylinkcolor}{NavyBlue}
\colorlet{mycitecolor}{YellowOrange}
\colorlet{myurlcolor}{Aquamarine}

\hypersetup{
  linkcolor  = mylinkcolor!\myshade!black,
  citecolor  = mycitecolor!\myshade!black,
  urlcolor   = myurlcolor!\myshade!black,
  colorlinks = true,
}

\newtheorem{theorem}{Theorem}[section]
\newtheorem{lemma}[theorem]{Lemma}

\newtheorem{corollary}[theorem]{Corollary}

\newtheorem{claim}[theorem]{Claim}

\theoremstyle{definition}
\newtheorem{definition}[theorem]{Definition}

\newtheorem{remark}[theorem]{Remark}

\newtheoremstyle{case}{}{}{\itshape}{1em}{}{:}{ }{}
\theoremstyle{case}

\definecolor{darkred}{RGB}{180, 0, 0}
\definecolor{darkgreen}{rgb}{0, 0.6, 0}
\definecolor{darkblue}{RGB}{51,51,178}
\definecolor{lightgray}{RGB}{231,231,231}
\definecolor{lightblue}{RGB}{180,180,254}
\definecolor{lightred}{HTML}{FEB4B4}
\definecolor{darkcyan}{HTML}{7FBFBF}

\newcommand{\cl}{\operatorname{cl}}

\DeclareMathOperator*{\argmax}{arg\,max}
\DeclareMathOperator*{\argmin}{arg\,min}

\newcounter{myquestion}

\newcounter{mycomment}
\newcommand{\Comment}[2][BN]{%
\refstepcounter{mycomment}%
{%
\setstretch{0.7}%
\todo[inline, 
backgroundcolor=black!30!white
, size=\small]{%
\textbf{Comment[#1\themycomment]:} \\ #2}%
}}

\newcommand{\bnote}[2][]{
    \todo[#1,backgroundcolor=cyan!20!white,bordercolor=black,linecolor=cyan,size=\tiny]{BN: #2}}

\renewcommand{\phi}{\varphi}
\renewcommand{\rho}{\varrho}

\newcommand{\R}{\mathbb{R}}
\newcommand{\Z}{\mathbb{Z}}

\newcommand{\size}{\operatorname{size}}

\newcommand{\cL}{\mathcal L}

\usepackage[many]{tcolorbox}

\newtcolorbox[
  auto counter,
  number within=section,
  crefname={algorithm}{Algorithm}]%
{boxProblem}[2][]{
  colback=white,
  code={\def\mytitle{#2}},
  title={\bf Algorithm \thetcbcounter.} \mytitle,%
  sharp corners,
  #1}

  \newtcolorbox[
    auto counter,
    number within=section,
    crefname={subroutine}{Subroutine}]%
  {subroutineBox}[2][]{
    colback=white,
    code={\def\mytitle{#2}},
    title={\bf Subroutine \thetcbcounter.} \mytitle,%
    sharp corners,
    #1}

\newcounter{Hequation}

\makeatletter
\g@addto@macro\equation{\stepcounter{Hequation}}
\makeatother

\title{On Circuit Diameter Bounds via Circuit Imbalances\thanks{This project has received funding from the European Research Council (ERC) under the European Union's Horizon 2020 research and innovation programme (grant agreements ScaleOpt--757481 and QIP--805241). This work was done while the authors participated in the Discrete Optimization Trimester Program at the Hausdorff Institute for Mathematics in Bonn in 2021. An extended abstract of this paper has appeared in Proceedings of the 23rd Conference on Integer Programming and Combinatorial Optimization, IPCO 2022.}
}
\date{{\tt \{dadush,zhuan\}@cwi.nl, bnatura3@gatech.edu, l.vegh@lse.ac.uk}}

\author[1]{Daniel Dadush}
\author[1]{Zhuan Khye Koh}
\author[2]{Bento Natura} 
\author[3]{L{\'{a}}szl{\'{o}} A. V{\'{e}}gh}
\affil[1]{Centrum Wiskunde \& Informatica, The Netherlands}
\affil[2]{Georgia Institute of Technology, USA}
\affil[3]{London School of Economics and Political Science, UK} 

\begin{document}

\maketitle

\begin{abstract}
We study the circuit diameter of polyhedra, introduced by Borgwardt, Finhold, and Hemmecke (SIDMA 2015) as a relaxation of the combinatorial diameter. We show that the circuit diameter of a system $\{x\in \R^n:\, Ax=b,\,  \0\le x\le u\}$ for $A\in\R^{m\times n}$ is bounded by $O(m\min\{m, n - m\}\log(m+\kappa_A)+n\log n)$, where $\kappa_A$ is the circuit imbalance measure of the constraint matrix. This yields a strongly polynomial circuit diameter bound if e.g., all entries of $A$ have polynomially bounded encoding length in $n$. 
 Further, we present circuit augmentation algorithms for LPs using the minimum-ratio circuit cancelling rule. Even though the standard minimum-ratio circuit cancelling algorithm is  not  finite in general, our variant can solve an LP in $O(mn^2\log(n+\kappa_A))$ augmentation steps.
\end{abstract}

\section{Introduction}
The \emph{combinatorial diameter} of a polyhedron $P$ is the diameter of the vertex-edge graph associated with $P$. Hirsch's famous conjecture from 1957 asserted that the combinatorial diameter of a $d$-dimensional polytope (bounded polyhedron) with $f$ facets is at most $f-d$. This was disproved by Santos in 2012 \cite{santos2012}. The \emph{polynomial Hirsch conjecture}, i.e., finding a poly$(f)$ bound on the combinatorial diameter remains a central question in the theory of linear programming.

The first quasipolynomial bound was given by Kalai and Kleitman \cite{Kalai1992,KK1992}, see \cite{Sukegawa2017} for the best current bound and an overview of the literature. Dyer and Frieze \cite{Dyer1994} proved the polynomial Hirsch conjecture for totally unimodular (TU) matrices. For a system $\{x\in\R^d:\, Mx\le b\}$ with integer constraint matrix $M$, polynomial diameter bounds were given in terms of the maximum subdeterminant $\Delta_M$ \cite{Bonifas2014,brunsch2013,eisenbrand2017,dadush2016shadow}. These arguments can be strengthened to using a parametrization by a `discrete curvature measure' $\delta_M\ge 1/(d\Delta^2_M)$. The best such bound was given by Dadush and H\"ahnle \cite{dadush2016shadow} as $O(d^3\log(d/\delta_M)/\delta_M)$, using a shadow vertex simplex algorithm.

\medskip

As a natural relaxation of the combinatorial diameter, Borgwardt, Finhold, and Hemmecke \cite{Borgwardt2015} initiated the study of \emph{circuit diameters}. 
Consider a polyhedron in standard equality form 
\begin{equation}\label{sys:polytope}\tag{P}
P=\{\,x\in \R^n: Ax=b, x\ge \0\,\}
\end{equation}
 for $A\in \R^{m\times n}$, $b\in \R^m$; we assume $\rk(A)=m$. For the linear space $W=\ker(A)\subseteq \R^n$, $g\in W$  is an 
 \emph{elementary vector} if $g$ is a support-minimal nonzero vector in $W$, that is, no $h\in W\setminus\{\0\}$ exists such that $\supp(h)\subsetneq \supp(g)$. A \emph{circuit} in $W$ is the support of some elementary vector; these are precisely the circuits of the associated  linear matroid $\mathcal{M}(A)$. 
 We remark that many papers on circuit diameter, e.g., \cite{Borgwardt2018circuit,Borgwardt2016-hierarchy,Borgwardt2015,Borgwardt2018,Kafer2019}, refer to elementary vectors as circuits; we follow the traditional convention of \cite{Fulkerson67,Rockafellar67,Lee89}.
 We let $\EE(W)=\EE(A)\subseteq W$ and $\circuits(W)=\circuits(A)\subseteq 2^n$
 denote the set of elementary vectors and circuits in the space $W=\ker(A)$, respectively.
All edge directions of $P$ are elementary vectors, and the set of elementary vectors $\EE(A)$ equals the set of all possible edge directions of $P$ in the form \eqref{sys:polytope} for varying $b\in\R^m$ \cite{Sturmfels1997}.

A \emph{circuit walk} is a sequence of points $x^{(0)},x^{(1)},\ldots,x^{(k)}$ in $P$ such that for each $i=0,\ldots,k-1$, $x^{(i+1)}=x^{(i)}+\alpha^{(i)} g^{(i)}$ for some $g^{(i)}\in \EE(A)$ and $\alpha^{(i)} > 0$, and further, 
$x^{(i)}+\alpha g^{(i)}\notin P$ for any $\alpha>\alpha^{(i)}$,
i.e., each consecutive circuit step is \emph{maximal}. The \emph{circuit diameter} of $P$ is the maximum length (number of steps) of a shortest circuit walk between any two vertices $x,y\in P$. Note that, in contrast to walks in the vertex-edge graph, circuit walks are non-reversible and the minimum length from $x$ to $y$ may be different from the one from $y$ to $x$; this is due to the maximal step requirement. The circuit-analogue of Hirsch conjecture, formulated in \cite{Borgwardt2015}, asserts that the circuit diameter of $d$-dimensional polyhedron with $f$ facets is at most $f-d$; this  may be true even for unbounded polyhedra, see \cite{Borgwardt2018circuit}.
 For $P$ in the form \eqref{sys:polytope}, $d=n-m$ and the number of facets is at most $n$; hence, the conjectured bound is $m$.

 Circuit diameter bounds have been shown for some combinatorial polytopes such as dual transportation polyhedra \cite{Borgwardt2015},  matching, travelling salesman, and fractional stable set polytopes  \cite{Kafer2019}.
The paper \cite{Borgwardt2016-hierarchy} introduced several other variants of circuit diameter, and explored the relation between them. We note that \cite{Borgwardt2016-hierarchy,Kafer2019,DKS19} considers circuits for LPs given in the general form
$\{x\in\R^n:\, Ax=b,\, Bx\le d\}$. In Section~\ref{sec:gen-circuits}, we show that this setting can be reduced to the form \eqref{sys:polytope}.

\paragraph{Circuit augmentation algorithms} Circuit diameter bounds are inherently related to \emph{circuit augmentation algorithms}. This is a general algorithmic scheme to solve an LP
\begin{equation}
  \label{sys:lp}\tag{LP}
  \min \; \pr{c}{x}  \quad \mathrm{s.t.}\quad
  Ax=b\, , \, 
  x \geq \0\, .\\
  \end{equation}
  The algorithm 
proceeds through a sequence of feasible solutions $x^{(t)}$. An initial feasible $x^{(0)}$ is required in the input. For $t=0,1,\ldots,$ the current $x^{(t)}$ is updated to $x^{(t+1)}=x^{(t)}+\alpha g$ for some $g\in \EE(A)$ such that $\pr{c}{g}\le 0$, and $\alpha>0$  such that $x^{(t)}+\alpha g$ is feasible. 
The elementary vector $g$ is an \emph{augmenting direction} if $\pr{c}{g}<0$ and such an $\alpha>0$ exists;
by  LP duality, $x^{(t)}$ is optimal if and only if no augmenting direction exists. 
The augmentation is \emph{maximal} if $x^{(t)}+\alpha' g$ is infeasible for any $\alpha'>\alpha$; $\alpha$ is called the maximal stepsize for $x^{(t)}$ and $g$. 
Clearly, an upper bound on the number of steps of a circuit augmentation algorithm with maximal augmentations for arbitrary cost $c$ and starting point  $x^{(0)}$ yields an upper bound on the circuit diameter.

Simplex is a circuit augmentation algorithm that is restricted to using special elementary vectors corresponding to edges of the polyhedron. Many network optimization algorithms can be seen as special circuit augmentation algorithms. 
Bland~\cite{Bland76} introduced a circuit augmentation algorithm for LP, that generalizes the  Edmonds--Karp--Dinic maximum flow algorithm and its analysis, %
see also \cite[Proposition 3.1]{Lee89}. Circuit augmentation algorithms were revisited by De Loera, Hemmecke, and Lee in 2015 \cite{DHL15}, analyzing different augmentation rules and also extending them to integer programming. 
De Loera, Kafer, and Sanit\`a \cite{DKS19} studied the convergence of these rules on 0/1-polytopes, as well as the computational complexity of performing them. 
We refer the reader to \cite{DHL15} and \cite{DKS19} for a more detailed overview of the background and history of circuit augmentations.

\paragraph{The circuit imbalance measure} For a linear space $W=\ker(A)\subseteq \R^n$, the \emph{circuit imbalance} $\kappa_W=\kappa_A$ is defined as the maximum of $|g_j/g_i|$ over all elementary vectors $g\in \EE(W)$, $i,j\in \supp(g)$. It can be shown that $\kappa_W=1$ if and only if $W$ is a unimodular space, i.e., the kernel of a totally unimodular matrix. This parameter and related variants have been used implicitly or explicitly in many areas of linear programming and discrete optimization, see \cite{Ekbatani2021} for a recent survey. It is closely related to the Dikin--Stewart--Todd condition number $\bar\chi_W$
 that  plays a key role in layered-least-squares interior point methods introduced by Vavasis and Ye \cite{Vavasis1996}. An LP of the form \eqref{sys:lp} for $A\in\R^{m\times n}$  can be solved in time poly$(n,m,\log\kappa_A)$, which is strongly polynomial if $\kappa_A \le 2^{\mathrm{poly}(n)}$; see \cite{DHNV20,DadushNV20} for recent developments and references.

\paragraph{Imbalance and diameter}  The combinatorial diameter bound $O(d^3\log(d/\delta_M)/\delta_M)$ from \cite{dadush2016shadow}  mentioned above translates to a bound  $O((n-m)^{3} m \kappa_{A}\log(\kappa_{A}+n))$ for the system in the form \eqref{sys:polytope}, see \cite{Ekbatani2021}. For circuit diameters, the Goldberg-Tarjan minimum-mean cycle cancelling algorithm for minimum-cost flows \cite{Goldberg89} naturally extends to a circuit augmentation algorithm for general LPs using the \emph{steepest-descent} rule. This yields a circuit diameter bound $O(n^2m\kappa_A \log(\kappa_A + n))$ \cite{Ekbatani2021}, see also \cite{Gauthier2021}. However, note that these bounds may be exponential in the bit-complexity of the input.

\subsection{Our contributions}
 Our first main contribution improves the $\kappa_A$ dependence to a $\log\kappa_A$ dependence for circuit diameter bounds.
\begin{theorem}\label{thm:circuit-diameter}
The circuit diameter of a system in the form \eqref{sys:polytope} with constraint matrix $A\in\R^{m\times n}$ is $O(m \min \{m, n- m\}\log(m+\kappa_A))$.
\end{theorem}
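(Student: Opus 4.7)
The plan is to fix any two vertices $x, y \in P$ and construct a circuit walk of the claimed length. The strategy is a phase-based \emph{coordinate-fixing} argument: each phase permanently sets one additional coordinate of the iterate to its target value in $y$ and uses $O(\min\{m, n-m\} \log(m + \kappa_A))$ maximal circuit steps. Since $x$ and $y$ are basic feasible solutions of a system with $m$ equalities, $|\supp(x) \cup \supp(y)| \le 2m$, which will let us bound the number of phases by $O(m)$.

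For the inside of a phase, I would maintain a weighted potential $\Phi(x') = \|x' - y\|_1$ (or a $\kappa_A$-weighted variant) and prove a \emph{progress lemma}: from any intermediate $x'$, there exists $g \in \EE(A)$ that is conformal with $y - x'$ and whose maximal circuit step reduces $\Phi$ by a factor of at least $1 - \Omega(1/\min\{m, n-m\})$. The lemma is proved by taking a conformal decomposition of $y - x'$ into elementary vectors and picking the term with the largest weighted reduction. The crucial point is to bound the number of terms by $\min\{m, n-m\}$: one bound comes from $\dim \ker A = n - m$, while the other comes from the fact that, after restricting to the active support inside a phase (of size at most $2m$), the corresponding subspace of $\ker A$ has dimension at most $m$. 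This yields the multiplicative decrease with the correct denominator.

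Second, I would convert the geometric decrease of $\Phi$ into a discrete fixing event using a Hoffman/proximity bound parametrized by $\kappa_A$. After $O(\min\{m, n-m\}\log(m+\kappa_A))$ iterations of the progress lemma, $\Phi(x')$ drops below a threshold scaled by $1/\kappa_A$ times the smallest nonzero gap in $y$. At this point a proximity argument produces a vertex $y'$ agreeing with $y$ on at least one additional coordinate beyond those already fixed, reachable from $x'$ by one more maximal circuit step. Subsequent phases are then carried out in the minor of the matroid obtained by contracting the fixed coordinates (or equivalently, restricting augmenting directions to the kernel of a submatrix of $A$ that preserves the fixings). Summing over $O(m)$ phases gives the stated bound.

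The main obstacle I anticipate is the progress lemma with the tight $\min\{m,n-m\}$ factor: a naive conformal decomposition could have up to $n$ terms, yielding only $1 - \Omega(1/n)$ progress and losing a factor of $n/\min\{m,n-m\}$ in the final bound. Sharpening this requires carefully controlling the support of intermediate iterates throughout a phase, so that the relevant conformal decomposition actually lives in a subspace of dimension $\le \min\{m, n-m\}$. A secondary difficulty is the proximity step: producing the next fixing in a single (or constant number of) additional maximal circuit steps, while respecting both maximality and the already fixed coordinates, is where the $\log(m + \kappa_A)$ rather than $\log(n \kappa_A)$ or worse must be squeezed out, via a careful $\kappa_A$-dependent Hoffman bound applied to the minor of $A$ corresponding to the free coordinates.
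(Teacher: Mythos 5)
Your plan shares the right raw ingredients with the paper (reduce to $|\supp(x^{(0)}) \cup \supp(x^*)| \le 2m$, take a conformal circuit decomposition of $x^* - x^{(t)}$, pick the best term and do a maximal step, use a $\kappa_A$-parametrized proximity bound), but the driving mechanism you propose is different from the paper's and, as stated, has gaps that I do not think you can patch without essentially rediscovering the paper's argument.

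First, the potential $\Phi(x') = \|x' - y\|_1$ does not decrease geometrically under maximal circuit steps: maximality forces you to overshoot in general, and on the basic coordinates $B = \supp(x^*)$ the overshoot can \emph{increase} $|x'_i - x^*_i|$. The paper avoids this by instead using $\|x'_N\|_1$ where $N = [n]\setminus B$ and $x^*_N = \0$. This is the crucial choice: every term $h^{(j)}$ in the conformal decomposition satisfies $h^{(j)}_N \le \0$ (since $x^*_N = \0$ and $x^{(t)} \ge \0$), so the $N$-coordinates are monotone nonincreasing and overshoot can only help, giving the clean $(1 - 1/(n-m))$ contraction. On the $B$-coordinates, overshoot remains uncontrolled, and the paper handles it through a separate bookkeeping device (the set $R_t = \{i : x^{(t)}_i \le (n-m)x^*_i\}$ and Lemma~\ref{lem:I-monotone}), not by fixing anything.

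Second, the phase-based ``permanently fix one coordinate to its value in $y$, then work in the contraction minor'' step is not sound. Contraction minors of the linear matroid do \emph{not} yield elementary vectors of $\ker(A)$: circuits of the contraction can be proper subsets of original circuits, so augmenting along them is not a legal circuit step. (Deletion minors are fine, but deletion corresponds to fixing a coordinate to $0$, not to $y_i > 0$.) Even restricting to elementary vectors $g\in\EE(A)$ with $g_i = 0$ is not clearly sufficient to reach the target, and more importantly the proximity step you gesture at --- producing a fresh exact fixing from a single maximal step once $\Phi$ is small --- is exactly the hard part. Maximal steps give you no control over where you land except that \emph{some} coordinate hits $0$; the paper exploits precisely this: the coordinate $i \in \supp(x^{(t)})\setminus\supp(x^{(t+1)})$ forced to $0$ by maximality is what enlarges $R_{t+1}$, and the complementary set $L_t = \{i : x^*_i > n\kappa_A\|x^{(t)}_N\|_1\}$ grows when the potential has shrunk enough. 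Neither $L_t$ nor $R_t$ is a set of ``fixed'' coordinates; they are monotone indicator sets whose combined growth, one new index every $O((n-m)\log(m+\kappa_A))$ iterations, gives the bound after the $n \le 2m$ reduction. I'd recommend replacing the fixing-and-contraction idea with this two-set bookkeeping, and replacing $\|x'-y\|_1$ with $\|x'_N\|_1$, since those are the two ideas your sketch is missing.
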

The proof in Section~\ref{sec:diam-bound} is via a simple `shoot towards the optimum' scheme. 
We need the well-known concept of \emph{conformal circuit decompositions}. We say that $x,y\in\R^n$ are \emph{sign-compatible} if $x_iy_i\ge 0$ for all $i\in[n]$. We write $x\sqsubseteq y$ if they are sign-compatible and further $|x_i|\le|y_i|$ for all $i\in [n]$. It follows from Carath\'{e}odory's theorem and Minkowski--Weyl theorem that for any linear space $W\subseteq \R^n$ and $x\in W$, there exists a decomposition $x=\sum_{j=1}^k h^{(j)}$ such that $h^{(j)}\in \EE(W)$, $h^{(j)}\sqsubseteq x$ for all $j\in [k]$ and $k\le \dim(W)$. This is called a \emph{conformal circuit decomposition} of $x$ (see also Definition~\ref{def:conformal} and Lemma~\ref{lem:conformal} below).

Let $B\subseteq [n]$ be a feasible basis and $N=[n]\setminus B$, i.e., $x^*=(A_B^{-1}b,\0_N) \ge \0_n$ is a basic feasible solution. This is the unique optimal solution to \eqref{sys:lp} for the cost function $c=(\0_B,\1_N)$. 
Let $x^{(0)}\in P$ be an arbitrary vertex.
We may assume that $n\leq 2m$, by restricting to the union of the support of $x^*$ and $x^{(0)}$, and setting all other variables to 0.
For the current iterate $x^{(t)}$, let us consider a conformal circuit decomposition 
$x^*-x^{(t)}= \sum_{j=1}^k h^{(j)}$. Note that the existence of such a decomposition \emph{does not} yield a circuit diameter bound of $n$, due to the maximality requirement in the definition of circuit walks. For each $j\in [k]$, $x^{(t)}+h^{(j)}\in P$, but there might be a larger augmentation $x^{(t)}+\alpha h^{(j)}\in P$ for some $\alpha>1$.

Still, one can use this decomposition to construct a circuit walk. Let us pick the most improving circuit from the decomposition, i.e., the one maximizing $-\pr{c}{h^{(j)}}=\|h^{(j)}_N\|_1$, and obtain $x^{(t+1)}=x^{(t)}+\alpha^{(t)} h^{(j)}$ for the maximum stepsize $\alpha^{(t)}\ge 1$. The proof of Theorem~\ref{thm:circuit-diameter} is based on analyzing this procedure. The first key observation is that $\pr{c}{x^{(t)}}=\|x^{(t)}_N\|_1$ decreases geometrically. 
Then, we look at the set of indices $L_t=\{i\in [n]:\, x^*_i>n\kappa_A\|x^{(t)}_N\|_1\}$ and $R_t=\{i\in [n]:\, x^{(t)}_i\le (n - m)x^*_i\}$, and show that indices may never leave these sets once they enter. Moreover, a new index is added to either set every $O(m\log(m+\kappa_A))$ iterations.
In Section~\ref{sec:capacity}, we extend this bound to the setting with upper bounds on the variables. 
\begin{theorem}\label{thm:circuit-diameter-bounds}
The circuit diameter of a system in the form $Ax=b$, $\0\le x\le u$ with constraint matrix $A\in\R^{m\times n}$ is $O(m \min\{m, n - m\}\log(m+\kappa_A) + (n-m)\log n)$.
\end{theorem}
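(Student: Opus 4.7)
The plan is to reduce to the equality-form setting of Theorem~\ref{thm:circuit-diameter} via slack variables and then refine its proof. I would set $y = (x, s) \in \R^{2n}$ with $s := u - x$, so that the capacitated polytope becomes $P' = \{y \in \R^{2n}_{\ge 0} : A'y = b'\}$ for
\[
A' = \begin{pmatrix} A & \0 \\ I & I \end{pmatrix} \in \R^{(m+n)\times 2n}, \quad b' = \begin{pmatrix} b \\ u \end{pmatrix}.
\]
Since $\ker(A') = \{(g,-g) : g \in \ker(A)\}$, every elementary vector of $\ker(A')$ has the form $(g,-g)$ with $g \in \EE(A)$; in particular $\kappa_{A'} = \kappa_A$, and circuit walks transfer bijectively between the two systems, as the maximality of a direction $g$ at $x^{(t)}$ (triggered by either $x_i \to 0$ or $x_i \to u_i$) coincides with maximality of $(g,-g)$ at $y^{(t)}$ (triggered by $y_i \to 0$ or $y_{n+i} \to 0$). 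A direct application of Theorem~\ref{thm:circuit-diameter} to $A'$, with $m' = m+n$ and $n' = 2n$, would only give $O((m+n)(n-m)\log(n+\kappa_A))$, which is too weak when $n \gg m$.

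To obtain the sharper bound I would rerun the shoot-towards-the-optimum argument of Theorem~\ref{thm:circuit-diameter} inside the augmented system while exploiting the coupling of each pair $(x_i, s_i)$ in the stabilization analysis. Because a step changes $x_i$ by $+\alpha g_i$ and $s_i$ by $-\alpha g_i$ in lockstep, each original coordinate $i \in [n]$ should be treated as a single unit whose effective target value is $x^*_i$ if $i \in N_0 := \{j : x^*_j = 0\}$, $u_i - x^*_i$ if $i \in N_u := \{j : x^*_j = u_j\}$, and free otherwise. At the vertex $x^*$ exactly $m$ indices are free and $|N_0 \cup N_u| = n - m$ indices are saturated, so the stabilization sets $L_t$ and $R_t$ from Theorem~\ref{thm:circuit-diameter}, redefined on the $n$ original indices, remain of size at most $m$. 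The same geometric-decay plus index-stabilization argument then yields the first term $O(m\min\{m, n-m\}\log(m+\kappa_A))$, with the $m$ and $\kappa_A$ inherited from $A$ rather than from the inflated $A'$.

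The additional $(n-m)\log n$ term accounts for the $n - m$ saturated coordinates: once a saturated index's free companion has stabilized in $L_t \cup R_t$, driving its own value to the correct bound requires the geometric decay of the cost $\pr{c'}{y^{(t)}}$ to bring the relevant coordinate below the stabilization threshold, which takes $O(\log n)$ additional rounds per saturated index. The hard part will be verifying the invariant that indices never leave $L_t$ or $R_t$ once they enter in this two-sided setting: the proof of Theorem~\ref{thm:circuit-diameter} uses a one-sided threshold argument that must be made symmetric here, pairing the analysis at $x_i = 0$ with its mirror at $x_i = u_i$, and ensuring that a maximal step forced by hitting an upper bound cannot undo any previously captured coordinate. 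Once the symmetric invariant is in hand, the two contributions add to give the claimed bound.
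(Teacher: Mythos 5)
The reduction to the augmented system $A'$ is a reasonable first step, and you correctly diagnose that a direct application of Theorem~\ref{thm:circuit-diameter} to $A'$ (with $m' = m+n$, $n' = 2n$, $n' - m' = n-m$) is too weak. But from there the proposal diverges from the paper and leaves a genuine gap. Your plan is to rerun the stabilization analysis of Theorem~\ref{thm:circuit-diameter} on the augmented system, pairing $(x_i, s_i)$ as a single unit, and you assert that ``the stabilization sets $L_t$ and $R_t$ \ldots remain of size at most $m$.'' For $L_t$ this is plausible ($L_t$ lives inside the basic set), but for $R_t$ it is not: in the proof of Theorem~\ref{thm:circuit-diameter}, $R_t = \{i : x^{(t)}_i \le (n-m) x^*_i\}$ can grow to include \emph{all} $n$ coordinates, and the bound $|L_\infty| + |R_\infty| = O(m)$ is obtained only after the reduction $n \le 2m$, which relies on restricting to $\supp(x^*) \cup \supp(x^{(0)})$. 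In the capacitated setting both supports can be close to $[n]$, so that reduction fails; without it your argument yields at best an iteration count of order $n(n-m)\log(n+\kappa_A)$. The sketch of where the $(n-m)\log n$ term comes from (``once a saturated index's free companion has stabilized \ldots requires the geometric decay \ldots $O(\log n)$ rounds per saturated index'') is not an argument the paper makes and I don't see how to make it precise.

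The paper takes a different route that sidesteps all of this. It runs a dedicated \emph{preprocessing} phase (Algorithm~\ref{proc:feas-diam-bounded}): assign cost $c_i = 0$ on the basic set $B$, $c_i = 1/u_i$ on $L$ (indices with $x^*_i = 0$) and $c_i = -1/u_i$ on $H$ (indices with $x^*_i = u_i$), so that $x^*$ has cost $-|H|$ and the optimality gap starts at $O(n)$. Using the ``best improving term'' from a conformal circuit decomposition of $x^* - x^{(t)}$, the gap decays by a factor $1-\frac{1}{n-m}$ per step, so after $O((n-m)\log n)$ steps the gap is below $1$. From that point on, \textsc{Support-Circuit} steps on the support of $x^{(t)}$ are guaranteed to push variables to the \emph{correct} bound (hitting the wrong bound would force the gap back above $1$), and after at most $n$ such steps only $\le m$ non-basic coordinates are not yet at their target bound. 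The remaining system has $\le 2m$ variables, and only then is the slack-variable reformulation applied so that Theorem~\ref{thm:circuit-diameter} gives the $O(m\min\{m,n-m\}\log(m+\kappa_A))$ term. The essential ingredient your proposal is missing is precisely this preprocessing with \textsc{Support-Circuit}, which eliminates $\kappa_A$-independent dependence on $n$ and makes the Theorem~\ref{thm:circuit-diameter} analysis applicable afterwards.
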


There is a straightforward reduction from the capacitated form to \eqref{sys:polytope} by adding $n$ slack variables; however, this would give an $O(n^2\log(n+\kappa_A))$ bound. For the stronger bound, we use a  preprocessing that involves cancelling circuits in the support of the current solution; this eliminates all but $O(m)$ of the capacity bounds in $O(n\log n)$ iterations, independently of $\kappa_A$.

For rational input,  $\log(\kappa_A)=O(\size(A))$ where $\size(A)$ denotes the total encoding length of $A$ \cite{DHNV20}. Hence, our result yields an $O(m\min\{m, n - m\} \size(A)+n\log n)$ diameter bound on $Ax=b$, $\0\le x\le u$. This can be compared with 
the bounds $O(n \size(A,b))$ using deepest descent augmentation steps in \cite{DHL15,DKS19}, where $\size(A,b)$ is the encoding length of $(A,b)$. (Such a bound holds for every augmentation rule that decreases the optimality gap geometrically, including the minimum-ratio circuit rule discussed below.) Note that our bound is independent of $b$. Furthermore, it is also applicable to systems given by irrational inputs, in which case arguments based on subdeterminants and bit-complexity cannot be used.

In light of these results, the next important step towards the polynomial Hirsch conjecture might be to show a poly$(n,\log\kappa_A)$ bound on the combinatorial diameter of \eqref{sys:polytope}. Note that---in contrast with the circuit diameter---not even a poly$(n,\size(A,b))$ bound is known. In this context, the best known general bound is $O((n-m)^{3} m \kappa_{A}\log(\kappa_{A}+n))$ implied by \cite{dadush2016shadow}.

\paragraph{Circuit augmentation algorithms} 
The diameter bounds in Theorems~\ref{thm:circuit-diameter} and \ref{thm:circuit-diameter-bounds} rely on knowing the optimal solution $x^*$; thus, they do not provide efficient LP algorithms. We next present circuit augmentation algorithms with poly$(n,m,\log\kappa_A)$ bounds on the number of iterations. Such algorithms require  subroutines for finding augmenting circuits. In many cases, such subroutines are LPs themselves. However, they may be of a simpler form, and might be easier to solve in practice. Borgwardt and Viss \cite{BorgwardtV2020} exhibit an implementation of a steepest-descent circuit augmentation algorithm with encouraging computational results.

We assume that a subroutine
 \textsc{Ratio-Circuit}$(A,c,w)$ is available; this implements the well-known minimum-ratio circuit rule. It takes as input a matrix $A\in\R^{m\times n}$, 
$c\in\R^n$, $w\in (\R_{++}\cup\{\infty\})^n$, and returns a basic optimal solution to the system  
\begin{equation}
  \label{sys:minratio-i}
  \min \; \pr{c}{z}\, \quad \mathrm{s.t.}\quad
  Az =\0\, , \, 
  \pr{w}{z^-}  \leq 1\, ,
  \end{equation} 
where $(z^-)_i := \max\{0,-z_i\}$ for $i\in [n]$.
Here, we use the convention $w_iz_i = 0$ if $w_i = \infty$ and $z_i = 0$.
This system can be equivalently written as an LP using auxiliary variables.
If bounded, a basic optimal solution is either $\0$ or an elementary vector $z\in \EE(A)$ that minimizes $\pr{c}{z}/\pr{w}{z^-}$.

Given $x\in P$, we use weights $w_i=1/x_i$ (with $w_i=\infty$ if $x_i=0$). For minimum-cost flow problems, this rule was proposed by Wallacher \cite{Wallacher}; such a cycle can be found in strongly polynomial time for flows. The main advantage of this rule is that the optimality gap decreases by a factor $1-1/n$ in every iteration. This rule, along with the same convergence property,
can be naturally extended to  linear programming \cite{mccormick-shioura-not-strongly}, and has found several combinatorial applications, e.g., \cite{Wallacher1999,Wayne02}, and has also been used in the context of integer programming \cite{Schulz1999}. 

On the negative side, Wallacher's algorithm is \emph{not} strongly polynomial: it does not terminate finitely for minimum-cost flows, as shown in \cite{mccormick-shioura-not-strongly}. In contrast, our algorithms 
achieve a strongly polynomial running time whenever $\kappa_A\le 2^{\mathrm{poly}(n)}$. An important modification is the occasional use of a second type of circuit augmentation step \textsc{Support-Circuit} that removes circuits in the support of the current (non-basic) iterate $x^{(t)}$ (see \Cref{subroutine:support-circuit}); this can be implemented using simple linear algebra. 
Our first result addresses the feasibility setting:
\begin{theorem}\label{thm:feasible-augment}
Consider an LP of the form \eqref{sys:lp} with cost function 
$c = (\0_{[n]\setminus N}, \1_N)$ for some $N\subseteq [n]$. There exists a circuit augmentation algorithm that either finds a solution $x$ such that $x_N=\0$ or a dual certificate that no such solution exists, using  $O(mn\log(n+\kappa_A))$ \textsc{Ratio-Circuit} and $(m+1)n$ \textsc{Support-Circuit} augmentation steps.
\end{theorem}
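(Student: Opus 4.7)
The plan is to combine Wallacher's minimum-ratio circuit-canceling rule with an identification scheme driven by $\kappa_A$, interleaving \textsc{Support-Circuit} calls to keep the support of the iterate small and to stabilize identifications. The algorithm proceeds in phases, where each phase consists of $O(n\log(n+\kappa_A))$ \textsc{Ratio-Circuit} calls followed by at most $n$ \textsc{Support-Circuit} calls, and at the end of each phase a new coordinate is added to an identified-basic set $F\subseteq[n]\setminus N$. Since an optimal basic solution of the feasibility LP (if any) has at most $m$ positive coordinates, all outside $N$, at most $m+1$ phases suffice; this gives the advertised counts $O(mn\log(n+\kappa_A))$ and $(m+1)n$.

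Two ingredients drive the analysis. First, the classical Wallacher gap estimate: with weights $w_i=1/x^{(t)}_i$ (and $\infty$ where $x^{(t)}_i=0$), the vector $(x^*-x^{(t)})/|\supp(x^{(t)})|$ is feasible for \eqref{sys:minratio-i}, so the returned circuit $g$ satisfies $\pr{c}{g}\le (\OPT-\pr{c}{x^{(t)}})/|\supp(x^{(t)})|$; taking the maximal step (which is at least $1$ by the normalization $\pr{w}{g^-}\le 1$) multiplies the optimality gap by a factor of at most $1-1/|\supp(x^{(t)})|\le 1-1/n$. Second, a proximity lemma expressed in $\kappa_A$: once the gap has contracted by a factor of $\mathrm{poly}(n+\kappa_A)$, some coordinate $i$ of $x^{(t)}$ becomes \emph{provably} positive in every optimum of the LP. This uses a conformal circuit decomposition $x^*-x^{(t)}=\sum_j h^{(j)}$ with $h^{(j)}\in\EE(A)$, $h^{(j)}\sqsubseteq x^*-x^{(t)}$, and the defining inequality $\max_k |h^{(j)}_k|\le \kappa_A \min_{k\in\supp(h^{(j)})}|h^{(j)}_k|$ to relate any large negative coordinate $h^{(j)}_i$ (with $i\notin N$) to a correspondingly large $N$-contribution $\sum_{k\in N}|h^{(j)}_k|$, which enters the gap $\pr{c}{x^{(t)}}-\OPT$. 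Summing over the at most $n$ pieces lower-bounds the gap by roughly $x^{(t)}_i/(n\kappa_A)$ for any coordinate $i\notin N$ that is not identifiable, so once the contracted gap crosses this threshold, $i$ enters $F$ and, thanks to \textsc{Support-Circuit}, stays stably separated from $0$ in subsequent phases.

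Termination arises in two ways. If $|F|=m$ after some phase, $F$ is the support of the optimal basic solution; solving $A_F x_F=b$ and checking $x_F\ge\0$ yields the solution, whereas failure would contradict the identification and can only occur in the infeasible branch, which is handled below. If instead some phase ends without a new index joining $F$, then the gap has stopped contracting meaningfully; the dual of \eqref{sys:minratio-i} at that moment supplies $y\in\R^m$ with $A^\top y\le c$ and $b^\top y>0$, certifying that no $x\ge\0$ with $Ax=b$ and $x_N=\0$ exists.

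The main obstacle is the proximity lemma: quantifying precisely, in terms of $\kappa_A$ and $n$, the threshold at which a new coordinate becomes identifiable, and proving that identifications remain valid across phases even as $x^{(t)}$ changes. The role of \textsc{Support-Circuit} is the subtle piece of this invariant: by cancelling every circuit contained in $\supp(x^{(t)})$ it both keeps the per-iteration gap factor $1-1/|\supp(x^{(t)})|$ sharp and ensures identified coordinates remain bounded away from zero, which is what ultimately explains its $(m+1)n$ count in the statement.
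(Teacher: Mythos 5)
Your overall architecture matches the paper's: Wallacher progress via \textsc{Ratio-Circuit} (Lemma~\ref{lem:progress}), interleaved \textsc{Support-Circuit} calls, a growing identified set, and $O(n\log(n+\kappa_A))$ ratio-circuit calls per phase with at most $m+1$ phases. But the details where the proof actually closes are missing or off, and two of the gaps are substantive.

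First, your termination logic does not produce a circuit augmentation algorithm. You propose that once $|F|=m$ you solve $A_F x_F = b$ directly; that is an external linear-algebra step, not a circuit augmentation, and the theorem asks for a walk that reaches a point with $x_N=\0$ (or a dual certificate) via augmentations. In the paper's \Cref{alg:feas}, termination happens only when $x^{(t)}_N = \0$ (trivially satisfying the output condition) or when \textsc{Ratio-Circuit} returns a $y^{(t)}$ with $\pr{b}{y^{(t)}}>0$, which can occur in \emph{any} ratio-circuit iteration, not specifically ``when a phase ends without a new identification.'' The cardinality-$m$ bound on phases is therefore a bound on \emph{iterations}, not a termination trigger, and the invariant that makes it work is $\rk(\cL_t)\le m$, not $|\cL_t|\le m$.

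Second, and more centrally, your identification criterion is both stronger than needed and not actually established. You assert that the newly identified coordinate $i$ is ``provably positive in every optimum,'' which is what licenses your claim $|F|\le m$. To prove that from $x^{(t)}_i$ being large relative to the gap, you would need a proximity bound like \Cref{lem:circuit-N-kappa} applied to $z-x^{(t)}$, but that lemma requires $A_{[n]\setminus N}$ to have full column rank, which need not hold. The paper circumvents this with the weaker but correct invariant: once $i\in\cL_t$ with $x^{(t)}_i \ge 4mn\kappa_A\|x^{(t)}_N\|_1$, it \emph{stays} large relative to $\|x^{(t')}_N\|_1$ at all later $t'$ (\Cref{cl:clt}), and the potential that drops $m+1$ times is $\rk(\cL_t)$. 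The engine that produces a new index outside $\cl(\cL_t)$ is precisely \Cref{lem:circuit-T-indep}, whose hypothesis---no circuit in $\supp(x^{(t)})$ meeting $N$---is exactly what the $n$ \textsc{Support-Circuit} calls at the start of each phase enforce. Your write-up gestures at ``a proximity lemma expressed in $\kappa_A$'' and says \textsc{Support-Circuit} ``keeps the support small,'' but the structural precondition (clearing all circuits of $\supp(x^{(t)})$ through $N$) and the rank argument are the load-bearing ideas, and without them the quantitative claim that a new coordinate is identifiable after $O(n\log(n+\kappa_A))$ ratio-circuit calls does not follow.
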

 Such problems typically arise in Phase I of the Simplex method when we add auxiliary variables in order to find a feasible solution. 
 The algorithm is presented in Section~\ref{sec:feas}.  The analysis extends that of Theorem~\ref{thm:circuit-diameter}, tracking large coordinates $x_i^{(t)}$.
Our second result considers general optimization:
\begin{theorem}\label{thm:opt-augment}
Consider an LP of the form \eqref{sys:lp}. There exists a circuit augmentation algorithm that finds an optimal solution or concludes unboundedness using $O(mn^2\log(n+\kappa_A))$  \textsc{Ratio-Circuit} and  $(m+1)n^2$ \textsc{Support-Circuit} augmentation steps. 
\end{theorem}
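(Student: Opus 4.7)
The plan is to reduce general LP optimization to at most $n$ sequential invocations of the feasibility-style algorithm from \Cref{thm:feasible-augment}, each of which either concludes the problem or permanently fixes one more coordinate. Since each inner call contributes $O(mn\log(n+\kappa_A))$ \textsc{Ratio-Circuit} and $(m+1)n$ \textsc{Support-Circuit} augmentations, running the outer loop $n$ times gives the claimed $O(mn^2\log(n+\kappa_A))$ and $(m+1)n^2$ bounds.

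Concretely, I would maintain a feasible iterate $x^{(t)}$ together with an index set $F \subseteq [n]$ of \emph{fixed} coordinates, initially $F = \emptyset$. Each phase proceeds by calling \textsc{Ratio-Circuit}$(A, c, w)$ with weights $w_i = 1/x_i^{(t)}$ (and $w_i = \infty$ for $i \in F$ that have been frozen at $0$). This is the Wallacher/McCormick--Shioura minimum-ratio rule, which shrinks $\pr{c}{x^{(t)}}-\OPT$ by a factor $1-1/n$ per step; if \textsc{Ratio-Circuit} ever returns an unbounded direction the LP is unbounded and we halt. Within each phase the analysis tracks indices in exactly the same spirit as the sets $L_t$ and $R_t$ in the proof of \Cref{thm:circuit-diameter}: conformal circuit decomposition of $x^*-x^{(t)}$ into elementary vectors, combined with the definition of $\kappa_A$, forces any coordinate with sufficiently large value (relative to the current gap) to be provably close to its value in some optimal vertex. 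The \textsc{Support-Circuit} steps are used exactly as in \Cref{thm:feasible-augment} to keep the support of the iterate small and restore a vertex structure when needed, contributing at most $(m+1)n$ per phase.

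The phase terminates either by driving the optimality gap to zero (in which case we are done) or by identifying a new index $i \notin F$ which can be frozen: either at zero because its contribution to the gap has become negligible against the $\kappa_A$-based proximity threshold, or as a ``guaranteed active'' coordinate whose non-negativity constraint becomes redundant for the remainder of the execution. In either case $|F|$ strictly increases, so the outer loop runs at most $n$ times. The bookkeeping parallels that of \Cref{thm:feasible-augment}, which already handles the scenario $c = (\0,\1_N)$ in a single sweep; the optimization case corresponds to interleaving such sweeps with objective-cost shifts as coordinates get fixed.

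The main obstacle is the per-phase proximity/fixing lemma: we must prove that $O(mn\log(n+\kappa_A))$ \textsc{Ratio-Circuit} steps always yield genuine combinatorial progress, i.e., at least one new index added to $F$, unless the problem has already been solved. Calibrating the fixing threshold against $\kappa_A$, so that the exponential geometric decay of the gap certifies that some coordinate is within $1/\kappa_A$ of its optimal value, is the delicate part, as is arguing that frozen coordinates remain correctly classified under subsequent ratio-circuit and support-circuit steps. A secondary technical point is showing that removing a ``guaranteed active'' coordinate from the nonnegativity constraints does not break the elementary-vector structure used by \textsc{Ratio-Circuit}, which follows because restricting to a coordinate subspace preserves $\kappa_A$ only up to a polynomial factor and thus keeps the $\log(n+\kappa_A)$ factor in the iteration bound intact.
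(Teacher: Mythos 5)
Your high-level count is right — an outer loop of at most $n$ invocations of an inner routine costing $O(mn\log(n+\kappa_A))$ ratio-circuit and $(m+1)n$ support-circuit steps does give the claimed bounds — but the inner routine you describe is not the one that makes the argument close, and two of the ideas you propose are either missing or would not work.

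The central gap is in how a phase actually certifies that a coordinate can be fixed. You suggest that the geometric decay of the primal gap lets you conclude some coordinate is ``within $1/\kappa_A$ of its optimal value,'' by the same $L_t$, $R_t$ bookkeeping as in \Cref{thm:circuit-diameter}. But that analysis is not algorithmic: it decomposes $x^* - x^{(t)}$ and tracks proximity to the \emph{known} target $x^*$, which the optimization algorithm does not have. What the paper actually does is dual: after $O(n\log(n+\kappa_A))$ ratio-circuit steps it reaches a primal--dual optimal pair for a \emph{perturbed} cost $\tilde c^{(k)}$ (obtained by truncating small dual slacks to zero at threshold $\delta$), and then invokes the proximity result \Cref{thm:fixing} to conclude $x^*_j = 0$ for all $j$ with dual slack above $(m+1)\kappa_A\|c-c'\|_\infty$. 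This requires first projecting $c$ onto $\ker(A)$ and normalizing $\|c\|_2 = 1$, which via \Cref{lem:big-slack} is what guarantees the resulting set $N$ is nonempty — a Tardos-style normalization your sketch does not mention and which the argument cannot do without. Your proposal never produces a dual certificate and never perturbs the cost, so there is no mechanism by which ``large value relative to the current gap'' becomes a provable statement about \emph{every} optimal solution.

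A second problem is your alternative fixing mode, freezing a coordinate as ``guaranteed active'' and dropping its nonnegativity constraint. The paper never does this — \textsc{Variable-Fixing} only fixes variables to zero — and it would need a separate argument. Your justification (``restricting to a coordinate subspace preserves $\kappa_A$ up to a polynomial factor'') does not apply: removing the inequality $x_i \geq 0$ is not a restriction to a coordinate subspace of $\ker(A)$, it changes the polyhedron and the relevant elementary-vector structure. Finally, calling the inner routine ``the feasibility-style algorithm from \Cref{thm:feasible-augment}'' is a mischaracterization; \Cref{alg:circ_aug_opt} is a distinct procedure that additionally maintains a sequence of modified costs $\tilde c^{(1)}, \tilde c^{(2)}, \ldots$, tracks phases by rank increases of a set $\cL_t$, and must control accumulated cost perturbation (\Cref{clm:cost-perturb}, \Cref{clm:cost-norm}) across phases. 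These are not small technicalities; they are where the correctness lives.
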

The proof is given in Section~\ref{sec:opt}. The main subroutine  identifies a new index $i\in[n]$ such that $x^{(t)}_i=0$ in the current iteration and $x^*_i=0$ in an optimal solution; we henceforth fix this variable to 0.  To derive this conclusion, at the end of each phase the current iterate $x^{(t)}$  will be optimal to \eqref{sys:lp} with a slightly modified cost function $\tilde c$; the conclusion follows using a proximity argument
(\Cref{thm:fixing}). The overall algorithm repeats this subroutine $n$ times.  The subroutine is reminiscent of the feasibility algorithm (\Cref{thm:feasible-augment}) with the following main difference:  whenever we identify a new `large' coordinate, we slightly perturb the cost function.

\paragraph{Comparison to black-box LP approaches}  An important milestone towards strongly polynomial linear programming was Tardos's 1986 paper \cite{Tardos86} on solving \eqref{sys:lp} in time poly$(n,m,\log\Delta_A)$, where $\Delta_A$ is the maximum subdeterminant of $A$. Her algorithm makes  $O(nm)$ calls to a weakly polynomial LP solver for instances with small integer capacities and
costs, and uses proximity arguments to gradually learn the support of an optimal solution. This approach was extended to the real model of computation for a poly$(n,m,\log\kappa_A)$ bound \cite{DadushNV20}. The latter result uses proximity arguments with circuit imbalances $\kappa_A$, and eliminates all dependence on bit-complexity. 

The proximity tool \Cref{thm:fixing} derives from \cite{DadushNV20}, and our circuit augmentation algorithms are inspired by the feasibility and optimization algorithms in this paper. However, using circuit augmentation oracles instead of an approximate LP oracle changes the setup. Our arguments become simpler since we proceed through a sequence of feasible solutions, whereas much effort in \cite{DadushNV20} is needed to deal with infeasibility of the solutions returned by the approximate solver. On the other hand, we need to be more careful as all steps must be implemented using circuit augmentations in the original system, in contrast to the higher degree of freedom in \cite{DadushNV20} where we can make approximate solver calls to arbitrary modified versions of the input LP.

\paragraph{Organization of the paper} The rest of the paper is organized as follows. 
We first provide the necessary preliminaries in \Cref{sec:prelim}.
In \Cref{sec:diam-bound}, we upper bound the circuit diameter of \eqref{sys:polytope}.
In \Cref{sec:capacity}, this bound is extended to the setting with upper bounds on the variables.
Then, we develop circuit-augmentation algorithms for solving \eqref{sys:lp}.
In particular, \Cref{sec:feas} contains the algorithm for finding a feasible solution, whereas \Cref{sec:opt} contains the algorithm for solving \eqref{sys:lp} given an initial feasible solution. \Cref{sec:gen-circuits} shows how circuits in LPs of more general forms can be reduced to the notion used in this paper.

\section{Preliminaries} \label{sec:prelim}
Let $[n]=\{1,2,\ldots,n\}$.
Let $\R_+$ and $\R_{++}$ be the set of nonnegative and positive real numbers respectively.
For $\alpha\in\R$, we denote $\alpha^+=\max\{0,\alpha\}$ and $\alpha^-=\max\{0,-\alpha\}$. 
For a vector $z \in \R^n$, we define $z^+,z^-\in\R^n$ as $(z^+)_i=(z_i)^+$, $(z^-)_i=(z_i)^-$ for $i\in [n]$. 
For $z\in\R^n$, we let $\supp(z)=\{i\in [n]: z_i\neq 0\}$ denote its support, and $1/z\in(\R\cup\{\infty\})^n$ denote the vector $(1/z_i)_{i\in [n]}$.
 We use
$\|\cdot\|_p$ to denote the $\ell_p$-norm; we simply
write $\|\cdot\|$ for $\|\cdot\|_2$.  For $A\in\R^{m\times n}$ and $S\subseteq [n]$, we let $A_S\in \R^{m\times |S|}$ denote the submatrix corresponding to columns $S$. 
We denote $\rk(S):= \rk(A_S)$, i.e., the rank of the set $S$ in the linear matroid associated with $A$.
A \emph{spanning subset of $S$ is a subset $T\subseteq S$ such that $\rk(T) = \rk(S)$.}
The \emph{closure} of $S$ is defined as $\cl(S):= \set{i\in [n]:\rk(S\cup\{i\}) = \rk(S)}$.
The dual linear program of \eqref{sys:lp} is 
\begin{equation}\tag{DLP}
  \label{sys:dual_lp}
  \max \; \pr{b}{y}  \quad \mathrm{s.t.}\quad
  A^\top y + s=c\, , \, 
  s \geq \0\, .
\end{equation}
Note that $y$ uniquely determines $s$, and due to the assumption $\rk(A) =m$, $s$ also uniquely determines $y$. %  
For this reason, given a dual feasible solution $(y,s)$, we can just focus on $y$ or $s$.

For $A\in \R^{m\times n}$, let $W=\ker(A)$.
Recall that $\mathcal{C}(W) = \mathcal{C}(A)$ and $\EE(W)= \EE(A)$ are the set of circuits and elementary vectors in $W$ respectively.
Note that every circuit has size at most $m+1$ because we assumed that $\rk(A) = m$.
The \emph{circuit imbalance measure} of $W$ is defined as 
\[\kappa_W \coloneqq \kappa_A \coloneqq \max_{g\in \EE(W)} \set{\frac{|g_i|}{|g_j|}:i,j\in \supp(g)}\]
if $W\neq \{\0\}$. Otherwise, it is defined as $\kappa_W\coloneqq \kappa_A\coloneqq 1$. For a linear space $W\subseteq \R^n$, let $W^\perp$ denote the orthogonal complement. Thus, for $W=\ker(A)$, $W^\perp=\im(A^\top)$.
According to the next lemma, circuit imbalances are self-dual.
\begin{lemma}[{\cite{DHNV20}}]\label{lem:duality}
For a linear space $W\subseteq\R^n$, we have $\kappa_W=\kappa_{W^\perp}$. %
\end{lemma}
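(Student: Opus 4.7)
The plan is to reduce the statement to matroid duality, using the fact that the supports of elementary vectors of $W$ and of $W^\perp$ are precisely the circuits and the cocircuits of the linear matroid $\mathcal{M}(A)$ on $[n]$, respectively. By symmetry ($W^{\perp\perp}=W$), it suffices to show the one-sided inequality $\kappa_{W^\perp}\ge \kappa_W$; equality then follows by applying the same argument to $W^\perp$.

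To produce a witness on the dual side, I would start from an elementary vector $g\in\EE(W)$ and indices $i,j\in C:=\supp(g)$ realizing (or approximating from below) $\kappa_W = |g_i|/|g_j|$. Since $C$ is a circuit of $\mathcal{M}(A)$, the set $C\setminus\{i\}$ is independent; extend it to a basis $B$ of $\mathcal{M}(A)$. Then $i\notin B$ (otherwise $C\subseteq B$ would contradict independence of $B$), while $j\in B$. Consider the fundamental cocircuit $D_j:=\{j\}\cup\{e\in[n]\setminus B : j\in C(B,e)\}$, where $C(B,e)$ denotes the fundamental circuit of $e\notin B$ with respect to $B$. This $D_j$ is a cocircuit of $\mathcal{M}(A)$, i.e., the support of some elementary vector $h\in\EE(W^\perp)$.

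The crux of the argument is to compute $C\cap D_j$ exactly. The unique element of $C$ outside $B$ is $i$, and $C(B,i)$ is the unique circuit inside $B\cup\{i\}$, which must be $C$ itself. All other elements of $C$ lie in $B$, hence cannot belong to $D_j$ except for $j$ itself. Thus $C\cap D_j=\{i,j\}$. Now the orthogonality $\langle g,h\rangle=0$, combined with this intersection pattern, collapses to $g_i h_i + g_j h_j = 0$, giving $|h_j|/|h_i|=|g_i|/|g_j|$. Since $i,j\in\supp(h)$, this yields $\kappa_{W^\perp}\ge |g_i|/|g_j|$, and taking the supremum over $g,i,j$ completes the proof.

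The main obstacle, though mild, is the combinatorial verification that $C\cap D_j=\{i,j\}$: one has to be careful not to conflate the matroid on $W$ with that on $W^\perp$, and one needs the standard dual-matroid identity that the fundamental cocircuit of $j\in B$ meets the fundamental circuit of $i\notin B$ exactly in $\{i,j\}$ whenever $j\in C(B,i)$. Once this identity is in hand, the rest of the proof is just a single use of $g\perp h$ together with the symmetry $(W^\perp)^\perp=W$ to lift the inequality to equality.
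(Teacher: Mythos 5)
The paper states this lemma as a citation to \cite{DHNV20} and gives no proof of its own. Your argument is correct and is essentially the standard proof used there: for a circuit $C=\supp(g)$ and $i,j\in C$, one produces a cocircuit $D$ with $C\cap D=\{i,j\}$ (you realize it as the fundamental cocircuit of $j$ in a basis $B$ extending $C\setminus\{i\}$, and your verification that the intersection is exactly $\{i,j\}$ is careful and sound), after which orthogonality of the corresponding elementary vectors $g\in\EE(W)$ and $h\in\EE(W^\perp)$ collapses to $g_ih_i+g_jh_j=0$, yielding $\kappa_{W^\perp}\ge\kappa_W$, and the symmetry $(W^\perp)^\perp=W$ upgrades this to equality.
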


For $P$ as in \eqref{sys:polytope}, $x \in P$ and an elementary vector $g \in \EE(A)\setminus \R^n_+$, we let $\aug_P(x, g) := x + \alpha g$ where $\alpha = \max\{\bar \alpha : x + \bar \alpha g \in P\}$. 

\begin{definition}\cite{bookDeLoera}\label{def:conformal}
  We say that $x,y\in\R^n$ are \emph{sign-compatible} if $x_iy_i\geq 0$ for all $i\in[n]$. %
  We write $x\sqsubseteq y$ if they are sign-compatible and further $|x_i|\le|y_i|$ for all $i\in [n]$.  
  For a linear space $W\subseteq \R^n$ and $x\in W$, a \emph{conformal circuit decomposition} of $x$ is a set of elementary vectors $h^{(1)},h^{(2)},\dots,h^{(k)}$ in $W$ such that $x=\sum_{j=1}^k h^{(j)}$, $k\le \dim(W)$, and $h^{(j)}\sqsubseteq x$ for all $j\in [k]$.
\end{definition}

The following lemma shows that every vector in a linear space has a conformal circuit decomposition. It is a simple corollary of the Minkowski--Weyl and Carath\'eodory theorems.
\begin{lemma}\label{lem:conformal}
For a linear space $W\subseteq \R^n$,  every $x\in W$ has a conformal circuit decomposition $x = \sum_{j=1}^k h^{(j)}$ such that $k \le \min\{\dim(W), |\supp(x)|\}$.
\end{lemma}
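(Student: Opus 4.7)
The plan is to exhibit the desired decomposition as the output of Carath\'eodory's theorem applied to a carefully chosen polyhedral cone. Define
\[
C \;=\; \set{ y \in W : \supp(y)\subseteq \supp(x) \text{ and } y_i x_i \ge 0 \text{ for all } i\in[n] },
\]
i.e., the cone of vectors in $W$ that are sign-compatible with $x$ and have the same zero-pattern restrictions forced by $\sqsubseteq$. This is a pointed polyhedral cone (pointedness is immediate from the sign constraints), and clearly $x\in C$.

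The first key step is to identify the extreme rays of $C$. I claim these are precisely the positive scalar multiples of elementary vectors of $W$ that lie in $C$. If $y\in C$ is not a multiple of an elementary vector, then by definition of $\EE(W)$ there exists $y'\in W\setminus\{\0\}$ with $\supp(y')\subsetneq \supp(y)$; for sufficiently small $\eps>0$ both $y+\eps y'$ and $y-\eps y'$ remain in $C$ (signs on $\supp(y)$ are preserved and coordinates outside $\supp(y)$ remain zero), yielding a nontrivial decomposition $y=\tfrac12(y+\eps y')+\tfrac12(y-\eps y')$, so $y$ is not an extreme ray. Conversely, if $g\in C$ is elementary and $g=y_1+y_2$ with $y_i\in C$, then $\supp(y_i)\subseteq\supp(g)$ (forced by sign-compatibility), and support-minimality of $g$ in $W$ implies every element of $W$ supported on $\supp(g)$ is a scalar multiple of $g$; since $y_1,y_2\in C$ are sign-compatible with $g$, these scalars are nonnegative, so $g$ is extreme.

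Next, by the Minkowski--Weyl theorem $C$ equals the conic hull of its extreme rays, so $x=\sum_{j}\alpha_j g^{(j)}$ with $\alpha_j\ge 0$ and $g^{(j)}\in\EE(W)\cap C$. Applying the conic version of Carath\'eodory's theorem, we may assume the chosen extreme rays are linearly independent, so the number of terms is at most $\dim(\spn(C))$. Since $\spn(C)\subseteq W$ and $\spn(C)$ consists of vectors with support contained in $\supp(x)$, we obtain
\[
\dim(\spn(C)) \;\le\; \min\set{\dim(W),\,|\supp(x)|}.
\]
Setting $h^{(j)}:=\alpha_j g^{(j)}$ gives elementary vectors summing to $x$; sign-compatibility of all $h^{(j)}$ with $x$, combined with $\sum_j h^{(j)}_i=x_i$, forces $|h^{(j)}_i|\le|x_i|$ coordinatewise, so $h^{(j)}\sqsubseteq x$ as required.

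The only delicate point is the extreme-ray characterization; everything else is a direct application of standard convex geometry. I do not foresee an obstacle beyond that, since sign-compatibility guarantees no cancellation occurs in the sum and hence the $\sqsubseteq$ relation is automatic from a conic decomposition into sign-compatible rays.
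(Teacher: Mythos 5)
Your proof is correct and is precisely the ``simple corollary of Minkowski--Weyl and Carath\'eodory'' that the paper alludes to without spelling out: the paper gives no proof, and your argument correctly fills in the details. The two delicate points---the characterization of extreme rays of the sign-compatible cone as the elementary vectors lying in it, and the observation that the magnitude bound $|h^{(j)}_i|\le|x_i|$ is automatic once all summands are sign-compatible with $x$ and sum to $x$---are both handled properly, and the bound $k\le\dim(\spn(C))\le\min\{\dim(W),|\supp(x)|\}$ follows exactly as you argue.
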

\subsection{Circuit oracles}
In Sections~\ref{sec:capacity}, \ref{sec:feas}, and \ref{sec:opt}, we use a simple circuit finding subroutine \textsc{Support-Circuit}$(A,c,x,S)$ that will be used to identify circuits in the support of a solution $x$.  This can be implemented easily using Gaussian elimination.
Note that the constraint $\pr{c}{z}\leq 0$ is superficial as $-z$ is also an elementary vector for every elementary vector $z$.

\begin{subroutineBox}[label={subroutine:support-circuit}]{\textsc{Support-Circuit}$(A,c,x,S)$}
  For a  matrix $A\in\R^{m\times n}$,  vectors $c,x\in \R^n$ and $S\subseteq [n]$, the output is an elementary vector $z\in\EE(A)$ with  $\supp(z)\subseteq \supp(x)$, $\supp(z)\cap S\neq \emptyset$ with $\pr{c}{z}\le 0$, or concludes that no such elementary vector exists. 
\end{subroutineBox}

\medskip

The circuit augmentation algorithms in Sections~\ref{sec:feas} and \ref{sec:opt} will use the 
 subroutine \textsc{Ratio-Circuit}$(A,c,w)$.

\begin{subroutineBox}[label={subroutine:ratio-circuit}]{\textsc{Ratio-Circuit}$(A,c,w)$}
  For a matrix $A\in\R^{m\times n}$ and vectors $c\in\R^n$, $w\in (\R_{++}\cup\{\infty\})^n$, the output is a basic optimal solution
  to the system:
  \begin{equation}
    \label{sys:minratio}
    \min \; \pr{c}{z}\, \quad \mathrm{s.t.}\quad
    Az =\0\, , \, 
    \pr{w}{z^-}  \leq 1\, ,
    \end{equation} 
and an optimal solution to the following dual program:
  \begin{equation}
  \label{sys:minratio-dual}
  \max \; -\lambda \quad \mathrm{s.t.}\quad A^\top y+ s=c\, \quad
 \0\le s\le \lambda w
  \end{equation}
\end{subroutineBox}

Note that \eqref{sys:minratio} can be reformulated as an LP using additional variables, and its dual LP can be equivalently written as \eqref{sys:minratio-dual}. 
Recall that we use the convention $w_iz_i = 0$ if $w_i = \infty$ and $z_i = 0$ in \eqref{sys:minratio}.
The opposite convention is used in \eqref{sys:minratio-dual}, i.e., $\lambda_i w_i = \infty$ if $\lambda = 0$ and $w_i = \infty$.
If \eqref{sys:minratio} is bounded, then a basic optimal solution is either $\0$ or an elementary vector $z\in\EE(A)$ that minimizes $\pr{c}{z}/\pr{w}{z^-}$. 
Moreover, observe that every feasible solution to $\eqref{sys:minratio-dual}$ is also feasible to \eqref{sys:dual_lp}.

We will use the following lemma, a direct consequence of \cite[Lemma~4.3]{DNV20}.
\begin{lemma}\label{cor:construct_hoffman}
Given $A\in\R^{m\times n}$, $W=\ker(A)$, $\ell\in (\R\cup\{-\infty\})^n$ and $u\in (\R\cup \{\infty\})^n$, let $r\in W$ such that $\ell \leq r\leq u$. In $\poly(m,n)$ time, we can find a vector $r'\in W$ such that $\ell \leq r'\leq u$ and $\|r'\|_\infty \leq \kappa_A \|\ell^+ + u^-\|_1$.
\end{lemma}
This lemma, together with \Cref{lem:duality}, allows us to 
 assume that the optimal dual solution $s$ returned by {\sc Ratio-Circuit} satisfies 
\begin{equation}\label{eq:minratio_dual_prox}
  \|s\|_\infty \leq 2\kappa_A \|c\|_1.
\end{equation}

To see this, let $(y,s,\lambda)$ be an optimal solution to \eqref{sys:minratio-dual}.
We know that $-c \leq s-c \leq \lambda w - c$.
Let $\ell \coloneqq -c$, $r \coloneqq s-c$ and $u \coloneqq \lambda w - c$.
By \Cref{cor:construct_hoffman}, we can in $\poly(m,n)$ time compute $r'\in W^\perp$ such that $\ell \leq r'\leq u$ and
\[\|r'\|_\infty \leq \kappa_{W^\perp} \|\ell^+ + u^-\|_1 \leq \kappa_{W^\perp} \|c^- + c^+\|_1  = \kappa_{W^\perp} \|c\|_1 .\]
Then, $s'\coloneqq r'+c$ is an optimal solution to \eqref{sys:minratio-dual} which satisfies 
\[\|s'\|_\infty \leq \|r'\|_\infty + \|c\|_\infty \leq (\kappa_{W^\perp}+1)\|c\|_1 \leq 2\kappa_{W^\perp}\|c\|_1.\]
Thus, \eqref{eq:minratio_dual_prox} follows using \Cref{lem:duality}, since $\kappa_{W^\perp}=\kappa_W=\kappa_A$.

The following lemma is well-known, see e.g., \cite[Lemma 2.2]{mccormick-shioura-not-strongly}.

\begin{lemma}\label{lem:progress}
Let $\OPT$ be the optimal value of \eqref{sys:lp}, and assume that it is finite.
Given a feasible solution $x$ to \eqref{sys:lp}, let $g$ be the optimal solution to \eqref{sys:minratio} returned by {\sc Ratio-Circuit}$(A,c,1/x)$. 
\begin{enumerate}[label=(\roman*)]
  \item If $\pr{c}{g} = 0$, then $x$ is optimal to \eqref{sys:lp}.
  \item If $\pr{c}{g}<0$, then letting $x'=\aug_P(x, g)$, we have $\alpha\geq 1$ for the augmentation stepsize and
  \[\pr{c}{x'}-\OPT\le \left(1-\frac{1}{|\supp(x)|}\right)\left(\pr{c}{x}-\OPT\right)\, .\]
\end{enumerate}
\end{lemma}
\begin{proof}
We only prove (ii) because (i) is trivial.
The stepsize bound $\alpha\ge 1$ follows since $\pr{1/x}{g^-}\le 1$; thus, $x+g\in P$.
Let $x^*$ be an optimal solution to \eqref{sys:lp}, and let $z=(x^*-x)/|\supp(x)|$. 
Note that $g\ngeq \0$, as otherwise \eqref{sys:minratio} is unbounded. So, $x\neq \0$.
Then, $z$ is feasible to 
\eqref{sys:minratio} for $w=1/x$. Therefore,
\[
\alpha\pr{c}{g}\le \pr{c}{g}\le \pr{c}{z}=\frac{\OPT-\pr{c}{x}}{|\supp(x)|}\, ,
\]
implying the second claim.
\end{proof}

\begin{remark}
It is worth noting that \Cref{lem:progress} shows that applying \textsc{Ratio-Circuit} to vectors $x$ with small support gives better convergence guarantees. \Cref{alg:feas,alg:circ_aug_opt} for feasibility and optimization in \Cref{sec:feas,sec:opt} apply \textsc{Ratio-Circuit} to vectors $x$ which have large support $|\supp(x)| = \Theta(n)$ in general. These algorithms could be reformulated in that one first runs \textsc{Support-Circuit} to reduce the size of the support to size $O(m)$ and only then runs \textsc{Ratio-Circuit}. The guarantees of \Cref{lem:progress} now imply that to reduce the optimality gap by a constant factor we would replace $O(n)$ calls to \textsc{Ratio-Circuit} with only $O(m)$ calls. On the other hand, this comes at the cost of $n$ additional  calls to \textsc{Support-Circuit} for every call to \textsc{Ratio-Circuit}. 
\end{remark}

\subsection{A norm bound}
We now formulate a proximity bound asserting that if the columns of $A$ outside $N$ are linearly independent, then we can bound the $\ell_\infty$-norm of any vector in $\ker(A)$ by the norm of its coordinates in $N$. This can be seen as a special case of Hoffman-proximity results; see Section~\ref{sec:proximity} for more such results and references.
\begin{lemma}\label{lem:circuit-N-kappa}
For $A\in\R^{m\times n}$, let $N\subseteq[n]$ such that $A_{[n]\setminus N}$ has full column rank. Then, for any $z\in \ker(A)$, we have $\|z\|_\infty\le \kappa_A\|z_N\|_1$.
\end{lemma}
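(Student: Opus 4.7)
The plan is to prove this bound via a conformal circuit decomposition of $z$, exploiting the matroidal fact that because $B := [n]\setminus N$ is independent (since $A_B$ has full column rank), no circuit of $\mathcal{M}(A)$ is contained in $B$; equivalently, every elementary vector in $\EE(A)$ has at least one nonzero coordinate in $N$.

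First I would apply the conformal circuit decomposition lemma stated earlier to write $z = \sum_{j=1}^{k} h^{(j)}$, where each $h^{(j)} \in \EE(A)$ is sign-compatible with $z$, and $|h^{(j)}_i|\le |z_i|$ for all $i\in [n]$. The key consequence of sign-compatibility that I will use repeatedly is that for every coordinate $i$, the scalars $h^{(1)}_i,\dots,h^{(k)}_i$ all share the sign of $z_i$, so $|z_i|=\sum_{j=1}^{k}|h^{(j)}_i|$.

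Next, for each $j\in [k]$ I would pick an index $\ell_j \in \supp(h^{(j)})\cap N$, which is nonempty by the opening observation. By definition of the circuit imbalance, $|h^{(j)}_i|\le \kappa_A\,|h^{(j)}_{\ell_j}|$ for every $i\in [n]$ (trivially if $i\notin\supp(h^{(j)})$, and by the definition of $\kappa_A$ otherwise). Summing over $j$ and using conformality yields, for any fixed $i\in [n]$,
\[
|z_i| \;=\; \sum_{j=1}^{k}|h^{(j)}_i| \;\le\; \kappa_A \sum_{j=1}^{k} |h^{(j)}_{\ell_j}|.
\]

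Finally, I would bound the right-hand side by $\kappa_A\|z_N\|_1$. Since each $\ell_j\in N$ and the $h^{(j)}$'s are sign-compatible with $z$,
\[
\sum_{j=1}^{k} |h^{(j)}_{\ell_j}| \;\le\; \sum_{j=1}^{k}\sum_{\ell\in N}|h^{(j)}_\ell| \;=\; \sum_{\ell\in N}\sum_{j=1}^{k}|h^{(j)}_\ell| \;=\; \sum_{\ell\in N}|z_\ell| \;=\; \|z_N\|_1,
\]
which gives $|z_i|\le \kappa_A\|z_N\|_1$ for all $i$, as required. There is no real obstacle here; the only care needed is to invoke conformality twice (once per coordinate to get the telescoping of absolute values, and once globally to bound the sum of $|h^{(j)}_{\ell_j}|$ by $\|z_N\|_1$).
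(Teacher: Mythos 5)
Your proof is correct and follows essentially the same route as the paper's: a conformal circuit decomposition of $z$, the observation that every elementary vector meets $N$ because $A_{[n]\setminus N}$ has full column rank, the $\kappa_A$-bound on each piece against a coordinate in $N$, and one more use of conformality to sum those up to $\|z_N\|_1$. The only cosmetic difference is that you work coordinate-by-coordinate to get $|z_i|=\sum_j|h^{(j)}_i|$ while the paper bounds $\|z\|_\infty\le\sum_t\|h^{(t)}\|_\infty$ directly; the substance is identical.
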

\begin{proof}
Let $h^{(1)},\ldots, h^{(k)}$ be a conformal circuit decomposition of $z$. 
Then, $\|z\|_\infty\leq \sum_{t=1}^k \|h^{(t)}\|_\infty$.
For each $h^{(t)}$, we have $\supp(h^{(t)})\cap N\neq\emptyset$ because $A_{[n]\setminus N}$ has full column rank.
Hence, $\|h^{(t)}\|_\infty\le \kappa_A |h^{(t)}_{j(t)}|$ for some $j(t)\in N$. 
Conformality implies that
\[\sum_{t=1}^k|h^{(t)}_{j(t)}| = \sum_{s\in N}\sum_{j(t) = s}|h^{(t)}_{j(t)}| \leq \sum_{s\in N}|z_s| = \|z_N\|_1.\]
The lemma follows by combining all the previous inequalities.
\end{proof}

\subsection{Estimating circuit imbalances}
The circuit augmentation algorithms in Sections~\ref{sec:feas} and \ref{sec:opt} explicitly use the circuit imbalance measure $\kappa_A$. However, this is NP-hard to approximate within a factor $2^{O(n)}$, see \cite{Tuncel1999,DHNV20}. We circumvent this problem using a standard guessing procedure, see e.g., \cite{Vavasis1996,DHNV20}. Instead of $\kappa_A$, we use an estimate $\hat\kappa$, initialized as $\hat\kappa=n$. Running the algorithm with this estimate either finds the desired feasible or optimal solution (which one can verify), or fails. In case of failure, we conclude that $\hat\kappa<\kappa_A$, and replace $\hat\kappa$ by $\hat\kappa^2$. 
Since the running time of the algorithms is linear in $\log(n+\hat\kappa)$, the running time of all runs will be dominated by the last run, giving the desired bound.
For simplicity, the algorithm descriptions use the explicit value $\kappa_A$.

\section{The Circuit Diameter Bound}\label{sec:diam-bound}

In this section, we show Theorem~\ref{thm:circuit-diameter}, namely the bound $O(m\min\{m,n-m\}\log(m+\kappa_A))$ on the circuit diameter of a polyhedron in standard form \eqref{sys:polytope}.
As outlined in the Introduction, let $B\subseteq [n]$ be a feasible basis and $N=[n]\setminus B$ such that $x^*=(A_B^{-1}b,\0_N)$ is a basic solution to \eqref{sys:lp}. 
We can assume $n\le 2m$: the union of the supports of the starting vertex $x^{(0)}$ and the target vertex $x^*$ is at most $2m$; we can fix all other variables to 0. 
Defining $\tilde n := |\supp(x^*) \cup \supp(x^{(0)})| \le 2m$ and restricting $A$ to these columns, we show a circuit diameter bound
$O(\tilde n(\tilde n-m)\log(m+\kappa_A))$. This implies
\Cref{thm:circuit-diameter} for general $n$.
In the rest of this section, we use $n$ instead of $\tilde n$, but assume $n\le 2m$.
The simple `shoot towards the optimum' procedure is shown in
 \Cref{proc:diam-bound}. 

\begin{algorithm}[htb!]
  \caption{\textsc{Diameter-Bound}}
  \label{proc:diam-bound}
  \SetKwInOut{Input}{Input}
  \SetKwInOut{Output}{Output}
  \SetKwComment{Comment}{$\triangleright$\ }{}
  \SetKw{And}{\textbf{and}}
  \SetKw{Or}{\textbf{or}}
  \Input{Polyhedron in standard form \eqref{sys:polytope}, basis $B\subseteq [n]$ with its corresponding vertex $x^* = (A^{-1}_Bb, \0_N)$, and initial vertex $x^{(0)}$.}
  \Output{Length of a circuit walk from $x^{(0)}$ to $x^*$.}
  $t\gets 0$\;
  \While{$x^{(t)}\neq x^*$}{
    Let $h^{(1)},h^{(2)},\dots,h^{(k)}$ be a conformal circuit decomposition of $x^* - x^{(t)}$\;
    $g^{(t)}\gets h^{(j)}$ for any $j\in \argmax_{i\in [k]}\|h^{(i)}_N\|_1$\;
    $x^{(t+1)} \gets \aug_P(x^{(t)}, g^{(t)})$; $t\gets t+1$  \; 
    }
  \Return $t$ \;
\end{algorithm}

A priori, even finite termination is not clear. 
First, we show that the `cost' $\|x^{(t)}_N\|_1$ decreases geometrically.
It is a consequence of choosing the most improving circuit $g^{(t)}$ in each iteration.

\begin{lemma}\label{lem:geo-decay}
For every iteration $t\ge 0$, we have $\|x^{(t+1)}_N\|_1 \leq (1-\frac{1}{n-m})\|x_N^{(t)}\|_1$.
Furthermore, $|x_i^{(t+1)} - x_i^{(t)}| \le (n - m) |x_i^* - x_i^{(t)}|$ for all $i \in [n]$.
\end{lemma}

\begin{proof}
Let $h^{(1)}, \ldots, h^{(k)}$ with $k \le n - m$ be the conformal circuit decomposition of $x^* - x^{(t)}$ used in iteration $t$ of \Cref{proc:diam-bound}. 
Note that $h^{(i)}_N \le \0_N$ for all $i \in [k]$ because $x_N^* = \0_N$ and $x^{(t)} \ge \0$.
By our choice of $g^{(t)}$,
\[\|g_N^{(t)}\|_1 = \max_{i \in [k]} \|h^{(i)}_N\|_1 \ge \frac1k \sum_{i \in [k]} \|h^{(i)}_N\|_1 = \frac1k \|x_N^{(t)}\|_1\] 
where the last equality uses the conformality of the decomposition.
Let $\alpha^{(t)}$ be such that $x^{(t+1)} = x^{(t)} + \alpha^{(t)}g^{(t)}$. Clearly, $\alpha^{(t)}\ge 1$ because $x^{(t)}+g^{(t)}\in P$.
Hence,
\begin{equation*}
\begin{aligned}
\label{eq:circ_progress}
\|x^{(t+1)}_N\|_1& = \|x^{(t)}_N + \alpha^{(t)}g^{(t)}_N\|_1
\le \|x_N^{(t)} + g_N^{(t)}\|_1 \\
&=\|x_N^{(t)}\|_1- \| g_N^{(t)}\|_1\le \big(1-\frac1k\big) \|x_N^{(t)}\|_1\, .
\end{aligned}
\end{equation*}

Further, using $\0\le x_N^{(t+1)}\le x_N^{(t)}$,  we see that
\begin{equation*}
  \alpha^{(t)} = \frac{\|x_N^{(t+1)} - x_N^{(t)}\|_1}{\|g_N^{(t)}\|_1} \le  \frac{\|x_N^{(t)}\|_1}{\|g_N^{(t)}\|_1} \le k, 
\end{equation*} 
and so for all $i\in [n]$ we have 
$|x_i^{(t+1)} - x_i^{(t)}| = \alpha^{(t)} |g_i^{(t)}| \le k |g_i^{(t)}| \le k |x_i^* - x_i^{(t)}|
$.
\end{proof}

Our convergence proof is based on analyzing the following sets
\begin{equation*}
L_t:=\{i\in [n]:\, x_i^* > n\kappa_A \|x_N^{(t)}\|_1\}\, , \qquad T_t:=[n]\setminus L_t\, ,\qquad R_t:= \{i\in [n]:x^{(t)}_i \leq (n-m)x^*_i\}\, .
\end{equation*}
The set $L_t$ consists of indices $i$ where $x_i^*$ is much larger than the current `cost' $\|x_N^{(t)}\|_1$.
On the other hand, the set $R_t$ consists of indices $i$ where $x_i^{(t)}$ is not much above $x_i^*$. 
The next lemma shows that the sets $L_t$ and $R_t$ are monotonically growing.

\begin{lemma}\label{lem:I-monotone}
For every iteration $t\ge 0$, we have $L_t\subseteq L_{t+1}\subseteq B$ and $R_t\subseteq R_{t+1}$.
\end{lemma}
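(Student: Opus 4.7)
The plan is to handle the three inclusions separately, with Lemma~\ref{lem:geo-decay} as the only ingredient beyond the definitions. The inclusion $L_t \subseteq B$ is immediate: any $i \in L_t$ satisfies $x_i^* > n\kappa_A \|x_N^{(t)}\|_1 \ge 0$, and since $x_j^* = 0$ for every $j \in N$, this forces $i \in B$. The inclusion $L_t \subseteq L_{t+1}$ follows from the first part of Lemma~\ref{lem:geo-decay}, which gives $\|x_N^{(t+1)}\|_1 \le \|x_N^{(t)}\|_1$; hence the defining threshold $n\kappa_A \|x_N^{(t)}\|_1$ of $L_t$ is monotonically non-increasing in $t$, so membership in $L_t$ is preserved.

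The substantive inclusion is $R_t \subseteq R_{t+1}$. Fix $i \in R_t$, so $x_i^{(t)} \le (n-m) x_i^*$. The key extra ingredient is that $g^{(t)}$ is one of the pieces $h^{(j)}$ in the conformal circuit decomposition of $x^* - x^{(t)}$, so $g^{(t)} \sqsubseteq x^* - x^{(t)}$; in particular $g_i^{(t)}$ has the same sign as $(x^* - x^{(t)})_i$. I would split into two cases. If $x_i^{(t)} > x_i^*$, then $g_i^{(t)} \le 0$, and since $\alpha^{(t)} \ge 0$ we get $x_i^{(t+1)} \le x_i^{(t)} \le (n-m) x_i^*$. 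If instead $x_i^{(t)} \le x_i^*$, then $g_i^{(t)} \ge 0$, so $x_i^{(t+1)} \ge x_i^{(t)}$, and the second part of Lemma~\ref{lem:geo-decay} gives
\[
x_i^{(t+1)} \;\le\; x_i^{(t)} + (n-m)(x_i^* - x_i^{(t)}) \;=\; (n-m) x_i^* - (n-m-1) x_i^{(t)} \;\le\; (n-m) x_i^*,
\]
where the last step uses $x_i^{(t)} \ge 0$ and $n - m \ge 1$ (the edge case $n = m$ being degenerate). In both cases $i \in R_{t+1}$.

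I do not anticipate a real obstacle. The one thing to be careful about is that Lemma~\ref{lem:geo-decay} only supplies the \emph{absolute} bound $|x_i^{(t+1)} - x_i^{(t)}| \le (n-m)|x_i^* - x_i^{(t)}|$, which on its own does not prevent $x_i^{(t+1)}$ from overshooting $(n-m) x_i^*$ when $x_i^{(t)}$ is well below $x_i^*$. This is exactly why one must also use the \emph{direction} of movement, which the conformality $g^{(t)} \sqsubseteq x^* - x^{(t)}$ provides; the sign case split then cleanly turns the absolute bound into the one-sided bound we need.
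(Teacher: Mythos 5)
Your proof is correct and follows essentially the same route as the paper: conformality handles the case $x_i^{(t)} > x_i^*$, and the second part of Lemma~\ref{lem:geo-decay} handles the case $x_i^{(t)} \le x_i^*$. One small note on your closing remark, which has the emphasis reversed: it is the first case, $x_i^{(t)} > x_i^*$, where conformality is truly essential (there the absolute bound alone would only give $x_i^{(t+1)} \le x_i^{(t)} + (n-m)(x_i^{(t)} - x_i^*)$, which can far exceed $(n-m)x_i^*$), whereas in the second case, $x_i^{(t)} \le x_i^*$, the absolute bound already yields $x_i^{(t+1)} \le x_i^{(t)} + (n-m)(x_i^* - x_i^{(t)}) \le (n-m)x_i^*$ as your own computation shows, so the sign of $g_i^{(t)}$ is not actually needed there.
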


\begin{proof}
Clearly, $L_t \subseteq L_{t+1}$ as $\|x_N^{(t)}\|_1$ is monotonically decreasing by \Cref{lem:geo-decay}, and $L_t \subseteq B$ as $x_N^* = \0_N$.
Next, let $j \in R_t$.
If $x_j^{(t)} \ge x_j^*$, then $x_j^{(t+1)} \le x_j^{(t)}$ by conformality. 
If $x_j^{(t)} < x_j^*$, then $x^{(t+1)}_j \le x^{(t)}_j + (n -m) (x_j^* - x_j^{(t)}) \le (n - m)x_j^*$ by \Cref{lem:geo-decay}. 
In both cases, we conclude that $j \in R_{t+1}$.  
\end{proof}

Our goal is to show that $R_t$ or $L_t$ is extended within $O((n-m)\log(n+\kappa_A))$ iterations.
By the maximality of the augmentation, we know that at least one variable is set to zero in every iteration $t$. The following lemma shows that these variables do not lie in $L_t$.

\begin{lemma}\label{lem:zeroed-vars}
For every iteration $t\geq 0$, we have $\emptyset\neq \supp(x^{(t)})\setminus \supp(x^{(t+1)}) \subseteq T_t$.
\end{lemma}

\begin{proof}
Let $i \in \supp(x^{(t)}) \setminus \supp(x^{(t+1)})$. 
Such a variable exists by the maximality of the augmentation. 
Clearly, $x^{(t+1)}_i = 0$.
Applying Lemma~\ref{lem:circuit-N-kappa} to $x^{(t+1)} - x^*\in\ker(A)$ yields
\begin{equation*}
x^*_i \leq \|x^{(t+1)} - x^*\|_\infty \le \kappa_A \|x_N^{(t+1)} - x_N^*\|_1 = \kappa_A\|x_N^{(t+1)}\|_1 \leq \kappa_A\|x_N^{(t)}\|_1.
\end{equation*}
The equality is due to $x^*_N = \0$, while the last inequality follows from \Cref{lem:geo-decay}.
So, $i \in T_t$. 
\end{proof}

Clearly, any variable $i$ that is set to zero in iteration $t$ belongs to $R_{t+1}$.
So, if $i\notin R_t$, then we make progress as $R_t\subsetneq R_{t+1}$.
Note that this is always the case if $i\in N$.
We show that if $\|x^{(t)}_{T_t} - x^*_{T_t}\|_\infty$ is sufficiently large, then $i\notin R_t$.

\begin{lemma}\label{lem:C-grow}
If $\|x^{(t)}_{T_t} - x^*_{T_t}\|_\infty > 2mn^2\kappa_A^2\left\|x^*_{T_t}\right\|_\infty$ for some iteration $t$, then $R_t \subsetneq R_{t+1}$.
\end{lemma}

\begin{proof}
Let $i\in \supp(x^{(t)})\setminus \supp(x^{(t+1)})$.
Clearly, $i\in R_{t+1}$ because $x^{(t+1)}_i = 0$.
So, it suffices to show that $i\notin R_t$.
Since $x^{(t+1)} - x^{(t)}$ is an elementary vector, we have $\|x^{(t+1)} - x^{(t)}\|_\infty \leq \kappa_A |x^{(t+1)}_i - x^{(t)}_i| = \kappa_A x^{(t)}_i$. 
As $|\supp(x^{(t+1)}-x^{(t)})|\leq m+1$, we obtain
\begin{equation}
  \label{eq:bound1}
\|x_N^{(t)} - x_N^{(t+1)}\|_1 \le (m+1) \|x_i^{(t)} - x_i^{(t+1)}\|_\infty \leq
 (m+1)\kappa_A x_i^{(t)}\le 2m\kappa_A  x_i^{(t)}\, .
\end{equation}

Let $h^{(1)}, \ldots, h^{(k)}$ with $k \le n -m $ be the conformal circuit decomposition of $x^* - x^{(t)}$ used in iteration $t$ of \Cref{proc:diam-bound}. Let $j \in T_t$ such that $|x^{(t)}_j - x_j^*| = \|x_{T_t}^{(t)} - x_{T_t}^*\|_\infty$. There exists $\widetilde h=h^{(\ell)}$ for some $\ell\in [k]$ in this decomposition such that $|\widetilde h_j| \ge \frac1k|x^{(t)}_j - x^*_j|$. Since $A_B$ has full column rank, we have $\supp(\widetilde h) \cap N \neq \emptyset$ and so
\begin{equation}
  \label{eq:bound2}
  \|\widetilde h_N\|_1 \ge \frac{|\widetilde h_j|}{\kappa_A} \ge \frac{|x^{(t)}_j - x^*_j|}{k\kappa_A}\, .
\end{equation}
From \eqref{eq:bound1}, \eqref{eq:bound2} and noting that $\|\widetilde h_N\|_1 \le \|g^{(t)}_N\|_1\le \|x_N^{(t)} - x_N^{(t+1)}\|_1$  by our choice of $g^{(t)}$, we get
\begin{equation*}
 x_i^{(t)} \ge \frac{\|x_N^{(t)} - x_N^{(t+1)}\|_1}{2m\kappa_A} \ge \frac{\|\widetilde h_N\|_1}{2m\kappa_A} \ge \frac{\|x_{T_t}^{(t)} - x_{T_t}^*\|_\infty}{2mk\kappa_A^2}\, .
\end{equation*}

Thus, if $\|x_{T_t}^{(t)} - x_{T_t}^*\|_\infty > 2mn^2\kappa_A^2 \|x_{T_t}^*\|_\infty$ as in the assumption of the lemma, then $x_i^{(t)} > n\|x_{T_t}^*\|_\infty \ge n x_i^*$, where the last inequality is due to $i\in T_t$ by \Cref{lem:zeroed-vars}. 
It follows that $i \notin R_t$ as desired.
\end{proof}

We are ready to give the convergence bound. 
We have just proved that a large $\|x^{(t)}_{T_t} - x^*_{T_t}\|_\infty$ guarantees the extension of $R_t$. Using the geometric decay of $\|x_N^{(t)}\|$ (\Cref{lem:geo-decay}), we now show that if $\|x^{(t)}_{T_t} - x^*_{T_t}\|_\infty$ is small, then $\|x^{(t)}_N\|_1$ drops sufficiently such that a new variable enters $L_t$.

\begin{proof}[Proof of Theorem~\ref{thm:circuit-diameter}]
Recall that we assumed $n\le 2m$ without loss of generality.
In light of \Cref{lem:I-monotone}, it suffices to show that either $L_t$ or $R_t$ is extended in every $O((n-m)\log (n+\kappa_A))$ iterations.
If $\|x^{(t)}_{T_t} - x^*_{T_t}\|_\infty > 2mn^2\kappa_A^2 \left\|x^*_{T_t}\right\|_\infty$, then $R_t \subsetneq R_{t+1}$ by \Cref{lem:C-grow}.

So, let us assume that $\|x^{(t)}_{T_t} - x^*_{T_t}\|_\infty \le 2mn^2\kappa_A^2 \left\|x^*_{T_t}\right\|_\infty$, that is,  $\|x^{(t)}_{T_t}\|_\infty \le (2mn^2\kappa_A^2+1) \left\|x^*_{T_t}\right\|_\infty$.
We may also assume that $\|x^{(t)}_N\|_1>0$, as otherwise $x^{(t)} = x^*$.
By \Cref{lem:geo-decay}, there is an iteration $r = t + O((n - m)\log(n + \kappa_A))$ such that $n^2\kappa_A (2mn^2\kappa_A^2 + 1) \|x_N^{(r)}\|_1 < \|x_N^{(t)}\|_1$.
Hence,
\begin{equation*}
  (2mn^2\kappa_A^2 + 1)\|x_{T_t}^*\|_\infty \ge \|x_{T_t}^{(t)}\|_\infty
\ge \|x_{N}^{(t)}\|_\infty \ge \frac{1}{n}\|x_{N}^{(t)}\|_1 > n\kappa_A(2mn^2\kappa_A^2 + 1)\|x_{N}^{(r)}\|_1,
\end{equation*}
where the second inequality is due to $N\subseteq T_t$ by \Cref{lem:I-monotone}.
Thus, $\|x_{T_t}^*\|_\infty > n\kappa_A \|x_{N}^{(r)}\|_1$ and so $L_t \subsetneq L_r$.
\end{proof}

\section{Circuit Diameter Bound for the Capacitated Case}\label{sec:capacity}

In this section we consider diameter bounds for systems of the form
\begin{equation}\label{sys:bounded-polytope}\tag{Cap-P}
    P_u =\{x\in \R^n:\, Ax=b, \0 \le x \le u\}.
\end{equation}

The theory in \Cref{sec:diam-bound} carries over to $P_u$ at the cost of turning $m$ into $n$ via the standard reformulation 
\begin{equation}
    \label{eq:reformulation-bounded-polytope}
    \widetilde P_u = \left\{(x,y) \in \R^{n + n}:\, \begin{bmatrix}A & 0 \\ I & I\end{bmatrix} \begin{bmatrix} x \\ y \end{bmatrix} = \begin{bmatrix} b \\ u \end{bmatrix}, x, y \ge 0\right\}, \quad P_u = \{x : (x, y ) \in \widetilde P_u\}.
\end{equation}
\begin{corollary}\label{cor:bound-embed}
The circuit diameter of a system in the form \eqref{sys:bounded-polytope} with constraint matrix $A\in\R^{m\times n}$ is $O(n^2\log(n+\kappa_A))$.
\end{corollary}
\begin{proof}
Follows straightforward from \Cref{thm:circuit-diameter} together with the reformulation \eqref{eq:reformulation-bounded-polytope}. 
Let $\widetilde{A}$ denote the constraint matrix of \eqref{eq:reformulation-bounded-polytope}.
It is easy to check that $\kappa_A = \kappa_{\widetilde{A}}$, and that there is a one-to-one mapping between the circuits and maximal circuit augmentations of the two systems.
\end{proof}
Intuitively, the polyhedron should not become more complex; related theory in \cite{Brand2021} also shows how two-sided bounds can be incorporated in a linear program without significantly changing the complexity of solving the program.

\Cref{thm:circuit-diameter-bounds} is proved using a new procedure, which we outline below.
A basic feasible point $x^* \in P_u$ is characterised by a partition $B \cup L \cup H = [n]$ where $A_B$ is a basis (has full column rank), $x^*_L = \0_L$ and $x^*_H = u_H$. In $O(n \log n)$ iterations, we fix all but $2m$ variables to the same bound as in $x^*$; for the remaining system with $2m$ variables, we can use the standard reformulation. 

\Cref{proc:feas-diam-bounded} starts with a preprocessing. We let $S_t\subseteq L\cup H$ denote the set of indices where $x_i^{(t)}\neq x^*_i$, i.e., we are not yet at the required lower and upper bound. If $|S_t|\le m$, then we remove the indices in $(L\cup H)\setminus S_t$, and use the diameter bound resulting from the standard embedding as in Corollary~\ref{cor:bound-embed}. 

As long as $|S_t|>m$, we proceed as follows. We define the cost function $c\in \R^n$ by $c_i=0$ for $i\in B$, $c_i=1/u_i$ for $i\in L$, and $c_i=-1/u_i$ for $i\in H$. For this choice, we see that the optimal solution of the LP $\min_{x\in P_u}\pr{c}{x}$ is $x^*$ with optimal value $\pr{c}{x^*} = -|H|$.

Depending on the value of $\pr{c}{x^{(t)}}$, we perform one of two updates. As long as $\pr{c}{x^{(t)}}\ge -|H|+1$, we take a conformal decomposition of $x^*-x^{(t)}$, and pick the most improving augmenting direction from the decomposition. If $\pr{c}{x^{(t)}}<-|H|+1$, then we use a support circuit augmentation obtained from \textsc{Support-Circuit}$(A,c,x^{(t)},S_t)$.

Let us show that whenever \textsc{Support-Circuit} is called, $g^{(t)}$ is guaranteed to exist. This is because $|S_t|>m$ and $x^{(t)}_i>0$ for all $i\in S_t$. Indeed, if $x^{(t)}_j = 0$ for some $j\in S_t$, then $j\in H$ from the definition of $S_t$. However, this implies that
\[\langle c, x^{(t)} \rangle \geq \sum_{i\in H\setminus \{j\}} c_i x^{(t)}_i \geq -|H|+1,\]
which is a contradiction.

The cost $\pr{c}{x^{(t)}}$ is monotone decreasing, and it is easy to see that $\pr{c}{x^{(0)}}\le n$ for any initial solution $x^{(0)}$. Hence, within $O((n-m)\log n)$ iterations we must reach $\pr{c}{x^{(t)}}<-|H|+1$. Each support circuit augmentation sets $x_i^{(t+1)}=0$ for $i\in L$ or $x_i^{(t+1)}=u_i$ for $i\in H$; hence, we perform at most $n-m$ such augmentations. The formal proof is given below.

\begin{algorithm}[htb!]
  \caption{\textsc{Capacitated-Diameter-Bound}}
  \label{proc:feas-diam-bounded}
  \SetKwInOut{Input}{Input}
  \SetKwInOut{Output}{Output}
  \SetKwComment{Comment}{$\triangleright$\ }{}
  \SetKw{And}{\textbf{and}}
  \SetKw{Or}{\textbf{or}}
  \Input{Polyhedron in the form \eqref{sys:bounded-polytope}, partition $B\cup L \cup H = [n]$ with its corresponding vertex $x^* = (A^{-1}_Bb, \0_L, u_H)$, and initial vertex $x^{(0)}$.}
  \Output{Length of a circuit walk from $x^{(0)}$ to $x^*$.}
  Set the cost $c\in \R^n$ as $c_i = 0$ if $i\in B$, $c_i = 1/u_i$ if $i\in L$, and $c_i = -1/u_i$ if $i\in H$\;
  $t\gets 0$\;
  $S_0 \gets \{i\in L\cup H: x^{(0)}_i \neq x^*_i\}$\;
  \While{$|S_t|> m$}{
    \uIf{$\langle c,x^{(t)}\rangle \geq -|H|+1$}{
        Let $h^{(1)},h^{(2)},\dots,h^{(k)}$ be a conformal circuit decomposition of $x^* - x^{(t)}$\;
        $g^{(t)}\gets h^{(j)}$ for any $j\in \argmin_{i\in [k]}\langle c,h^{(i)} \rangle$\;
    }
    \Else{$g^{(t)}\gets \textsc{Support-Circuit}(A,c,x^{(t)},S_t)$}
    $x^{(t+1)} \gets \aug_P(x^{(t)}, g^{(t)})$\;
    $S_{t+1} \gets \{i\in L\cup H: x^{(t+1)}_i \neq x^*_i\}$; $t\gets t+1$  \; 
    }
  Run \Cref{proc:diam-bound} on $\widetilde A := \begin{bmatrix} A_{B \cup S_t} & 0 \\ I & I \end{bmatrix}$ and $\widetilde b = \begin{bmatrix}
       b \\ u \\
    \end{bmatrix}$ to get $t'\in \Z_+$\;
  \Return $t+t'$ \;
\end{algorithm}

\begin{proof}[Proof of \Cref{thm:circuit-diameter-bounds}]
We show that \Cref{proc:feas-diam-bounded} has the
claimed number of iterations. As previously mentioned, $\pr{c}{x^*} = -|H|$ is the
optimal value of the LP $\min_{x\in P_u}\pr{c}{x}$. Initially, $\pr{c}{x^{(0)}} = -\sum_{i\in H}
\frac{x^{(0)}}{u_i} +\sum_{i\in L} \frac{x^{(0)}}{u_i} \le n$. Similar to
\Cref{lem:geo-decay}, due to our choice of $g^{(t)}$ from the conformal
circuit decomposition, we have $\pr{c}{x^{(t+1)}} + |H| \le (1 -
\frac{1}{n-m})(\pr{c}{x^{(t)}} + |H|)$. In particular, $O((n-m)\log n)$ iterations
suffice to find an iterate $t$ such that $\langle c, x^{(t)} \rangle < - |H|
+ 1$. 

Note that the calls to \textsc{Support-Circuit} do not increase
$\pr{c}{x^{(t)}}$, so from now we will never make use of the conformal
circuit decomposition again. An augmentation resulting from a call to \textsc{Support-Circuit} will set at
least one variable $i \in \supp(g^{(t)})$  to either 0 or $u_i$.  We claim
that either $x^{(t+1)}_i=0$ for some $i\in L$, or $x^{(t+1)}_i=u_i$ for some
$i\in H$, that is, we set a variable to the `correct' boundary. To see this,
note that if $x^{(t+1)}_i$ hits the wrong boundary, then the gap between
$\pr{c}{x^{(t+1)}}$ and $-|H|$ must be at least $1$, a
clear contradiction to $\pr{c}{x^{(t+1)}} < -|H|+1$. 

Thus, after at most $n-m$ calls to  \textsc{Support-Circuit},  we get $|S_{t}|
\le m$, at which point we call \Cref{proc:diam-bound} with at most $2m$
variables, so the diameter bound of \Cref{thm:circuit-diameter} applies.
\end{proof}

\section{Proximity Results}
\label{sec:proximity}
We now present Hoffman-proximity bounds in terms of the circuit
 imbalance measure $\kappa_A$. A simple such bound was Lemma~\ref{lem:circuit-N-kappa}; we now present additional norm bounds.
   These can be derived from more general results in \cite{DadushNV20}; see also \cite{Ekbatani2021}. The references also explain the background and similar results in previous literature, in particular, to proximity bounds via $\Delta_A$ in e.g., \cite{Tardos86} and \cite{Cook1986}. For completeness, we include the proofs.

The next technical lemma will be key in our arguments. See \Cref{cor:basic-norm-bound} below for a simple implication. 
\begin{lemma}\label{lem:circuit-T-indep}
Let $A\in \R^{m\times n}$ and $x\in \R^n$.
Let $L\subseteq \supp(x)$ and $S\subseteq [n]\setminus L$.
If there is no circuit $C\subseteq  \supp(x)$ such that $C\cap S\neq \emptyset$, then
\[\|x_S\|_\infty \leq \kappa_A\min_{z\in \ker(A)+x}\|z_{[n]\setminus \cl(L)}\|_1 \,.\] %
\end{lemma}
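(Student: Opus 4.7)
The plan is to pick $i \in S$ achieving $|x_i| = \|x_S\|_\infty$ and to show that for every $z \in \ker(A)+x$ one has $|x_i| \le \kappa_A\|z_{[n]\setminus\cl(L)}\|_1$; taking the minimum over $z$ then gives the lemma. The bound is trivial when $x_i=0$, so I assume $i \in S\cap\supp(x)$, and set $K := [n]\setminus\cl(L)$. The first observation is that $i \in K$: if $i \in \cl(L)$, then $A_i$ lies in the span of $A_L$, so $L\cup\{i\}\subseteq\supp(x)$ contains a circuit through $i$, contradicting the hypothesis. Fix $z \in \ker(A)+x$, let $w := x-z \in \ker(A)$, and take a conformal circuit decomposition $w = \sum_t h^{(t)}$ with each $h^{(t)}\in\EE(A)$ and $h^{(t)}\sqsubseteq w$.

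The technical heart of the argument, which I expect to be the main obstacle, is the following support claim: for each $t$ with $h^{(t)}_i \neq 0$, some $j(t) \in \supp(h^{(t)})$ lies in $K\setminus\supp(x)$. I would prove this by iterated strong matroid circuit elimination. Write $C := \supp(h^{(t)})$ and assume for contradiction that $C\setminus\supp(x)\subseteq\cl(L)$. If $C\setminus\supp(x)=\emptyset$ then $C\subseteq\supp(x)$ is a circuit through $i$ inside $\supp(x)$, contradicting the hypothesis. Otherwise pick any $j \in C\setminus\supp(x)\subseteq\cl(L)\setminus L$; since $A_j$ lies in the span of $A_L$, there is a fundamental circuit $C_j\subseteq L\cup\{j\}\subseteq\supp(x)\cup\{j\}$ through $j$. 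Since $i \in C\setminus C_j$, strong circuit elimination yields a circuit $C'$ with $i \in C'\subseteq(C\cup C_j)\setminus\{j\}$; using $L\subseteq\supp(x)$ one checks that $C'\setminus\supp(x)\subseteq(C\setminus\supp(x))\setminus\{j\}$, which is strictly smaller yet still contained in $\cl(L)$. Iterating ultimately produces a circuit through $i$ entirely inside $\supp(x)$, the same contradiction, so the required $j(t)$ must exist.

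Finally I would combine the support claim with the definition of $\kappa_A$ and with conformality. For each $t$, the definition of $\kappa_A$ gives $|h^{(t)}_i|\le\kappa_A|h^{(t)}_{j(t)}|\le\kappa_A\|h^{(t)}_{K\setminus\supp(x)}\|_1$, and summing over $t$ using the identity $\sum_t\|h^{(t)}_T\|_1 = \|w_T\|_1$ (a consequence of conformality, applied coordinatewise over $T$) gives
\[
|w_i|=\sum_t|h^{(t)}_i|\le\kappa_A\|w_{K\setminus\supp(x)}\|_1=\kappa_A\|z_{K\setminus\supp(x)}\|_1,
\]
where the last equality uses $x_j=0$ for $j\notin\supp(x)$. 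Since $i\in K\cap\supp(x)$ we have $|z_i|\le\|z_{K\cap\supp(x)}\|_1$, and combining this with $\kappa_A\ge 1$ yields
\[
|x_i|\le|z_i|+|w_i|\le\kappa_A\|z_{K\cap\supp(x)}\|_1+\kappa_A\|z_{K\setminus\supp(x)}\|_1=\kappa_A\|z_K\|_1,
\]
as required. Once the iterated circuit-elimination step is in place, this final accounting via triangle inequality and conformality is routine.
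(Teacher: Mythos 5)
Your proposal is correct, and it shares the overall skeleton of the paper's proof: pick the maximizing index in $S$, note it lies outside $\cl(L)$, take a conformal circuit decomposition of $x - z$, and for each term touching that index locate a second support index in $[n]\setminus(\cl(L)\cup\supp(x))$; conformality and the definition of $\kappa_A$ then yield the bound. The difference is in how you establish the support claim. The paper argues by contradiction using a direct linear-algebraic construction: assuming $\supp(h^{(t)})\setminus\cl(L)\subseteq\supp(x)$, it rewrites the columns of $A$ in $\cl(L)\setminus L$ as combinations of the columns of $A_L$ and uses this to transform $h^{(t)}$ into a kernel vector $h$ with $h_{\cl(L)\setminus L} = \0$ and $h_{[n]\setminus\cl(L)} = h^{(t)}_{[n]\setminus\cl(L)}$; then $\supp(h)\subseteq\supp(x)$ and $\supp(h)$ meets $S$, so $\supp(h)$ contains a forbidden circuit. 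You instead stay entirely in the matroid language, iterating the strong circuit elimination axiom against fundamental circuits of the elements of $\cl(L)\setminus L$ to shrink $C\setminus\supp(x)$ while keeping $i$ in the circuit, terminating in the same forbidden circuit. Both reach the identical contradiction; the paper's argument is a one-shot projection and is arguably shorter once written out, while yours is more combinatorial, avoids introducing the auxiliary coefficient vectors $y^{(i)}$, and would adapt verbatim to an oriented-matroid setting. The final accounting also differs slightly in bookkeeping (you split $K$ into $K\cap\supp(x)$ and $K\setminus\supp(x)$ and use $|z_i|\le\|z_{K\cap\supp(x)}\|_1$, whereas the paper bounds $|z_j|$ directly and isolates $[n]\setminus(\cl(L)\cup\{j\})$), but these are equivalent and both give the factor $\kappa_A$ on the full $\|z_{[n]\setminus\cl(L)}\|_1$.
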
 
Before the proof, it is worth stating a useful special case $L = \emptyset$ and $S=[n]$.%

\begin{corollary}\label{cor:basic-norm-bound}
Let $x$ be a basic (but not necessarily feasible) solution to \eqref{sys:lp}. Then, for any $z$ where $Az=b$, we have
$\|x\|_\infty\leq \kappa_A\|z\|_1 $. 
\end{corollary}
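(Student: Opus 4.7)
The plan is to invoke \Cref{lem:circuit-T-indep} directly with $L=\emptyset$ and $S=[n]$, and verify the two conditions we need.

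First I would check that the hypothesis of \Cref{lem:circuit-T-indep} holds. With $S=[n]$, the condition reduces to: there is no circuit $C\subseteq\supp(x)$. Since $x$ is a basic solution to \eqref{sys:lp}, the columns $A_{\supp(x)}$ are linearly independent, so indeed no dependence (i.e., no elementary vector, hence no circuit) can be supported inside $\supp(x)$. The hypothesis $L\subseteq \supp(x)$ is vacuous for $L=\emptyset$, and $S\cap L=\emptyset$ is automatic.

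Next I would apply the lemma. It gives
\[
\|x\|_\infty \;=\; \|x_{[n]}\|_\infty \;\le\; \kappa_A \min_{z'\in \ker(A)+x}\|z'_{[n]\setminus \cl(\emptyset)}\|_1.
\]
Since $\ker(A)+x = \{z'\in\R^n:\, Az' = Ax = b\}$, any $z$ with $Az=b$ lies in this affine subspace. Moreover, $[n]\setminus \cl(\emptyset)\subseteq [n]$, so $\|z_{[n]\setminus\cl(\emptyset)}\|_1 \le \|z\|_1$. Chaining these yields
\[
\|x\|_\infty \;\le\; \kappa_A\|z_{[n]\setminus\cl(\emptyset)}\|_1 \;\le\; \kappa_A\|z\|_1,
\]
as required.

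There is no real obstacle — the content is entirely in \Cref{lem:circuit-T-indep}. The only subtlety worth a sentence is the appearance of $\cl(\emptyset)$ (the set of loops, i.e., zero columns of $A$); this is harmless because we bound $\|z_{[n]\setminus\cl(\emptyset)}\|_1$ above by the full $\ell_1$-norm $\|z\|_1$, so loops drop out of the statement and we do not need to argue about them separately.
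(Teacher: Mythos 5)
Your proof is correct and matches the paper exactly: the paper also obtains this corollary as the special case $L=\emptyset$, $S=[n]$ of Lemma~\ref{lem:circuit-T-indep}, and your verification that basicness of $x$ rules out any circuit in $\supp(x)$ is precisely the required observation. The remark about $\cl(\emptyset)$ is a nice touch but, as you note, immaterial once you bound by the full $\ell_1$-norm.
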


\begin{proof}[Proof of Lemma~\ref{lem:circuit-T-indep}]
First, we show that $x_{S\cap \cl(L)} = \0$ due to our assumption.
Indeed, any $i\in S\cap \cl(L)$ with $x_i\neq 0$ gives rise to a circuit in $L\cup \{i\}\subseteq \supp(x)$, contradicting the assumption in the lemma.
It follows that $\|x_S\|_\infty = \|x_{S\setminus \cl(L)}\|_\infty$;
let $j\in S\setminus \cl(L)$ such that $|x_j| = \|x_{S}\|_\infty$.
Let $z\in \ker(A)+x$ be a minimizer of the RHS in the statement.
We may assume that $|x_j|> |z_j|$, as otherwise we are done because $\kappa_A\geq 1$.

Let $h^{(1)},\ldots, h^{(k)}$ be a conformal circuit decomposition of $z - x \in \ker(A)$. 
Among these elementary vectors, consider the set $R := \{t\in [k]: h^{(t)}_j\neq 0\}$.
\begin{claim}\label{clm:circuit-T-indep}
For each $t\in R$, there exists an index $i(t)\in \supp(h^{(t)})\setminus \cl(L)$ such that $x_{i(t)}= 0$ and $z_{i(t)}\neq 0$.
\end{claim}
\begin{proof}
For the purpose of contradiction, suppose that $\supp(h^{(t)})\setminus \cl(L)\subseteq \supp(x)$.
For every $i\in \cl(L)\setminus L$, we can write $A_i = Ay^{(i)}$ where $\supp(y^{(i)})\subseteq L$.
Consider the vector 
\[h\coloneqq h^{(t)} + \sum_{i\in \cl(L)\setminus L} h^{(t)}_i (y^{(i)} - e_i).\]
Clearly, $h_{\cl(L)\setminus L} = \0$ and $h_{[n]\setminus \cl(L)} = h^{(t)}_{[n]\setminus \cl(L)}$.
Since $L\subseteq \supp(x)$ and we assumed $\supp(h^{(t)})\setminus \cl(L)\subseteq \supp(x)$, it follows that $\supp(h)\subseteq \supp(x)$.
Moreover, $j\in \supp(h)$ because $j\in S\setminus \cl(L)$. 
Hence, applying \Cref{lem:conformal} to $h\in \ker(A)$ yields an elementary vector $g\in \EE(A)$ such that $\supp(g)\subseteq \supp(x)$ and $\supp(g)\cap S\neq \emptyset$.
This contradicts the assumption of the lemma.
\end{proof}

By conformality of the decomposition, $|x_j-z_j|=\sum_{t\in R} |h^{(t)}_j|$.
According to \Cref{clm:circuit-T-indep}, for every $t\in R$, we have $|h^{(t)}_j|\le \kappa_A |h^{(t)}_{i(t)}|$ where $i(t)\in [n]\setminus (\cl(L)\cup \{j\})$; notice that $i(t)\neq j$ for all $t\in R$ due to our assumption $|x_j|>0$.
Applying conformality again yields
\[\sum_{t\in R}|h^{(t)}_{i(t)}| = \sum_{s\in [n]\setminus (\cl(L)\cup\{j\})} \sum_{i(t) = s} |h^{(t)}_{i(t)}| \leq \sum_{s\in [n]\setminus (\cl(L)\cup\{j\})} |z_s| = \|z_{[n]\setminus (\cl(L)\cup\{j\})}\|_1.\]
Therefore,
\[\|x_S\|_\infty  = |x_j| \leq |z_j| + |x_j - z_j| \leq \kappa_A\|z_{[n]\setminus \cl(L)}\|_1\]
where the last inequality is obtained by combining the previous equation and inequalities.
\end{proof}

The following proximity theorem will be key to derive $x^*_i=0$ for certain variables in our optimization algorithm;
see \cite{DadushNV20} and \cite[Theorem 6.5]{Ekbatani2021}. 
For
$\tilde{c}\in \R^n$, we use $\LP(\tilde{c})$ to denote \eqref{sys:lp} with cost vector $\tilde{c}$, and $\OPT(\tilde{c})$ as the optimal value of $\LP(\tilde{c})$.

\begin{theorem}\label{thm:fixing}
Let $c,c'\in \R^n$ be two cost vectors, such that both $\LP(c)$ and $\LP(c')$ have finite optimal values. Let $s'$ be a dual optimal solution to $\LP(c')$. 
For all indices $j\in [n]$ such that
\[s'_j> (m+1)\kappa_A \|c-c'\|_\infty\,,\]
it follows that $x^*_j=0$ for every optimal solution $x^*$ to $\LP(c)$.
\end{theorem}

\begin{proof}
We may assume that $c\neq c'$, as otherwise we are done by complementary slackness.
Let $x'$ be an optimal solution to $\LP(c')$. By complementary slackness, $s'_jx'_j=0$, and therefore
 $x'_j=0$.
For the purpose of contradiction, suppose that there exists an optimal solution $x^*$ to $\LP(c)$ such that $x^*_j>0$.
Let $h^{(1)},\ldots, h^{(k)}$ be a conformal 
 circuit decomposition of $x^*-x'$.
Then, $h^{(t)}_j>0$ for some $t\in [k]$.
Since $h^{(t)}$ is an elementary vector, $|\supp{(h^{(t)})}|\leq m+1$ and so $\|h^{(t)}\|_1\le (m+1)\|h^{(t)}\|_\infty\le(m+1)\kappa_A h_j^{(t)}$.
Observe that for any $i\in [n]$ where $h^{(t)}_i < 0$, we have $s'_i = 0$ because $x'_i >  x^*_i \geq 0$.
Hence,
\begin{align*}
  \pr{c}{h^{(t)}} &= \pr{c-c'}{h^{(t)}} + \pr{c'}{h^{(t)}} \ge - \|c-c'\|_\infty \|h^{(t)}\|_1 + \pr{s'}{h^{(t)}} \\
  &\geq -(m+1)\kappa_A\|c-c'\|_\infty\,h^{(t)}_j + s'_jh^{(t)}_j > 0\, .
\end{align*}
The first inequality here used H\"older's inequality and that $\pr{c'}{h^{(t)}}=\pr{s'}{h^{(t)}}$ since $c'-s'\in \im(A^\top)$ and $h^{(t)}\in \ker(A)$.
Since $x^*-h^{(t)}$ is feasible to $\LP(c)$, this contradicts the optimality of $x^*$.
\end{proof}

The following lemma provides an upper bound on the norm of the perturbation $c-c'$ for which the existence of an index $j$ as in \Cref{thm:fixing} is guaranteed.

\begin{lemma}\label{lem:big-slack}
Let $c,c'\in \R^n$ be two cost vectors, and let $s'$ be an optimal dual solution to $\LP(c')$.
If $c\in \ker(A)$, $\|c\|_2 = 1$ and $\|c-c'\|_\infty <  1/(\sqrt{n}(m+ 2)\kappa_A)$, then there exists an index $j\in [n]$ such that 
\[s'_j>\frac{m+1}{\sqrt{n}(m+2)}.\]
\end{lemma}

\begin{proof}
Let $r = c-c'$.
Note that $s'+r\in \im(A^{\top})+c$.
Then,
\[\|s'\|_\infty + \|r\|_\infty \geq\|s'+r\|_\infty\geq \frac{1}{\sqrt{n}}\|s'+r\|_2\geq \frac{1}{\sqrt{n}}\|c\|_2 = \frac{1}{\sqrt{n}},\]
where the last inequality is due to $s'+r-c$ and $c$ being orthogonal.
This gives us
\[\|s'\|_\infty \geq \frac{1}{\sqrt{n}} - \|r\|_\infty > \frac{(m+2)\kappa_A-1}{\sqrt{n}(m+2)\kappa_A} \geq \frac{m+1}{\sqrt{n}(m+2)}\]
as desired because $\kappa_A\geq 1$.
\end{proof}

\section{A Circuit Augmentation Algorithm for Feasibility}\label{sec:feas}

In this section we prove Theorem~\ref{thm:feasible-augment}: given a linear program \eqref{sys:lp} 
with cost $c = (\0_{[n]\setminus N}, \1_N)$ for some $N\subseteq [n]$, find a solution $x$ with $x_N=\0$ (showing that the optimum value is 0), or certify that no such solution exists.
A dual certificate in the latter case is a vector $y\in\R^m$ such that $A^\top y\le c$ and $\pr{b}{y}>0$.

\Cref{thm:feasible-augment} can be used to solve the feasibility problem for linear programs. Given a polyhedron in standard form \eqref{sys:polytope}, we construct an auxiliary linear program whose feasibility problem is trivial, and whose optimal solutions correspond to feasible solutions to \eqref{sys:polytope}. This is in the same tune as Phase I of the Simplex method:
\begin{equation}
    \label{sys:aux-lp}\tag{Aux-LP}
    \min \; \pr{\1}{z}  \quad \mathrm{s.t.}\quad
    Ay - Az=b\, , \, 
    y, z \geq 0\, .\\
\end{equation}
For the constraint matrix $\widetilde A = \begin{bmatrix} A & - A\end{bmatrix}$, it is easy to see that $\kappa_{\widetilde A} = \kappa_A$ and that any solution $Ax = b$ can be converted into a feasible solution to \eqref{sys:aux-lp} via $(y,z) = (x^+, x^-)$. Hence, if 
the subroutines \textsc{Support-Circuit} and \textsc{Ratio-Circuit} are available for \eqref{sys:aux-lp}, then we can invoke \Cref{thm:feasible-augment} with $N = \{n+1,n+2,\dots,2n\}$ on \eqref{sys:aux-lp} to solve the feasibility problem of \eqref{sys:polytope} in $O(mn\log(n+\kappa_A))$ augmentation steps.

\medskip
Our algorithm is presented in \Cref{alg:feas}. We maintain a set $\cL_t\subseteq [n]\setminus N$, initialized as $\emptyset$.  
Whenever $x^{(t)}_i \ge 4mn\kappa_A \|x_N^{(t)}\|_1$ for the current iterate $x^{(t)}$, we add $i$ to $\cL_t$. 
Note that once an index $i$ enters $\mathcal{L}_t$, it is never removed, even though $x_i$ might drop below this threshold in the future. Still, we will show that $\mathcal{L}_t\subseteq \supp(x^{(t)})$ in every iteration.

Whenever $\rk(\cL_t)$ increases, we run  \textsc{Support-Circuit}$(A,c,x^{(t)},N)$ iterations
 as long as there exists a circuit in $\supp(x^{(t)})$ intersecting $N$. Afterwards, we run a sequence of \textsc{Ratio-Circuit} iterations until $\rk(\cL_t)$ increases again.
The key part of the analysis is to show that $\rk(\cL_t)$ increases in every $O(n\log (n+\kappa_A))$ iterations.

\begin{algorithm}[htb!]
  \caption{\textsc{Feasibility-Algorithm}}
  \label{alg:feas}
  \SetKwInOut{Input}{Input}
  \SetKwInOut{Output}{Output}
  \SetKwComment{Comment}{$\triangleright$\ }{}
  \SetKw{And}{\textbf{and}}
  \SetKw{Or}{\textbf{or}}
  \Input{Linear program in standard form \eqref{sys:lp} with cost $c = (\0_{[n]\setminus N}, \1_N)$ for some $N\subseteq [n]$,  and initial feasible solution $x^{(0)}$.}
  \Output{A solution $x$ with $x_N=\0$, or a dual solution $y\in\R^m$, $A^\top y\le c$, $\pr{b}{y}>0$.}
 $t\gets 0$ ; $\cL_{t-1} \gets \emptyset$ \;
  \While{$x^{(t)}_N\neq\0$}{
    $\cL_t\gets \cL_{t-1}\cup \{i\in [n]:\, x^{(t)}_i \ge 4mn\kappa_A \|x_N^{(t)}\|_1\}$ \; 
    \If{$t=0$ \Or $\rk(\cL_t)> \rk(\cL_{t-1})$}{
    \While{$\exists$ a circuit in $\supp(x^{(t)})$ intersecting $N$}{ %
    $g^{(t)}\gets \textsc{Support-Circuit}(A,c,x^{(t)},N)$ \; %
    $x^{(t+1)} \gets \aug_P(x^{(t)}, g^{(t)})$ \; 
    $\cL_{t+1}\gets \cL_t$ ; $t\gets t+1$  \; 
    }
    \If{$x^{(t)}_N = \0$}{
      \Return $x^{(t)}$ \; %
    }
    }
    $(g^{(t)},y^{(t)},s^{(t)})\gets \textsc{Ratio-Circuit}(A,c,1/x^{(t)})$ \;
    \If{$\pr{b}{y^{(t)}}>0$}{Terminate with infeasibility certificate ;}
    $x^{(t+1)} \gets \aug_P(x^{(t)}, g^{(t)})$; $t\gets t+1$  \; 
    }
  \Return $x^{(t)}$ \;
\end{algorithm}

Let us first analyze what happens during \textsc{Ratio-Circuit} iterations.

\begin{lemma}\label{cl:ratio}
If  \textsc{Ratio-Circuit} is called in iteration $t$, then either $\|x_N^{(t+1)}\|_1\le \left(1-\frac1n\right)\|x_N^{(t)}\|_1$, or the algorithm terminates with a dual certificate.
\end{lemma}
\begin{proof}
 The oracle returns $g^{(t)}$ that is optimal to \eqref{sys:minratio} and $(y^{(t)},s^{(t)})$ that is optimal to \eqref{sys:minratio-dual} with optimum value $-\lambda$. Thus, $A^\top y+s=c$ and $\0\le s\le \lambda w$. Recall that we use weights $w_i=1/x^{(t)}_i$.
 If $\pr{b}{y^{(t)}}>0$, the algorithm terminates. Otherwise,
note that  
\[
 \pr{c}{x^{(t)}}=\pr{b}{y^{(t)}}+\pr{s^{(t)}}{x^{(t)}}\le 
 \lambda \pr{w_{\supp(x^{(t)})}}{x^{(t)}_{\supp(x^{(t)})}}\leq n \lambda\, ,
 \]
 implying $\lambda\ge \pr{c}{x^{(t)}}/n$, and therefore 
 $\pr{c}{g^{(t)}}=-\lambda\le -\pr{c}{x^{(t)}}/n$. This implies the lemma, noting that
 \[
 \|x_N^{(t+1)}\|_1=\pr{c}{x^{(t+1)}}\le \pr{c}{x^{(t)}}+\pr{c}{g^{(t)}}\le \left(1-\frac1n\right)\|x_N^{(t)}\|_1\, .
 \qedhere
 \]
\end{proof}

Next, we analyze what happens during \textsc{Support-Circuit} iterations.

\begin{lemma}\label{cl:support}
If \textsc{Support-Circuit} is called in iteration $t$, then $\|x^{(t+1)}-x^{(t)}\|_\infty \le \kappa_A\|x_N^{(t)}\|_1$.
\end{lemma}

\begin{proof}
We have $g^{(t)}_i < 0$ for some $i\in N$ because $\supp(g^{(t)})\cap N\neq \emptyset$ and $\pr{c}{g^{(t)}}\leq 0$.
Hence,
\[\|x^{(t+1)}-x^{(t)}\|_\infty\le  \kappa_A |x^{(t+1)}_i  - x^{(t)}_i|\leq \kappa_A x^{(t)}_i \leq \kappa_A\|x_N^{(t)}\|_1. \qedhere\]
\end{proof}
The following lemma shows that once a coordinate enters $\cL_t$, its value stays above a certain threshold.

\begin{lemma}\label{cl:clt}
For every iteration $t\geq 0$, we have $x_j^{(t)}\ge 2mn\kappa_A\|x^{(t)}_N\|_1$ for all $j\in \cL_t$.
\end{lemma}

\begin{proof}
Fix an iteration $t\geq 0$ and a coordinate $j\in \cL_t$.
We may assume that $\|x^{(t)}_N\|_1 > 0$, as otherwise the lemma trivially holds because $x^{(t)}\geq \0$.
Let $r\le t$ be the iteration in which $j$ was added to $\cL_r$; the lemma clearly holds at iteration $r$.

We analyze the ratio $x^{(t')}_j/\|x^{(t')}_N\|_1$ for iterations $t'=r,\ldots,t$.  
At an iteration $r\leq t'<t$ that performs \textsc{Ratio-Circuit}, observe that if $x^{(t')}_j/\|x^{(t')}_N\|_1\ge 2n\kappa_A$, then 
    \begin{equation*}
      \label{eq:monotone_ratio_circuit}
      \begin{aligned}
    \frac{x^{(t'+1)}_j}{\|x_N^{(t'+1)}\|_1} \ge \frac{x^{(t')}_j - \kappa_A\|x_N^{(t'+1)} - x_N^{(t')}\|_1}{(1- \frac{1}{n})\|x_N^{(t')}\|_1} \ge \frac{x^{(t')}_j - 2\kappa_A\|x_N^{(t')}\|_1}{(1- \frac{1}{n})\|x_N^{(t')}\|_1} \ge \frac{(1-\frac{1}{n})x_j^{(t')}}{(1 - \frac{1}{n})\|x_N^{(t')}\|_1} = \frac{x^{(t')}_j}{\|x_N^{(t')}\|_1} \,. 
    \end{aligned}
    \end{equation*}
The first inequality is due to \Cref{cl:ratio} and the fact that $x^{(t'+1)} - x^{(t')}$ is an elementary vector whose support intersects $N$. This fact follows from $\langle c, g^{(t')} \rangle < 0$ because $\|x^{(t')}_N\|_1 \geq \|x^{(t)}_N\|_1 > 0$ and $\langle b, y^{(t')} \rangle\leq 0$.
The second inequality uses the monotonicity $\|x^{(t'+1)}_N\|_1\le \|x^{(t')}_N\|_1$ and the triangle inequality. The third inequality uses the assumption $x^{(t')}_j/\|x^{(t')}_N\|_1\ge 2n\kappa_A$.

Hence, it suffices to show that \textsc{Support-Circuit} maintains the invariant $x^{(t')}_j/\|x^{(t')}_N\|_1\ge 2n\kappa_A$. At an iteration $r\leq t' < t$ which performs \textsc{Support-Circuit}, we have
\[\frac{x^{(t'+1)}_j}{\|x^{(t'+1)}_N\|_1} \geq \frac{x^{(t')}_j - \kappa_A\|x^{(t')}_N\|_1}{\|x^{(t')}_N\|_1} = \frac{x^{(t')}_j}{\|x^{(t')}_N\|_1} - \kappa_A\]
by \Cref{cl:support}. Since \Cref{alg:feas} performs at most $(m+1)n$ \textsc{Support-Circuit} iterations, the total decrease of this ratio is at most $(m+1)n\kappa_A \leq 2mn\kappa_A$.
As the starting value is at least $4mn\kappa_A$, it follows that this ratio does not drop below $2mn\kappa_A$.
\end{proof}

\smallskip
\begin{proof}[Proof of Theorem~\ref{thm:feasible-augment}]
The correctness of \Cref{alg:feas} is obvious.
If the algorithm terminates due to $x^{(t)}_N=0$, then $x^{(t)}$ is the desired solution to \eqref{sys:lp}.
Otherwise, if the algorithm terminates due to $\langle b, y^{(t)} \rangle>0$, then $y^{(t)}$ is the desired dual certificate as it is feasible to \eqref{sys:dual_lp}.

Next, we show that if $\rk(\cL_t) = m$, then the algorithm will terminate in iteration $r \leq t+ n$ with $x^{(r)}_N = \0$.
As long as $x^{(t)}_N \neq \0$, we have $\cL_t\subseteq [n]\setminus N$ by \Cref{cl:clt}.
Moreover, any $i\in \supp(x^{(t)}_N)$ induces a circuit in $\cL_t\cup \{i\}$, so \textsc{Support-Circuit} will be invoked.
Since every call to \textsc{Support-Circuit} reduces $\supp(x^{(t)})$, all the coordinates in $N$ will be zeroed-out in at most $n$ calls.

It is left to bound the number of iterations of \Cref{alg:feas}.
In the first iteration and whenever $\rk(\cL_t)$ increases, we perform a sequence of at most $n$ \textsc{Support Circuit} cancellations. Let us consider an iteration $t$ right after we are done with the \textsc{Support Circuit} cancellations. Then, there is no circuit in $\supp(x^{(t)})$ intersecting $N$. We show that $\rk(\cL_t)$ increases within $O(n\log(n+\kappa_A))$ consecutive calls to \textsc{Ratio-Circuit}; this completes the proof.

By \Cref{cl:ratio}, within $O(n\log (n\kappa_A)) = O(n \log(n + \kappa_A))$ consecutive \textsc{Ratio-Circuit} augmentations, we reach an iterate $r = t + O(n \log(n + \kappa_A))$ such that $\|x_N^{(r)}\|_1 \le (4 mn^3\kappa_A^2)^{-1} \|x_N^{(t)}\|_1$.
Since $\cL_t\subseteq \supp(x^{(t)})$ and $N\subseteq [n]\setminus \cL_t$ by \Cref{cl:clt}, and there is no circuit in $\supp(x^{(t)})$ intersecting $N$, applying Lemma~\ref{lem:circuit-T-indep} with $x=x^{(t)}$ and $z=x^{(r)}$ yields
\[
\|x^{(r)}_{[n]\setminus \cl(\cL_t)}\|_\infty \ge \frac{\|x^{(r)}_{[n]\setminus \cl(\cL_t)}\|_1}{n}\ge \frac{\|x^{(t)}_{N}\|_\infty}{n\kappa_A} \geq \frac{\|x^{(t)}_{N}\|_1}{n^2\kappa_A}\geq {4mn\kappa_A}{\|x_N^{(r)}\|_1}\, ,
\]
showing that some $j\in [n]\setminus \cl(\cL_t)$ must be included in $\cL_r$.
    \end{proof}

\section{A Circuit Augmentation Algorithm for Optimization}\label{sec:opt}

In this section, we give a circuit-augmentation algorithm for solving \eqref{sys:lp}, given by $A\in\R^{m\times n}$, $b\in \R^m$ and $c\in\R^n$. We also assume that an initial feasible solution $x^{(0)}$ is provided. 
In every iteration $t$, the algorithm maintains a feasible solution $x^{(t)}$ to \eqref{sys:lp}, initialized with $x^{(0)}$.
The goal is to augment $x^{(t)}$ using the subroutines \textsc{Support-Circuit} and \textsc{Ratio-Circuit} until the emergence of a nonempty set $N\subseteq [n]$ which satisfies $x^{(t)}_N=x^*_N=\0$ for every optimal solution $x^*$ to \eqref{sys:lp}. 
When this happens, we have reached a lower dimensional face of the polyhedron that contains the optimal face.
Hence, we can fix $x^{(t')}_N=\0$ in all subsequent iterations $t'\geq t$. 
In particular, we repeat the same procedure on a smaller LP with constraint matrix $A_{[n]\setminus N}$, RHS vector $b$, and cost $c_{[n]\setminus N}$, initialized with the feasible solution $x^{(t)}_{[n]\setminus N}$.
Note that a circuit walk of this smaller LP corresponds to %
a circuit walk of the original LP.
This gives the overall circuit-augmentation algorithm. 

\medskip

In what follows, we focus on the aforementioned variable fixing procedure (Algorithm \ref{alg:circ_aug_opt}), since the main algorithm just calls it at most $n$ times.

We fix parameters
\[\delta \coloneqq \frac{1}{2n^{3/2}(m+2)\kappa_A}\, , \qquad T\coloneqq \Theta(n\log(n+\kappa_A))\, , \qquad \Gamma\coloneqq \frac{6(m+2)\sqrt{n}\kappa_A^2T}{\delta}\, .\] 
Throughout the procedure, $A$ and $b$ will be fixed, but we will sometimes modify the cost function $c$. %
Recall that for any $\tilde{c}\in \R^n$, we use $\LP(\tilde{c})$ to denote the problem with cost vector $\tilde{c}$, and the optimal value is $\OPT(\tilde{c})$. We will often use the fact that if $\tilde s\in \im(A^\top)+\tilde c$, then the linear programs $\LP(\tilde s)$ and $\LP(\tilde c)$ are equivalent.

\medskip

Let us start with a high level overview before presenting the algorithm.
The inference that $x^{(t)}_N=x^*_N=\0$ for every optimal $x^*$ will be made using Theorem~\ref{thm:fixing}. To apply this, our goal is to find a cost function $c'$ and an optimal dual solution $s'$ to $\LP(c')$ such that the set of indices $N\coloneqq \{\,j: s'_j> (m+1)\kappa_A \|c-c'\|_\infty\,\}$ is nonempty.

If $c=\0$, then we can return $x^{(0)}$ as an optimal solution. Otherwise, we can normalize to $\|c\|=1$.\footnote{Taking square roots can be avoided by normalizing with $\|c\|_1 = 1$ or $\|c\|_\infty = 1$ instead, and changing the parameters of the algorithm accordingly.} %
Let us start from any primal and dual feasible solutions $(x^{(0)},s^{(0)})$ to $\LP(c)$; we can obtain $s^{(0)}$ from a call to \textsc{Ratio-Circuit}. 
 Within $O(n\log(n+\kappa_A))$ \textsc{Ratio-Circuit} augmentations, we arrive at a pair of primal and dual feasible solutions $(x,s)=(x^{(t)},s^{(t)})$ such that $\pr{x}{s}\le\varepsilon\coloneqq  \pr{x^{(0)}}{s^{(0)}}/\mathrm{poly}(n,\kappa_A)$.

We now describe the high level motivation for the algorithm.
Suppose that for every $i\in\supp(x)$, $s_i$ is small, say $s_i< \delta$. Let $\tilde c_i\coloneqq s_i$ if $i\notin\supp(x)$ and $\tilde c_i\coloneqq 0$ if $i\in\supp(x)$. Then, $\|\tilde c-s\|_\infty< \delta$ and $x$ and $\tilde c$ are primal and dual optimal solutions to $\LP(\tilde c)$. This follows because they are primal and dual feasible and satisfy complementary slackness.
Consider the vector $c'\coloneqq c-s+\tilde c$, which satisfies $\|c-c'\|_\infty < \delta$. 
Since $c-s\in\im(A^\top)$, $\LP(c')$ and $\LP(\tilde c)$ are equivalent. Thus, $x$ and $\tilde c$ are primal and dual optimal solutions to $\LP(c')$. Then, Theorem~\ref{thm:fixing} is applicable for the costs $c, c'$ and the dual optimal solution $\tilde{c}$. However, to be able to make progress by fixing variables, we also need to guarantee that $N\neq \emptyset$. Following Tardos \cite{Tardos85,Tardos86}, this can be ensured if we pre-process by projecting the cost vector $c$ onto $\ker(A)$; this guarantees that $\|s\|$---and thus $\|\tilde c\|$---must be sufficiently large.

Let us now turn to the case when the above property does not hold
 for $(x,s)$: for certain coordinates we could have $x_i>0$ and $s_i\geq\delta$. 
We enter the second \emph{phase} of the algorithm. Let $S = \{i\in [n]:s_i\geq \delta\}$ be the coordinates with large dual slack.
 Since $x_is_i\le \pr{x}{s}\le\varepsilon$, this implies $x_i\leq\varepsilon/\delta$ for all $i\in S$.  Therefore, $\|x_S\|$ is sufficiently small, and one can show that the set of `large' indices $\cL=\{i\in [n]:\, x_i\geq \Gamma\|x_S\|_1\}$ is nonempty. We proceed by defining a new cost function $\tilde c_i\coloneqq s_i$ if $i\in S$ and $\tilde c_i\coloneqq 0$ if $i\notin S$. 
 We perform \textsc{Support-Circuit} iterations as long as there exist circuits in $\supp(x)$ intersecting $\supp(\tilde c)$, and then perform further  $O(n\log(n+\kappa_A))$ {\sc Ratio-Circuit} iterations for the cost function $\tilde c$. If we now arrive at an iterate $(x,s)=(x^{(t')},s^{(t')})$ such that 
 $s_i < \delta$ for every $i\in\supp(x)$, then we truncate $s$ as before to an optimal dual solution to $\LP(c'')$ for some vector $c''$ where $\|c-c''\|_\infty < 2\delta$.
 After that, Theorem~\ref{thm:fixing} is applicable for the costs $c,c''$ and said optimal dual solution. 
 Otherwise, we continue with additional phases.

 The algorithm formalizes the above idea, with some technical modifications. The algorithm comprises at most $m+1$ phases; the main potential is that the rank of the large index set $\cL$ increases in every phase. We show that if an index $i\notin \cl(\cL)$
 was added to $\cL$, then it must have $s_i<\delta$ at the beginning of every later phase. Thus, these indices cannot be violating anymore.

\begin{algorithm}[htb!]
  \caption{\textsc{Variable-Fixing}}
  \label{alg:circ_aug_opt}
  \SetKwInOut{Input}{Input}
  \SetKwInOut{Output}{Output}
  \SetKwComment{Comment}{$\triangleright$\ }{}
  \SetKw{And}{\textbf{and}}
  \SetKw{Or}{\textbf{or}}
  \Input{Linear program in standard form \eqref{sys:lp}, and initial feasible solution $x^{(0)}$.} 
  \Output{Either an optimal solution to \eqref{sys:lp}, or a feasible solution $x$ and $\emptyset\neq N\subseteq [n]$ such that $x_N = x^*_N = \0$ for every optimal solution $x^*$ to \eqref{sys:lp}.}
    $t\gets 0$; $k\gets 0$; $\cL_{t-1}\gets \emptyset$\;
    $c\gets \Pi_{\ker(A)}(c)$\;
    \If{$c=\0$}{
      \Return{$x^{(0)}$}\;
    }
    $c\gets c/\|c\|_2$\;
    $(\cdot,\cdot,\tilde{s}^{(-1)})\gets \textsc{Ratio-Circuit}(A,c,\1)$ \Comment*{Any dual feasible solution to $\LP(c)$}
    \While{$\pr{\tilde{s}^{(t-1)}}{x^{(t)}} > 0$}{
      $S_t\gets \{i\in [n]:\tilde{s}_i^{(t-1)} \geq \delta\}$\;
      $\cL_t\gets \cL_{t-1}\cup \{i\in [n]:\, x^{(t)}_i \ge \Gamma \|x_{S_t}^{(t)}\|_1\}$\;
      \If{$t=0$ \Or $\rk(\cL_t)> \rk(\cL_{t-1})$}{
        $k\gets k+1$ \Comment*{New phase}
        Set modified cost $\tilde{c}^{(k)}\in \R^n_+$ as $\tilde{c}^{(k)}_i \gets \tilde{s}^{(t-1)}_i$ if $i\in S_t$, and
         $\tilde{c}^{(k)}_i \gets 0$ otherwise\;
        \While{$\exists$ a circuit in $\supp(x^{(t)})$ intersecting $\supp(\tilde{c}^{(k)})$}{
          $g^{(t)}\gets \textsc{Support-Circuit}(A,\tilde{c}^{(k)},x^{(t)},\supp(\tilde{c}^{(k)}))$ \;
          $x^{(t+1)} \gets \aug_P(x^{(t)},g^{(t)})$\; %
          $\cL_{t+1}\gets \cL_t$; $t\gets t+1$\; %
        }
      }
      $(g^{(t)},y^{(t)},s^{(t)}) \gets \textsc{Ratio-Circuit}(A,\tilde{c}^{(k)},1/x^{(t)})$\;
      \uIf{$\langle \tilde{c}^{(k)}, g^{(t)} \rangle = 0$}{
       $x^{(t+1)} \gets x^{(t)}$ \Comment*{Terminating in the next iteration by \Cref{clm:terminate}}
      }
      \Else{
       $x^{(t+1)} \gets \aug_P(x^{(t)},g^{(t)})$\; 
      }
      $\tilde{s}^{(t)} \gets \argmin_{s\in \{\tilde{c}^{(k)},s^{(t)}\}}\pr{s}{x^{(t+1)}}$; $t\gets t+1$\; 
    }
    $N\gets \{i\in [n]:\tilde{s}^{(t-1)}_i > \kappa_A (m+1)n\delta\}$\; %
  \Return $(x^{(t)},N)$\;
\end{algorithm}

\medskip

We now turn to a more formal description of 
 Algorithm \ref{alg:circ_aug_opt}. We start by  orthogonally projecting the input cost vector $c$ to $\ker(A)$. 
This does not change the optimal face of \eqref{sys:lp}.
If $c=\0$, then we terminate and return the current feasible solution $x^{(0)}$ as it is optimal. %
Otherwise, we scale the cost to $\|c\|_2 = 1$, and use \textsc{Ratio-Circuit} to obtain a feasible solution $\tilde{s}^{(-1)}$ to the dual of $\LP(c)$. 

The rest of Algorithm \ref{alg:circ_aug_opt} consists of repeated \emph{phases}, ending when $\pr{\tilde{s}^{(t-1)}}{x^{(t)}} = 0$.
In an iteration $t$, let $S_t = \{i\in [n]:\tilde{s}^{(t-1)}_i \geq \delta\}$ be the set of coordinates with large dual slack.
The algorithm keeps track of the following set
\[\cL_t \coloneqq  \cL_{t-1} \cup\set{i\in [n]:x^{(t)}_i \geq \Gamma \|x^{(t)}_{S_t}\|_1}.\]
These are the variables that were once large with respect to $\|x^{(t')}_{S_{t'}}\|_1$ in iteration $t'\leq t$.
Note that $|\cL_t|$ is monotone nondecreasing.

The first phase starts at  $t=0$, and we enter a new phase $k$ whenever $\rk(\cL_t) > \rk(\cL_{t-1})$.
Such an iteration $t$ is called the \emph{first iteration} in phase $k$.
At the start of the phase, we define a new \emph{modified cost} $\tilde{c}^{(k)}$ from the dual slack $\tilde{s}^{(t-1)}$ by truncating entries less than $\delta$ to 0.
This cost vector will be used until the end of the phase. 
Then, we call \textsc{Support-Circuit}$(A,\tilde{c}^{(k)},x^{(t)},\supp(\tilde{c}^{(k)}))$ to eliminate circuits in $\supp(x^{(t)})$ intersecting $\supp(\tilde{c}^{(k)})$.
Note that there are at most $n$ such calls because each call sets a primal variable $x^{(t)}_i$ to zero.

In the remaining part of the phase, we augment $x^{(t)}$ using \textsc{Ratio-Circuit}$(A,\tilde{c}^{(k)},1/x^{(t)})$ until $\rk(\cL_t)$ increases, triggering a new phase.
In every iteration, \textsc{Ratio-Circuit}$(A,\tilde{c}^{(k)},1/x^{(t)})$ returns a minimum cost-to-weight ratio circuit $g^{(t)}$, where the choice of weights $1/x^{(t)}$ follows Wallacher \cite{Wallacher}.
It also returns a feasible solution $(y^{(t)},s^{(t)})$ to the dual of $\LP(\tilde{c}^{(k)})$.
After augmenting $x^{(t)}$ to $x^{(t+1)}$ using $g^{(t)}$, we update the dual slack as 
$$\tilde{s}^{(t)} \coloneqq  \argmin_{s\in \{\tilde{c}^{(k)}, s^{(t)}\}} \pr{s}{x^{(t+1)}}.$$
This finishes the description of a phase.

Since $\rk(A) = m$, clearly there are at most $m+1$ phases.
Let $k$ and $t$ be the final phase and iteration of \Cref{alg:circ_aug_opt} respectively.
As $\pr{\tilde{s}^{(t-1)}}{x^{(t)}}=0$, and $x^{(t)},\tilde{s}^{(t-1)}$ are primal-dual feasible solutions to $\LP(\tilde c^{(k)})$, they are also optimal.
Now, it is not hard to see that $\tilde{c}^{(k)} \in \im(A^\top) + c-r$ for some $\0 \leq r\le (m+1)\delta \1$ (\Cref{clm:cost-perturb}). 
Hence, $\tilde{s}^{(t-1)}$ is also an optimal solution to the dual of $\LP(c-r)$. 
The last step of the algorithm consists of identifying the set $N$ of coordinates with large dual slack $\tilde{s}^{(t-1)}_i$.
Then, applying Theorem~\ref{thm:fixing} for $c'=c-r$ allows us to conclude that they can be fixed to zero.

In order to prove \Cref{thm:opt-augment}, we need to show that $N\neq \emptyset$.
Moreover, we need to show that there are at most $T$ iterations of {\sc Ratio-Circuit} per phase.
First, we show that the objective value is monotone nonincreasing.

\begin{lemma}\label{clm:obj-monotone}
For any two iterations $r\geq t$ in phases $\ell\geq k\geq 1$ respectively, 
\[\pr{\tilde{c}^{(\ell)}}{x^{(r)}}\leq \pr{\tilde{c}^{(k)}}{x^{(t)}}.\]
\end{lemma}

\begin{proof}
We proceed by induction on $\ell-k \geq 0$.
For the base case $\ell-k=0$, iterations $r$ and $t$ occur in the same phase.
So, the objective value is nonincreasing from the definition of {\sc Support Circuit} and {\sc Ratio-Circuit}.
Next, suppose that the statement holds for $\ell-k = d$, and consider the inductive step $\ell-k = d+1$.
Let $q$ be the first iteration in phase $k+1$; note that $r\geq q>t$.
Then, we have
$$\pr{\tilde{c}^{(\ell)}}{x^{(r)}}\leq \pr{\tilde{c}^{(k+1)}}{x^{(q)}} \leq \pr{\tilde{s}^{(q-1)}}{x^{(q)}}\leq \pr{\tilde{c}^{(k)}}{x^{(q)}}\leq \pr{\tilde{c}^{(k)}}{x^{(t)}}\, .$$
The first inequality uses the inductive hypothesis. In the second inequality, we use that $\tilde{c}^{(k+1)}$ is obtained from $\tilde{s}^{(q-1)}$ by setting some nonnegative coordinates to 0. The third inequality is by the definition of $\tilde{s}^{(q-1)}$. The final inequality is by monotonicity within the same phase.
\end{proof}

The following claim gives a sufficient condition for \Cref{alg:circ_aug_opt} to terminate.

\begin{claim}\label{clm:terminate}
Let $t$ be an iteration in phase $k\geq 1$. If {\sc Ratio-Circuit} returns an elementary vector $g^{(t)}$ such that $\pr{\tilde{c}^{(k)}}{g^{(t)}} = 0$, then \Cref{alg:circ_aug_opt} terminates in iteration $t+1$.
\end{claim}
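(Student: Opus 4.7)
The plan is to apply strong LP duality to the programs \eqref{sys:minratio}--\eqref{sys:minratio-dual} solved by the \textsc{Ratio-Circuit} call at iteration $t$, and then conclude using nonnegativity. Concretely, \textsc{Ratio-Circuit}$(A,\tilde{c}^{(k)},1/x^{(t)})$ returns the basic optimal primal $g^{(t)}$ together with a basic optimal dual solution $(y^{(t)},s^{(t)})$ attaining value $-\lambda$ for some $\lambda \ge 0$ with $s^{(t)} \le \lambda w$, where $w=1/x^{(t)}$. Strong duality gives $\pr{\tilde{c}^{(k)}}{g^{(t)}} = -\lambda$, so the hypothesis $\pr{\tilde{c}^{(k)}}{g^{(t)}} = 0$ immediately forces $\lambda = 0$.

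Next, I would unpack the dual feasibility $\0 \le s^{(t)} \le \lambda w$ with $\lambda = 0$. Under the standing conventions $1/0 = \infty$ and $\infty\cdot 0 = 0$, the bound $s^{(t)}_i \le \lambda w_i = 0$ holds uniformly on every coordinate, whether $x^{(t)}_i > 0$ (so $w_i$ is finite) or $x^{(t)}_i = 0$ (so $w_i = \infty$). Combined with $s^{(t)} \ge \0$, this pins $s^{(t)} = \0$, and in particular $\pr{s^{(t)}}{x^{(t+1)}} = 0$.

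Finally, the tie-break definition $\tilde{s}^{(t)} = \argmin_{s\in\{\tilde{c}^{(k)},s^{(t)}\}} \pr{s}{x^{(t+1)}}$ gives $\pr{\tilde{s}^{(t)}}{x^{(t+1)}} \le \pr{s^{(t)}}{x^{(t+1)}} = 0$. Both candidates are nonnegative vectors ($\tilde{c}^{(k)}$ is a truncation of the nonnegative slack $\tilde{s}^{(t-1)}$, and $s^{(t)} \ge \0$ by dual feasibility), while $x^{(t+1)} \ge \0$ is primal feasible to \eqref{sys:lp}. Hence $\pr{\tilde{s}^{(t)}}{x^{(t+1)}} \ge 0$, and equality follows.

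The only delicate bookkeeping is the handling of coordinates with $w_i = \infty$, but the paper's standing conventions align the infinite and finite cases. Beyond that there is no real obstacle: the claim is essentially strong duality for the ratio program together with a sign argument for the $\argmin$.
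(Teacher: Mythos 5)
Your argument breaks down at the step ``$s^{(t)} = \0$.'' For coordinates $i$ with $x^{(t)}_i = 0$ (so $w_i = \infty$), the dual constraint $s_i \le \lambda w_i$ with $\lambda = 0$ is \emph{not} $s_i \le 0$. The paper's convention $\infty \cdot 0 = 0$ is stated for primal-side products of the form $w_i z_i$ with $z_i = 0$; it is not a rule for rewriting the feasible region of the dual. In fact, the LP dual of \eqref{sys:minratio} imposes no upper bound on $s_i$ for those $i$: the primal constraint $\pr{w}{z^-}\le 1$ with $w_i=\infty$ is equivalent to $z_i\ge 0$, whose dual multiplier is only required to satisfy $s_i\ge 0$. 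So when $\lambda=0$, all you may conclude from feasibility is $s^{(t)}_i = 0$ for $i\in\supp(x^{(t)})$, not $s^{(t)}=\0$. (One can easily exhibit basic optimal duals with $s^{(t)}_i>0$ on indices where $x^{(t)}_i=0$.)

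Even after this repair, the jump to $\pr{s^{(t)}}{x^{(t+1)}} = 0$ is not immediate: it is conceivable a priori that some $i$ has $x^{(t)}_i = 0$, $s^{(t)}_i > 0$, and $x^{(t+1)}_i > 0$. Ruling that out needs an extra ingredient, either complementary slackness (which forces $g^{(t)}_i = 0$ for such $i$) or, as the paper does, the orthogonality $\pr{s^{(t)}}{g^{(t)}} = \pr{\tilde c^{(k)}}{g^{(t)}} = 0$ (since $s^{(t)}-\tilde c^{(k)}\in\im(A^\top)$ and $g^{(t)}\in\ker(A)$), which gives $\pr{s^{(t)}}{x^{(t+1)}} \le \pr{s^{(t)}}{x^{(t)}}$. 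The paper's route sidesteps the $w_i=\infty$ issue entirely by multiplying the dual constraint by $x^{(t)}_i$ first, deducing $s^{(t)}_i x^{(t)}_i \le \lambda$ coordinatewise and hence $\pr{s^{(t)}}{x^{(t)}} \le n\lambda = 0$, and then transferring to $x^{(t+1)}$ via the orthogonality just mentioned. You should adopt that structure rather than trying to force $s^{(t)}=\0$.
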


\begin{proof}
   Recall that the weights $w$ in \textsc{Ratio-Circuit} are chosen as $w = 1/x^{(t)}$. Recall also the constraint $s^{(t)} \le \lambda w$ in the dual program \eqref{sys:minratio-dual}. Hence, for every $i\in \supp(x^{(t)})$, $s_i^{(t)} x_i^{(t)} \le \lambda = -\pr{\tilde{c}^{(k)}}{g^{(t)}}$, where the equality is due to strong duality.  
   It follows that $\pr{s^{(t)}}{x^{(t)}} \le  -n\pr{\tilde{c}^{(k)}}{g^{(t)}} = 0$.
Since $\tilde{s}^{(t)}, x^{(t+1)}\geq \0$ , we have
\[ 0 \leq \pr{\tilde{s}^{(t)}}{x^{(t+1)}} \leq \pr{s^{(t)}}{x^{(t+1)}} = \pr{s^{(t)}}{x^{(t)}} \leq 0 .\]
Thus, the algorithm terminates in the next iteration.
\end{proof}

The next two claims provide some basic properties of the modified cost $\tilde{c}^{(k)}$.
For convenience, we define $\tilde{c}^{(0)} \coloneqq c$.

\begin{claim}\label{clm:cost-perturb}
For every phase $k\geq 0$, we have $\tilde{c}^{(k)}\in \im(A^{\top})+c-r$ for some $\0\leq r\leq k\delta\1$.
\end{claim}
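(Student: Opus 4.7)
I will prove the claim by induction on the phase number $k \geq 1$. Two observations will be used repeatedly. First, any dual slack produced by $\textsc{Ratio-Circuit}(A, \tilde c, w)$ lies in $\im(A^\top) + \tilde c$ and is nonnegative, since it is feasible to \eqref{sys:minratio-dual} and therefore has the form $\tilde c + A^\top y$ with $s \geq \0$. Second, truncating a nonnegative vector by zeroing its entries below $\delta$ changes the vector coordinate-wise by a nonnegative amount of at most $\delta$.

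For the base case $k = 1$, the opening call $\textsc{Ratio-Circuit}(A, c, \1)$ returns $\tilde s^{(-1)} \in \im(A^\top) + c$ with $\tilde s^{(-1)} \geq \0$. Since $\tilde c^{(1)}$ is obtained by zeroing the entries of $\tilde s^{(-1)}$ below $\delta$, the difference $r^{(1)} := \tilde s^{(-1)} - \tilde c^{(1)}$ satisfies $\0 \leq r^{(1)} \leq \delta \1$, giving $\tilde c^{(1)} \in \im(A^\top) + c - r^{(1)}$. For the inductive step, assume $\tilde c^{(k)} \in \im(A^\top) + c - r$ with $\0 \leq r \leq k\delta \1$, and let $\tau$ denote the first iteration of phase $k+1$. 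Then $\tilde c^{(k+1)}$ is the truncation of $\tilde s^{(\tau-1)}$ below $\delta$. By the update rule $\tilde s^{(\tau-1)} \gets \argmin_{s \in \{\tilde c^{(k)}, s^{(\tau-1)}\}} \pr{s}{x^{(\tau)}}$ executed at the final \textsc{Ratio-Circuit} iteration of phase $k$, we have $\tilde s^{(\tau-1)} \in \{\tilde c^{(k)}, s^{(\tau-1)}\}$, where $s^{(\tau-1)} = \tilde c^{(k)} + A^\top y^{(\tau-1)} \in \im(A^\top) + \tilde c^{(k)}$ by the first observation. In either case, $\tilde s^{(\tau-1)} \in \im(A^\top) + c - r$ (using the inductive hypothesis) and $\tilde s^{(\tau-1)} \geq \0$. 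The truncation step then contributes a further perturbation $r'$ with $\0 \leq r' \leq \delta \1$, so $\tilde c^{(k+1)} \in \im(A^\top) + c - (r + r')$ with $\0 \leq r + r' \leq (k+1)\delta \1$, completing the induction.

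The entire argument is essentially bookkeeping; no step is substantively difficult. The only mild caveat is verifying that $\tilde s^{(\tau-1)}$ is indeed set by the $\argmin$ update at a \textsc{Ratio-Circuit} iteration of phase $k$ rather than being inherited unchanged through a \textsc{Support-Circuit} block (which does not modify $\tilde s$). This holds because every phase contains at least one full outer-loop iteration ending with the $\tilde s$-update, and the index $\tau - 1$ corresponds to such an iteration whenever phase $k+1$ opens at $\tau$.
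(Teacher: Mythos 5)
Your proof is correct and follows essentially the same inductive argument as the paper's. The only cosmetic difference is that the paper sets up the induction with the convention $\tilde c^{(0)} := c$ so the base case $k=0$ is trivial, whereas you handle $k=1$ directly using the opening $\textsc{Ratio-Circuit}(A,c,\1)$ call; both are fine, and your closing remark about the indexing of $\tilde s^{(\tau-1)}$ at a phase boundary is an accurate reading of the algorithm's control flow.
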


\begin{proof}
We proceed by induction on $k$.
The base case $k=0$ is trivial.
Next, suppose that the statement holds for $k$, and consider the inductive step $k+1$.
Let $t$ be the first iteration of phase $k+1$, i.e., $\tilde{c}^{(k+1)}_i = \tilde{s}^{(t-1)}_i$ if $i\in S_{t}$, and $\tilde{c}^{(k+1)}_i = 0$ otherwise.
Note that $\tilde{s}^{(t-1)}\in \{\tilde{c}^{(k)},s^{(t-1)}\}$.
Since both of them are feasible to the dual of $\LP(\tilde{c}^{(k)})$, we have $\tilde{s}^{(t-1)} \in \im(A^{\top}) + \tilde{c}^{(k)}$.
By the inductive hypothesis, $\tilde{c}^{(k)} \in \im(A^{\top}) + c-r$ for some $\0\leq r\leq k\delta\1$.
Hence, from the definition of $\tilde{c}^{(k+1)}$, we have $\tilde{c}^{(k+1)}\in \im(A^{\top})+c-r-q$ for some $\0\leq q\leq \delta\1$ as required.
\end{proof}

\begin{claim}\label{clm:cost-norm}
For every phase $k\geq 0$, we have $\|\tilde{c}^{(k)}\|_\infty\leq 3\sqrt{n}\kappa_A$.
\end{claim}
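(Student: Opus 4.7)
I would prove the claim by induction on the phase index $k$. Throughout, $c$ denotes the cost vector after the projection/normalisation performed at the start of \Cref{alg:circ_aug_opt}, so $c \in \ker(A)$ and $\|c\|_2 = 1$. The key structural fact is \Cref{clm:cost-perturb}: $\tilde c^{(k)} \in c - r + \im(A^\top)$ for some $\0 \leq r \leq k\delta\1$, combined with $\kappa_{\im(A^\top)} = \kappa_A$ from \Cref{lem:duality}.

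\textbf{Base case ($k=1$).} The vector $\tilde c^{(1)}$ is obtained by zeroing entries of $\tilde s^{(-1)}$ below $\delta$, so $\|\tilde c^{(1)}\|_\infty \leq \|\tilde s^{(-1)}\|_\infty$. In either way $\tilde s^{(-1)}$ is produced, it lies in $c + \im(A^\top)$ and is a basic feasible dual solution, so there exists a basis $B$ of $A$ with $\tilde s^{(-1)}_B = \0$. Applying \Cref{lem:circuit-N-kappa} to $\tilde s^{(-1)} - c \in \im(A^\top)$ with $N = B$ (using $\kappa_{\im(A^\top)} = \kappa_A$) yields
\[
  \|\tilde s^{(-1)} - c\|_\infty \;\leq\; \kappa_A \|c_B\|_1 \;\leq\; \kappa_A \sqrt{m}\,\|c\|_2 \;=\; \sqrt{m}\,\kappa_A\,,
\]
whence $\|\tilde s^{(-1)}\|_\infty \leq \sqrt{m}\,\kappa_A + \|c\|_\infty \leq \sqrt{m}\,\kappa_A + 1 \leq 2\sqrt{n}\,\kappa_A$.

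\textbf{Inductive step.} Let $t$ be the first iteration of phase $k+1$. Then $\tilde c^{(k+1)}$ is the truncation of $\tilde s^{(t-1)} \in \{\tilde c^{(k)},\, s^{(t-1)}\}$. If $\tilde s^{(t-1)} = \tilde c^{(k)}$, the bound is immediate from the inductive hypothesis. Otherwise, $\tilde s^{(t-1)} = s^{(t-1)}$ is the basic optimal dual returned by \textsc{Ratio-Circuit}$(A, \tilde c^{(k)}, 1/x^{(t-1)})$, and I must bound $\|s^{(t-1)}\|_\infty$ directly.

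At the basic vertex, let $B_0,\, B_1$ be the tight lower/upper index sets; non-trivially $|B_0|+|B_1| = m+1$ (else $\lambda = 0$ and $s^{(t-1)} = \0$). Via Cramer's rule applied to the linear system defining $(y,\lambda)$, the value of $\lambda$ is a ratio of subdeterminants of $A$ supported on the circuit $K = B_0 \cup B_1$; equivalently, its numerator equals (up to signs) $\langle \tilde c^{(k)}, \gamma\rangle$ where $\gamma \in \ker(A)$ is the elementary vector supported on $K$, so $|\gamma_i/\gamma_j| \leq \kappa_A$. The crucial orthogonality $\ker(A) \perp \im(A^\top)$ combined with \Cref{clm:cost-perturb} gives
\[
 \langle \tilde c^{(k)}, \gamma\rangle \;=\; \langle c, \gamma \rangle - \langle r, \gamma\rangle\,,
\]
so by Cauchy--Schwarz and the tiny size of $\delta$ we bound $|\langle \tilde c^{(k)}, \gamma\rangle| \leq (1 + o(1))\sqrt{m+1}\,\|\gamma\|_\infty$. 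Together with $\sum_{j\in B_1} w_j|\gamma_j| \geq (\|\gamma\|_\infty/\kappa_A)\sum_{j\in B_1}w_j$, this yields the clean bound $\sum_{j\in B_1} s^{(t-1)}_j = \lambda \sum_{j\in B_1}w_j \leq 2\sqrt{m+1}\,\kappa_A$, hence $\|s^{(t-1)}|_{B_1}\|_\infty \leq 2\sqrt{m+1}\,\kappa_A$. For coordinates in $B_0$ we have $s^{(t-1)}_i = 0$, and for $i \notin K$ we apply \Cref{lem:circuit-N-kappa} once more, this time to $s^{(t-1)} - \tilde c^{(k)} \in \im(A^\top)$ with a basis $N \subseteq K$ (which exists since $K$ spans $A$ in its matroid), using the already-established bound on $\|s^{(t-1)}|_{B_1}\|_1$ to conclude $\|s^{(t-1)}\|_\infty \leq 2\sqrt{n}\,\kappa_A$.

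\textbf{Main obstacle.} The hard part is the Case~B bound on $\|s^{(t-1)}\|_\infty$ when $|B_1|\geq 2$: here $s^{(t-1)}$ is not itself a basic dual to $\LP(c)$, the upper bounds $s_i \leq \lambda w_i$ with $w_i = 1/x^{(t-1)}_i$ could naively be enormous, and one must leverage the orthogonality $c \perp \im(A^\top)$ through Cramer's rule to show that $\lambda \sum_{B_1} w_j$ — not $\lambda$ alone — is what is actually controlled by $\sqrt{m+1}\,\kappa_A$. The remaining task of extending the bound from $K$ to the full vector via \Cref{lem:circuit-N-kappa} is routine once this is in hand.
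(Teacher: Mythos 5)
Your proposal takes a genuinely different and substantially more complicated route than the paper, and it has a gap that I do not think is repairable within your framework.

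The paper's proof is much more direct: after establishing (as you did) that $c - r$ lies in $\im(A^\top) + \tilde c^{(k)}$ for some $\0 \le r \le k\delta\1$ via \Cref{clm:cost-perturb}, it applies \Cref{cor:basic-norm-bound} \emph{once}, in the dual space, with $s^{(t-1)}$ playing the role of the basic solution and $c-r$ playing the role of $z$. Concretely, taking $B$ with $\ker(B) = \im(A^\top)$, both $s^{(t-1)}$ and $c-r$ satisfy $Bs = B\tilde c^{(k)}$, and $s^{(t-1)}$ is basic, so $\|s^{(t-1)}\|_\infty \le \kappa_B\|c-r\|_1 = \kappa_A\|c-r\|_1 \le \kappa_A(\sqrt{n} + n(m+1)\delta) \le 2\sqrt{n}\kappa_A$. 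The ``induction'' on $k$ in the paper is essentially a formal shell: the hard case (Case~2, $\tilde s^{(t-1)} = s^{(t-1)}$) does not use the inductive hypothesis at all, only \Cref{clm:cost-perturb}. Your base case is correct but unnecessarily elaborate; the paper starts the induction at $k=0$ with $\tilde c^{(0)} := c$ and $\|c\|_\infty \le \|c\|_2 = 1$.

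The genuine gap is in your final ``routine'' step, not in the Cramer's-rule analysis you flag as the obstacle. Even granting the bound $\|s^{(t-1)}_{B_1}\|_1 \le 2\sqrt{m+1}\kappa_A$, applying \Cref{lem:circuit-N-kappa} to $z := s^{(t-1)} - \tilde c^{(k)} \in \im(A^\top)$ with a basis $N \subseteq K$ yields $\|z\|_\infty \le \kappa_A\|z_N\|_1$, and you must then add back $\|\tilde c^{(k)}\|_\infty$, which by the inductive hypothesis is already the full $2\sqrt{n}\kappa_A$. So the best you could hope to conclude is $\|s^{(t-1)}\|_\infty \le 2\sqrt{n}\kappa_A + (\text{something positive})$, which does not close the induction. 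If instead you try to use $c-r$ as the reference (so $z = s^{(t-1)} - (c-r) \in \im(A^\top)$), then $\|z_N\|_1 \le \|s^{(t-1)}_N\|_1 + \|(c-r)_N\|_1 = O(\sqrt{n}\kappa_A)$ already carries a factor $\kappa_A$ from your $B_1$ bound, and the second application of \Cref{lem:circuit-N-kappa} multiplies by another $\kappa_A$, giving $O(\sqrt{n}\kappa_A^2)$ — off by a factor of $\kappa_A$. The two-step structure of your argument (first bound $s$ on $K$ with one $\kappa_A$ loss, then extend off $K$ with another) inherently costs $\kappa_A^2$; the paper avoids this by bounding $\|s^{(t-1)}\|_\infty$ against $\|c-r\|_1$ in a single application of the proximity bound. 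Separately, your claim that $K = B_0 \cup B_1$ spans $A$ (needed to pick a basis $N\subseteq K$) is not justified when the optimal circuit has fewer than $m+1$ elements, and the Cramer's-rule identification of $\lambda$ with a single elementary vector needs work in degenerate cases — but these are secondary to the $\kappa_A^2$ issue.
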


\begin{proof}
We proceed by induction on $k$.
The base case $k=0$ is easy because $\|c\|_\infty \leq \|c\|_2 = 1$.
Next, suppose that the statement holds for $k$, and consider the inductive step $k+1$.
Let $t$ be the first iteration of phase $k+1$.
If $\tilde{s}^{(t-1)} = \tilde{c}^{(k)}$, then $\tilde{c}^{(k+1)}$ is obtained from $\tilde{c}^{(k)}$ by setting some coordinates to 0,
so we are done by the inductive hypothesis.
Otherwise, $\tilde{s}^{(t-1)} = s^{(t-1)}$.
We know that $s^{(t-1)}$ is an optimal solution to \eqref{sys:minratio-dual} for {\sc Ratio-Circuit}$(A, \tilde{c}^{(k)}, 1/x^{(t-1)})$.
Since $c-r \in \im(A^{\top}) + \tilde{c}^{(k)}$ for some $\0 \leq r\leq k\delta\1$ by Claim \ref{clm:cost-perturb}, $s^{(t-1)}$ is also an optimal solution to \eqref{sys:minratio-dual} for {\sc Ratio-Circuit}$(A, c-r, 1/x^{(t-1)})$.
By \eqref{eq:minratio_dual_prox}, we obtain
\begin{align*}
  \|s^{(t-1)}\|_\infty \leq 2\kappa_A \|c-r\|_1 &\le 2\kappa_A(\|c\|_1 + \|r\|_1) \\
  &\le 2\kappa_A\big(\sqrt{n} + nk\delta\big) \le 2\kappa_A\big(\sqrt{n} + n(m+1)\delta\big) \le  3\sqrt{n}\kappa_A.
\end{align*}
The third inequality is due to $\|c\|_2 = 1$, the fourth inequality follows from the fact that there are at most $m+1$ phases, and the last inequality follows from the definition of $\delta$.
\end{proof}

We next show a primal proximity lemma that holds for iterates throughout the algorithm.

\begin{lemma}\label{lem:movement}
Let $t$ be the first iteration of a phase $k\geq 1$. For any iteration $r\geq t$,
\begin{equation}
  \|x^{(r+1)} - x^{(r)}\|_\infty \le \frac{3\sqrt{n}\kappa^2_A}{\delta} \|x^{(t)}_{S_t}\|_1\, .
\end{equation}
\end{lemma}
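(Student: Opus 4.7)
The plan is to establish a uniform per-step bound $\|x^{(s+1)}-x^{(s)}\|_\infty \le \frac{2\sqrt n \kappa_A^2}{\delta}\|x^{(t)}_{S_t}\|_1$ valid for every iteration $s\in\{t,\ldots,t'-1\}$, and then conclude by summing the $t'-t$ steps via the triangle inequality. Writing $\Delta^{(s)} := x^{(s+1)}-x^{(s)} = \alpha^{(s)} g^{(s)}$ with $g^{(s)}$ an elementary vector of $\ker(A)$, maximality of the stepsize (from $x^{(s+1)}\ge \0$) forces $\alpha^{(s)}|g^{(s)}_i|\le x^{(s)}_i$ whenever $g^{(s)}_i<0$, and the imbalance bound $\|g^{(s)}\|_\infty\le \kappa_A|g^{(s)}_i|$ then gives
\[
\|\Delta^{(s)}\|_\infty \le \kappa_A\,x^{(s)}_i \qquad \text{for every } i\in\supp((g^{(s)})^-).
\]
It therefore suffices to exhibit some $i\in\supp((g^{(s)})^-)$ with $x^{(s)}_i\le \tfrac{2\sqrt n\kappa_A}{\delta}\|x^{(t)}_{S_t}\|_1$.

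To produce such an $i$, I would use the dual slack $\tilde s^{(t-1)}$. The key ingredient is the (approximate) monotonicity $\pr{\tilde s^{(t-1)}}{x^{(s)}}\le \pr{\tilde s^{(t-1)}}{x^{(t)}}$ along the trajectory: since $\tilde s^{(t-1)}-\tilde c^{(k_s)}\in\im(A^\top)$ (up to small between-phase perturbations tracked by Claim~\ref{clm:cost-perturb}) and $A\Delta^{(s)}=\0$, we obtain $\pr{\tilde s^{(t-1)}}{\Delta^{(s)}}=\pr{\tilde c^{(k_s)}}{\Delta^{(s)}}\le 0$ because $g^{(s)}$ is a descent direction for the current phase cost. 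The definition of $S_t$ gives $\tilde s^{(t-1)}_j\ge\delta$ for every $j\in S_t$, so
\[
x^{(s)}_j \;\le\; \frac{\pr{\tilde s^{(t-1)}}{x^{(t)}}}{\delta} \qquad \text{for every } j\in S_t,
\]
and combining the norm estimate $\|\tilde s^{(t-1)}\|_\infty\le 2\sqrt n\kappa_A$ from Claim~\ref{clm:cost-norm} with the $\arg\min$ rule defining $\tilde s^{(t-1)}$ (which concentrates its large entries on $S_t$) yields $\pr{\tilde s^{(t-1)}}{x^{(t)}}\le 2\sqrt n\kappa_A\,\|x^{(t)}_{S_t}\|_1$, completing the desired bound on $x^{(s)}_j$ for all $j\in S_t$.

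The main obstacle is guaranteeing that $\supp((g^{(s)})^-)\cap S_t$ is nonempty, so that the small-$x^{(s)}$ estimate just derived can be plugged into the imbalance inequality. For \textsc{Ratio-Circuit} steps, complementary slackness for \eqref{sys:minratio} gives $s^{(s)}_i x^{(s)}_i=\lambda^{(s)}$ on $\supp((g^{(s)})^-)$, so the indices in this set carrying the largest dual slacks correspond to the smallest primal values; tying these large-slack coordinates back into $S_t$ through the $\arg\min$ rule defining $\tilde s^{(s)}$ should produce the required intersection. For \textsc{Support-Circuit} steps, the explicit call forces $\supp(g^{(s)})\cap\supp(\tilde c^{(k)})\ne\emptyset$ with $\supp(\tilde c^{(k)})\subseteq S_t$ by construction, and since $g^{(s)}$ has both signs on its support one can extract a negative coordinate therein. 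Summing the per-step bound over $s\in\{t,\ldots,t'-1\}$ then concludes the proof.
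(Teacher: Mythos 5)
Your opening step is correct and matches the paper: from the maximal augmentation $x^{(s+1)}=x^{(s)}+\alpha^{(s)}g^{(s)}\ge \0$ you get $\alpha^{(s)}|g^{(s)}_i|\le x^{(s)}_i$ for every $i$ with $g^{(s)}_i<0$, and the imbalance bound then gives $\|x^{(s+1)}-x^{(s)}\|_\infty\le\kappa_A\,x^{(s)}_i$. The disagreement starts with \emph{which} coordinate $i$ you try to use, and this is where the argument breaks.

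You aim to find $i\in\supp((g^{(s)})^-)\cap S_t$, tying everything back to the fixed slack $\tilde s^{(t-1)}$ and the fixed index set $S_t$. This cannot be guaranteed once $s$ moves into a later phase $\ell>k$: both the \textsc{Support-Circuit} calls and the \textsc{Ratio-Circuit} calls in phase $\ell$ are driven by the \emph{current} modified cost $\tilde c^{(\ell)}$, so the only negative coordinate of $g^{(s)}$ you can actually produce lies in $\supp(\tilde c^{(\ell)})$, which is $S_{r_\ell}$ for the start $r_\ell$ of phase $\ell$ -- a set that may be disjoint from $S_t$. Your proposal explicitly flags this as ``the main obstacle,'' but the patch you sketch (complementary slackness plus the $\argmin$ rule for \textsc{Ratio-Circuit}, and $\supp(\tilde c^{(k)})\subseteq S_t$ for \textsc{Support-Circuit}) only addresses steps inside phase $k$; it does not produce an index in $S_t$ for steps in later phases. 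There is a second, smaller issue: the estimate $\pr{\tilde s^{(t-1)}}{x^{(t)}}\le 2\sqrt n\kappa_A\|x^{(t)}_{S_t}\|_1$ silently drops $\sum_{j\notin S_t}\tilde s^{(t-1)}_j x^{(t)}_j$; although each slack there is $<\delta$, the primal mass off $S_t$ is not controlled. This is precisely why the paper truncates to $\tilde c^{(k)}$, which is \emph{identically zero} off $S_t$, so the analogous inner product has no off-$S_t$ contribution. Finally, the ``approximate monotonicity'' $\pr{\tilde s^{(t-1)}}{x^{(s)}}\lesssim\pr{\tilde s^{(t-1)}}{x^{(t)}}$ across phase boundaries carries an error of order $(m+1)\delta\|\Delta^{(s)}\|_1$ per step via Claim~\ref{clm:cost-perturb}, and you have not shown this can be absorbed.

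The paper's proof sidesteps all three issues by never trying to land a negative coordinate of $g^{(r)}$ in $S_t$. Instead, for each step $r$ in phase $\ell$, it shows $g^{(r)}_i<0$ for some $i\in\supp(\tilde c^{(\ell)})$ (trivial for \textsc{Support-Circuit}; for \textsc{Ratio-Circuit} this follows from $\pr{\tilde c^{(\ell)}}{g^{(r)}}<0$, which holds by Claim~\ref{clm:terminate} when the algorithm does not terminate), then bounds $x^{(r)}_i\le\frac1\delta\pr{\tilde c^{(\ell)}}{x^{(r)}}$ and bridges to the anchor iterate via the \emph{exact} cross-phase monotonicity $\pr{\tilde c^{(\ell)}}{x^{(r)}}\le\pr{\tilde c^{(k)}}{x^{(t)}}$ of Claim~\ref{clm:obj-monotone}, finishing with $\pr{\tilde c^{(k)}}{x^{(t)}}\le\|\tilde c^{(k)}\|_\infty\|x^{(t)}_{S_t}\|_1$ from Claim~\ref{clm:cost-norm}. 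If you replace your fixed-slack strategy with this current-phase-cost-plus-Claim~\ref{clm:obj-monotone} chain, the per-step bound follows and the triangle-inequality summation you already have closes the proof.
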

\begin{proof}
  Fix an iteration $r\geq t$ and let $\ell\geq k$ be the phase in which iteration $r$ occurred.
  Consider the elementary vector $g^{(r)}$.
  If it is returned by {\sc Support-Circuit}, then $g^{(r)}_i<0$ for some $i\in \supp(\tilde{c}^{(\ell)})$ by definition.
  If it is returned by {\sc Ratio-Circuit}, we also have $g^{(r)}_i<0$ for some $i\in \supp(\tilde{c}^{(\ell)})$ unless $\langle \tilde{c}^{(\ell)}, g^{(r)} \rangle = 0$.
  Note that if $\langle \tilde{c}^{(\ell)}, g^{(r)} \rangle = 0$, then the algorithm sets $x^{(r+1)} = x^{(r)}$, which makes the lemma trivially true.
  Hence, we may assume that such an iteration does not occur. 
  
  By construction, we have $x^{(r+1)} - x^{(r)}=\alpha g^{(r)}$ for some $\alpha>0$, and $\alpha |g_i^{(r)}|\le x^{(r)}_i$. Applying the definition of $\kappa_A$ yields
  $$\|x^{(r+1)} - x^{(r)}\|_\infty \leq \kappa_A x^{(r)}_i \leq \frac{\kappa_A}{\delta} \pr{\tilde{c}^{(\ell)}}{x^{(r)}} \leq \frac{\kappa_A}{\delta}\pr{\tilde{c}^{(k)}}{x^{(t)}}  \leq \frac{3\sqrt{n}\kappa^2_A}{\delta} \|x^{(t)}_{S_t}\|_1.$$
  The second inequality uses that all nonzero coordinates of $\tilde{c}^{(\ell)}$ are at least $\delta$.
  The third inequality is by \Cref{clm:obj-monotone}, whereas the fourth inequality is by \Cref{clm:cost-norm} and $\supp(\tilde{c}^{(k)}) = S_t$.
\end{proof}

With the above lemma, we show that any variable which enters $\cL_t$ \emph{at the start of a phase}, is lower bounded by $\poly(n,\kappa_A)\|x^{(t)}_{S_t}\|_1$ in the next $\Theta(mT)$ iterations.

\begin{lemma}\label{clm:stay-positive}
Let $t$ be the first iteration of a phase $k\geq 1$ and let $i\in \mathcal{L}_t\setminus \mathcal{L}_{t-1}$. 
For any iteration $t\leq t'\leq t+2(m+1)T$, %
\[x^{(t')}_i \geq \frac{6\sqrt{n}\kappa^2_A}{\delta}\|x^{(t)}_{S_t}\|_1.\]
\end{lemma}
\begin{proof}
By definition, we have that $x_i^{(t)} \ge \Gamma \|x_{S_t}^{(t)}\|_1$. With \Cref{lem:movement} we get
\begin{equation*}
\begin{aligned}
  x^{(t')}_i \geq x^{(t)}_i - \|x^{(t')} - x^{(t)}\|_\infty &\geq x^{(t)}_i - \sum_{r=t}^{t'-1}\|x^{(r+1)} - x^{(r)}\|_\infty \\
  &\geq \left(\Gamma - \frac{6(m+1)\sqrt{n}\kappa^2_AT}{\delta}\right) \|x^{(t)}_{S_t}\|_1 \geq \frac{6\sqrt{n}\kappa^2_AT}{\delta}\|x^{(t)}_{S_t}\|_1\, .
\end{aligned}
\end{equation*}
The lower bound follows from $T\geq 1$, as long as the constant in the definition of $T$ is chosen large enough.
\end{proof}

For any iteration $t$ in phase $k\ge 1$, let us define
\begin{equation}\label{eq:D-t-def}
D_t:=\bigcup\left\{\cL_{t'}\setminus \cL_{t'-1}\, :\, t' \text{ is the first iteration of phase }k'=1,2,\ldots,k\right\}\, .
\end{equation}
These are the variables which entered $\cL_{t'}$ at the start of a phase for all $t'\leq t$.  
Note that $\rk(D_t)=\rk(\cL_t)$ holds. 
As a consequence of \Cref{clm:stay-positive}, $D_t$ remains disjoint from the support of the modified cost $\tilde{c}^{(k)}$.

\bnote[inline]{BN: Why does what holds for $D_t$ not hold for all the variables in $\mathcal{L}_t$?} 
\bnote[inline]{Mhm, but you could rewrite Lemma 7.6 such that it holds for all guys in $\mathcal{L}_t$. Although, this would probably change a few constants in the proofs. More generally, I feel that there is no need to even define $D_t$, but lets keep it as it would propagate throughout the section. Not worth the effort.}
\bnote[inline]{Ok, not clear how that would be fixed. Can keep it as is.}

\bnote[inline]{Why is the Assumptionin $\langle \tilde{c}^{(k)}, x^{(t)}\rangle>0$ necessary? Shouldn't \Cref{cor:disjoint} hold nonetheless? This assumption also leads to a further case distinction in the main proof of \Cref{thm:opt-augment}.}
\bnote[inline]{I believe it would be cleaner to define $\cL_t\gets \cL_{t-1}\cup \{i\in [n]:\, x^{(t)}_i {\color{red}>} \Gamma \|x_{S_t}^{(t)}\|_1\}$ instead of $\cL_t\gets \cL_{t-1}\cup \{i\in [n]:\, x^{(t)}_i \ge \Gamma \|x_{S_t}^{(t)}\|_1\}$. This would ensure that the variables in $\cL$ are actually positive and would also fix the issue with the assumption in the lemma below, no?}
\bnote[inline]{Ok, let us keep it as is.}

\begin{lemma}\label{cor:disjoint}
Let $0\leq t\leq 2(m+1)T$ be an iteration and let $k\geq 1$ be the phase in which iteration $t$ occured. Let $D_t\subseteq \cL_t$ be defined as in \eqref{eq:D-t-def}.  If $\langle \tilde{c}^{(k)}, x^{(t)} \rangle > 0$, then
\[D_t\cap \supp(\tilde{c}^{(k)}) = \emptyset\, .\]
\end{lemma}

\begin{proof}
For the purpose of contradiction, suppose that there exists an index $i\in D_t\cap \supp(\tilde{c}^{(k)})$. %
Let $r\leq t$ be the iteration in which $i$ was added to $\cL_r$.
By our choice of $D_t$, $r$ is the first iteration of phase $j$ for some $j\leq k$, which implies that $S_r = \supp(\tilde{c}^{(j)})$.
Since $\langle \tilde{c}^{(j)}, x^{(r)} \rangle \geq \langle \tilde{c}^{(k)}, x^{(t)} \rangle >0$ by \Cref{clm:obj-monotone}, we have $\|x^{(r)}_{S_r}\|_1 > 0$.
However, we get the following contradiction
$$ 6\sqrt{n}\kappa^2_A\|x^{(r)}_{S_r}\|_1 \leq \delta x^{(t)}_i\leq \pr{\tilde{c}^{(k)}}{x^{(t)}} \leq \pr{\tilde{c}^{(j)}}{x^{(r)}} \leq 3\sqrt{n}\kappa_A \|x^{(r)}_{S_r}\|_1.$$
The first inequality is by \Cref{clm:stay-positive}, the third inequality is by \Cref{clm:obj-monotone}, while the fourth inequality is by \Cref{clm:cost-norm}.
\end{proof}

The following lemma shows that {\sc Ratio-Circuit} geometrically decreases the norm $\|x^{(t)}_{S_t}\|_1$.

\begin{lemma}\label{clm:norm-decay}
Let $t$ be the first {\sc Ratio-Circuit} iteration in phase $k\geq 1$.
After $\numit \in \mathbb{N}$ consecutive {\sc Ratio-Circuit} iterations in phase $k$, 
\[\|x_{S_{t+\numit}}^{(t+\numit)}\|_1\le \frac{3n^{1.5}\kappa_A}\delta\left(1-\frac1n\right)^{\numit-1} \|x_{\supp(\tilde{c}^{(k)})}^{(t)}\|_1,\]
\end{lemma}
\begin{proof}
  \bnote[inline]{Can't you just pull \Cref{clm:obj-monotone} from line line 2 to line 5? i.e., for the inequality $\langle s^{(t+p-1)}, x^{(t+p)} \rangle \le \langle s^{(t+p-1)}, x^{(t+p-1)} \rangle$. }
\bnote[inline]{Ok.}
\begingroup
\allowdisplaybreaks
\begin{align*}
  \|x^{(t+\numit)}_{S_{t+\numit}}\|_1 &\leq \frac{1}{\delta} \pr{\tilde{s}^{(t+\numit-1)}}{x^{(t+\numit)}} \tag{as $\tilde{s}^{(t+\numit-1)}_i \geq \delta$ for all $i\in S_{t+\numit}$} \\
  &\leq \frac{1}{\delta}\pr{s^{(t+\numit-1)}}{x^{(t+\numit)}} \tag{from the definition of $\tilde{s}^{(t+\numit-1)}$}\\
  &= \frac{1}{\delta}\pr{s^{(t+\numit-1)}}{x^{(t+\numit-1)}+ \alpha g^{(t+\numit-1)}} \tag{for some augmentation step size $\alpha$}\\
  &= \frac{1}{\delta} \left(\pr{s^{(t+\numit-1)}}{x^{(t+\numit-1)}} + \alpha \pr{\tilde{c}^{(k)}}{g^{(t+\numit-1)}} \right) \tag{as $s^{(t+\numit-1)}\in \im(A^\top) + \tilde{c}^{(k)}$} \\
  &\leq \frac{1}{\delta}\pr{s^{(t+\numit-1)}}{x^{(t+\numit-1)}} \tag{because $\pr{\tilde{c}^{(k)}}{g^{(t+\numit-1)}} \leq 0$} \\
  &\leq  -\frac{n}\delta \pr{\tilde{c}^{(k)}}{g^{(t+\numit-1)}} \tag{$s^{(t+p-1)}_i \leq -\pr{\tilde{c}^{(k)}}{g^{(t+p-1)}}/x^{(t+p-1)}_i$ by \eqref{sys:minratio-dual}}\\
  &\leq \frac{n}\delta \left(\pr{\tilde{c}^{(k)}}{x^{(t+\numit-1)}} - \OPT(\tilde{c}^{(k)})\right) \tag{by step size $\alpha \geq 1$ in \Cref{lem:progress}}\\
  &\leq \frac{n}\delta\left(1-\frac1n\right)^{\numit-1}\left(\pr{\tilde{c}^{(k)}}{x^{(t)}}- \OPT(\tilde{c}^{(k)})\right) \tag{by geometric decay in \Cref{lem:progress}}\\
  &\leq \frac{n}\delta\left(1-\frac1n\right)^{\numit-1} \pr{\tilde{c}^{(k)}}{x^{(t)}} \tag{because $\tilde{c}^{(k)}\geq \0$}\\
  &\leq \frac{3n^{1.5}\kappa_A}\delta\left(1-\frac1n\right)^{\numit-1} \|x^{(t)}_{\supp(\tilde{c}^{(k)})}\|_1 \tag{by \Cref{clm:cost-norm}}.
  \end{align*}
\endgroup
\end{proof}

Recall \Cref{lem:big-slack} which guarantees the existence of a coordinate with large dual slack.
It explains why we chose to work with a projected and normalized cost vector in Algorithm \ref{alg:circ_aug_opt}.
We are now ready to prove the main result of this section.

\begin{proof}[Proof of Theorem~\ref{thm:opt-augment}]
We first prove the correctness of \Cref{alg:circ_aug_opt}.
Suppose that the algorithm terminates in iteration $t$.
We may assume that there is at least 1 phase, as otherwise $x^{(0)}$ is an optimal solution to \eqref{sys:lp}.
Let $k\geq 1$ be the phase in which iteration $t$ occurred.
Since $\pr{\tilde{s}^{(t-1)}}{x^{(t)}} = 0$ and $x^{(t)},\tilde{s}^{(t-1)}$ are primal-dual feasible solutions to $\LP(\tilde{c}^{(k)})$, they are also optimal.
By Claim \ref{clm:cost-perturb}, we know that $\tilde{c}^{(k)}\in \im(A^{\top}) + c -r$ for some $\|r\|_\infty\leq (m+1)\delta$.
Hence, $\tilde{s}^{(t-1)}$ is also an optimal dual solution to $\LP(c')$ where $c'\coloneqq c-r$.
Since $c\in \ker(A)$, $\|c\|_2 = 1$, and 
\[\|c-c'\|_\infty\leq (m+1)\delta = \frac{m+1}{2n^{3/2}(m+2)\kappa_A} < \frac{1}{\sqrt{n}(m+2)\kappa_A},\]
where the strict inequality is due to $n\geq m$ and $n>1$, Lemma \ref{lem:big-slack} guarantees the existence of an index $j\in [n]$ such that 
\[\tilde{s}^{(t)}_j > \frac{(m+1)}{\sqrt{n}(m+2)} > (m+1)\kappa_A\|c-c'\|_\infty.\]  %
Thus, the algorithm returns $N\neq \emptyset$.
Moreover, for all $j\in N$, Theorem \ref{thm:fixing} allows us to conclude that $x_j^{(t)} = x^*_j = 0$ for every optimal solution $x^*$ to $\LP(c)$.

Next, we show that if $\rk(\cL_t) = m$ in some phase $k$, then the algorithm will terminate in iteration $r \leq t+ n + 1$.
As long as $\langle \tilde{c}^{(k)}, x^{(t)} \rangle > 0$, we have $D_t\subseteq [n]\setminus \supp(\tilde{c}^{(k)})$ by \Cref{cor:disjoint}.
Moreover, any $i\in \supp(\tilde{c}^{(k)})\cap \supp(x^{(t)})$ induces a circuit in $D_t\cup \{i\}$, so \textsc{Support-Circuit} will be invoked.
Since every call to \textsc{Support-Circuit} reduces $\supp(x^{(t)})$, all the coordinates in $\supp(\tilde{c}^{(k)})$ will be zeroed-out in at most $n$ calls. 
Let $t\leq t'\leq t+n$ be the first iteration when $\langle \tilde{c}^{(k)}, x^{(t')} \rangle = 0$.
Since {\sc Ratio-Circuit} returns $g^{(t')}$ with $\langle \tilde{c}^{(k)}, g^{(t')} \rangle = 0$, the algorithm terminates in the next iteration by \Cref{clm:terminate}.

It is left to bound the number of iterations of \Cref{alg:circ_aug_opt}.
Clearly, there are at most $m+1$ phases.
In every phase, there are at most $n$ \textsc{Support-Circuit} iterations because each call sets a primal variable to 0.
It is left to show that there are at most $T$ \textsc{Ratio-Circuit} iterations in every phase.

Fix a phase $k\geq 1$ and assume that every phase $\ell < k$ consists of at most $T$ many \textsc{Ratio-Circuit} iterations.
Let $t$ be the first iteration in phase $k$.
We may assume that $\rk(\cL_t) < m$, as otherwise there is only one {\sc Ratio-Circuit} iteration in this phase by the previous argument.
Note that this implies $\|x^{(t')}_{S_{t'}}\|_1>0$ for all $t'\leq t$.
Otherwise, $\cL_{t'} = [n]$ and $\rk(\cL_{t'}) = m$, which contradicts $\rk(\cL_{t'}) \leq \rk(\cL_t)$.

Let $r\geq t$ be the first {\sc Ratio-Circuit} iteration in phase $k$. 
Let $D_r\subseteq \cL_r$ be as defined in \eqref{eq:D-t-def}.
By \Cref{clm:stay-positive} and our assumption, we have $x^{(r)}_{D_r} > \0$.
We claim that $D_r\cap \supp(\tilde{c}^{(k)}) = \emptyset$.
This is clearly the case if $\langle \tilde{c}^{(k)}, x^{(r)} \rangle = 0$.
Otherwise, it is given by \Cref{cor:disjoint}.
We also know that there is no circuit in $\supp(x^{(r)})$ which intersects $\supp(\tilde{c}^{(k)})$.
Hence, applying Lemma \ref{lem:circuit-T-indep} with $L=D_r$, $S=\supp(\tilde c^{(k)})$, $x=x^{(r)}$, $z=x^{(r+T)}$ yields
\[
\|x^{(r+T)}_{[n]\setminus \cl(D_r)}\|_\infty \geq \frac{\|x^{(r+T)}_{[n]\setminus \cl(D_r)}\|_1}{n} \ge \frac{\|x^{(r)}_{\supp(\tilde{c}^{(k)})}\|_\infty}{n\kappa_A} \geq \frac{\|x^{(r)}_{\supp(\tilde{c}^{(k)})}\|_1}{n^2\kappa_A}\geq \Gamma{\|x_{S_{r+T}}^{(r+T)}\|_1}\, ,
\]
where the last inequality follows from \Cref{clm:norm-decay} by choosing a sufficiently large constant in the definition of $T$.
Note that $\cl(D_r) = \cl(\cL_r)$ because $D_r$ is a spanning subset of $\cL_r$.
Thus, there exists an index $i\in [n]\setminus \cl(\cL_r)$ which is added to $\cL_{r+T}$, showing that $\rk(\cL_{r+T}) > \rk(\cL_r)$ as required.

Since the main circuit-augmentation algorithm consists of applying Algorithm \ref{alg:circ_aug_opt} at most $n$ times, we obtain the desired bound on the number of iterations.
\end{proof}

\section{Circuits in General Form}\label{sec:gen-circuits}

There are many instances in the literature where circuits are considered outside standard equality form.
For example, \cite{Borgwardt2016-hierarchy,Kafer2019,DKS19} defined circuits for polyhedra in the general form
\begin{equation}\label{eq:Jesus-P}
P=\{x\in\R^n: Ax=b,\, Bx\le d\}\, ,\end{equation}
 where $A\in \R^{m_A\times n}$, $B\in \R^{m_B\times n}$, $b\in \R^{m_A}$, $c\in \R^{m_B}$. 
It implicitly includes polyhedra in inequality form, which were considered by e.g., \cite{Borgwardt2015,Borgwardt2018circuit}.
 For this setup, they define $g\in\R^n$ to be an elementary vector if 
\begin{enumerate}[(i)]
\item $g\in\ker(A)$, and
\item $Bg$ is support minimal in the collection $\{By: y\in \ker(A), y\neq 0\}$.
\end{enumerate}
In the aforementioned works, the authors use the term `circuit' also for elementary vectors.

Let us assume that 
\begin{equation}\label{assumption:fullrank}
\rk\begin{pmatrix}A\\B\end{pmatrix}=n\, .
\end{equation} 
This assumption is needed to ensure that $P$ is pointed; otherwise, there exists a vector $z\in\R^n$, $z\neq 0$ such that $Az=0$, $Bz=0$. Thus, the lineality space of $P$ is nontrivial. Note that the circuit diameter is defined as the maximum length of a circuit walk between two vertices; this implicitly assumes that vertices exists and therefore the lineality space is trivial.

Under this assumption, we show that circuits in the above definition are a special case of our definition in the Introduction, and explain how our results in the standard form are applicable. 
Consider the matrix and vector
\[
M:=\begin{pmatrix}A&&0\\B&& I_{m_B}\end{pmatrix}\, , \, q:=\begin{pmatrix}b\\ d\end{pmatrix}\, ,
\]
and let $\bar W:=\ker(M)\subseteq\R^{n+m_B}$. Let $J$ denote the set of the last $m_B$ indices, and $W:=\pi_J(\bar W)$ denote the coordinate projection to $J$. The assumption \eqref{assumption:fullrank} guarantees that for each $s\in W$, there is a unique $(x,s)\in \bar W$; further, $x\neq 0$ if and only if $s\neq 0$.

Consider the polyhedron
\[
\bar P=\left\{(x,s)\in \R^{n}\times\R^{m_B}:\, M(x,s)=q\, , s\ge 0  \right\}. 
\]
Note that $P$ is the projection of $\bar P$ onto the $x$ variables. 
Let $Q:=\pi_J(\bar P)\subseteq \R^{m_B}$ be the projection of $\bar P$ onto the $s$ variables.
It is easy to verify the following statements.

\begin{lemma}\label{lem:P-Q-1-2-1}
If \eqref{assumption:fullrank} holds, then 
there is an invertible affine one-to-one mapping $\psi$ between $Q$ and $P$, defined by 
\[
M(\psi(s),s)=q\, .
\]
 Further, $g\in\R^n$ is an elementary vector as in \emph{(i),(ii)} above if and only if there exists $h\in\R^{m_B}$ such that $(g,h)\in \bar W$, $h\neq 0$ and $h$ is support minimal.

Given such a pair $(g,h)\in \bar W$ of elementary vectors,
let $s\in Q$ and let $s':=\aug_Q(s,h)$ denote the result of the circuit augmentation starting from $s$.
Then, $\psi(s')=\aug_P(\psi(s),g)$.
\end{lemma}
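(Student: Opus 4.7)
The plan is to verify the three assertions in order, each following from a short linear-algebraic argument made possible by the full-rank assumption \eqref{assumption:fullrank}.

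First I would establish the bijection $\psi$. Given $s \in Q$, definition of $Q$ gives some $(x,s) \in \bar P$, i.e., $Ax = b$ and $Bx = d - s$. The hypothesis that $\begin{pmatrix} A \\ B\end{pmatrix}$ has full column rank forces this $x$ to be unique, so setting $\psi(s) := x$ yields a well-defined affine map satisfying $M(\psi(s),s) = q$. In the reverse direction, $x \in P$ admits $s := d - Bx \ge \0$ giving $(x,s) \in \bar P$ hence $s \in Q$ with $\psi(s) = x$, so $\psi$ is a bijection.

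Next I would verify the elementary-vector correspondence. The decisive observation is that $(g,h) \in \bar W = \ker(M)$ is equivalent to $g \in \ker(A)$ together with $h = -Bg$, so the linear map $g \mapsto (g,-Bg)$ is a bijection between $\ker(A)$ and $\bar W$, and hence $g \mapsto -Bg$ is a bijection between $\ker(A)$ and $W = \pi_J(\bar W)$. Under \eqref{assumption:fullrank}, $h = \0$ forces $g = \0$, so $h\neq\0$ corresponds to $g\neq\0$. Consequently, $h$ is support-minimal in $W \setminus\{\0\}$ if and only if $Bg$ is support-minimal in $\{By : y \in \ker(A),\, y \neq \0\}$, which is exactly condition (ii).

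Finally, for the augmentation step, I would use linearity of $M$ to conclude $M(\psi(s) + \bar\alpha g,\, s + \bar\alpha h) = q$ for all $\bar\alpha \in \R$, so the affine identity $\psi(s + \bar\alpha h) = \psi(s) + \bar\alpha g$ holds on the whole line. The feasibility of $\psi(s) + \bar\alpha g$ in $P$ reduces to $Bx + \bar\alpha Bg \le d$; substituting $Bx = d - s$ and $Bg = -h$ this becomes $s + \bar\alpha h \ge \0$, which is precisely feasibility of $s + \bar\alpha h$ in $Q$. Thus the maximal step sizes for the two augmentations coincide, and applying $\psi$ to $s' = \aug_Q(s,h) = s + \alpha h$ yields $\aug_P(\psi(s),g) = \psi(s) + \alpha g$. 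The only point requiring care is the sign bookkeeping between $h = -Bg$ and $s = d - Bx$, but once recorded the conclusion is immediate; no substantive obstacle arises.
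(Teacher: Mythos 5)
Your proof is correct and is essentially the natural verification that the paper leaves to the reader (it only states ``It is easy to verify the following statements'' without giving an explicit argument). You identify exactly the key facts: the full-rank assumption forces $\ker(A)\cap\ker(B)=\{\0\}$, so $g\mapsto(g,-Bg)$ is a linear bijection $\ker(A)\to\bar W$ and its second component is a bijection onto $W$, which simultaneously gives well-definedness of $\psi$, the elementary-vector correspondence, and the equality of the feasible step-size intervals for the two augmentations.
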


Consequently, the elementary vectors of \eqref{eq:Jesus-P} are in one-to-one mapping to elementary vectors in the subspace $W$ as used in this paper.
This was also independently shown by Borgwardt and Brugger \cite[Lemma 3]{BB22}.
By the last part of the statement,  analyzing circuit walks on $P$ reduces to analyzing circuit walks of $Q$ that is given in the subspace form $Q=\{s\in\R^{m_B}:\, s\in W+r, s\ge 0\}$.

Finally, we can represent $Q$ in standard equality form as follows. Using row operations on $M$, we can create an $n\times n$ identity matrix in the first $n$ columns. Thus, we can construct a representation
$Q=\{s\in\R^{m_B}:\, Hs=f, s\ge 0\}$, where $H\in \R^{(m_A+m_B-n)\times m_B}$, $f\in \R^{m_A+m_B-n}$.
By \Cref{lem:P-Q-1-2-1}, 
\[\kappa_H = \max\left\{\frac{|(Bg)_i|}{|(Bg)_j|}: i,j\in \supp(Bg), g \text{ is an elementary vector of \eqref{eq:Jesus-P}}\right\}.\]

\bibliographystyle{myalpha}
\bibliography{circuit_diameter}
\end{document}